\documentclass[11pt,paper=a4,DIV=12,abstract=true]{scrartcl}

\usepackage[T1]{fontenc}
\usepackage[utf8]{inputenc}
\usepackage{lmodern}
\usepackage{microtype}

\usepackage{amsmath,amssymb,amsthm,mathtools}
\usepackage{bm,dsfont,comment}
\usepackage[english]{babel}
\usepackage[shortlabels]{enumitem}
\DeclareUnicodeCharacter{00A0}{ }

\usepackage{graphicx}

\usepackage{aliascnt}
\usepackage[dvipsnames]{xcolor}

\usepackage[round,authoryear]{natbib}
\usepackage[unicode=true]{hyperref}
\usepackage[nameinlink,noabbrev,capitalize]{cleveref}
\setkomafont{disposition}{\normalfont\bfseries}
\addtokomafont{section}{\large}
\addtokomafont{subsection}{\normalsize}
\setkomafont{title}{\normalfont\bfseries}
\newcommand{\manuscripttitle}[1]{\title{{\Large #1}}}
\setkomafont{author}{\normalfont}
\setkomafont{date}{\normalfont}
\setkomafont{caption}{\small}
\setkomafont{captionlabel}{\small\bfseries}

\numberwithin{equation}{section}

\theoremstyle{plain}
\newtheorem{theorem}{Theorem}[section]

\newaliascnt{proposition}{theorem}
\newtheorem{proposition}[proposition]{Proposition}
\aliascntresetthe{proposition}

\newaliascnt{lemma}{theorem}
\newtheorem{lemma}[lemma]{Lemma}
\aliascntresetthe{lemma}

\newaliascnt{corollary}{theorem}

\aliascntresetthe{corollary}

\theoremstyle{definition}

\newaliascnt{assumption}{theorem}
\newtheorem{assumption}[assumption]{Standing Assumption}
\aliascntresetthe{assumption}

\newaliascnt{definition}{theorem}
\newtheorem{definition}[definition]{Definition}
\aliascntresetthe{definition}

\newaliascnt{example}{theorem}
\newtheorem{example}[example]{Example}
\aliascntresetthe{example}

\theoremstyle{remark}

\newaliascnt{remark}{theorem}
\newtheorem{remark}[remark]{Remark}
\aliascntresetthe{remark}

\crefname{theorem}{Theorem}{Theorems}
\Crefname{theorem}{Theorem}{Theorems}
\crefname{proposition}{Proposition}{Propositions}
\Crefname{proposition}{Proposition}{Propositions}
\crefname{lemma}{Lemma}{Lemmas}
\Crefname{lemma}{Lemma}{Lemmas}
\crefname{corollary}{Corollary}{Corollaries}
\Crefname{corollary}{Corollary}{Corollaries}
\crefname{assumption}{Assumption}{Assumptions}
\Crefname{assumption}{Assumption}{Assumptions}
\crefname{definition}{Definition}{Definitions}
\Crefname{definition}{Definition}{Definitions}
\crefname{example}{Example}{Examples}
\Crefname{example}{Example}{Examples}
\crefname{remark}{Remark}{Remarks}
\Crefname{remark}{Remark}{Remarks}

\newcommand{\na}{\mathbb{N}}
\newcommand{\re}{\mathbb{R}}
\newcommand{\replus}{\re_{+}}
\newcommand{\Fcal}{\mathcal{F}}
\newcommand{\FF}{\mathbb{F}}
\newcommand{\PP}{\mathbb{P}}
\newcommand{\QQ}{\mathbb{Q}}
\newcommand{\B}{\mathcal{B}}
\newcommand{\T}{\mathcal{T}}
\newcommand{\from}{{:}\penalty\binoppenalty\mskip\thickmuskip}
\newcommand{\indicatorset}[1]{\mathds{1}_{#1}}

\makeatletter
\let\old@fnsymbol\@fnsymbol
\renewcommand{\@fnsymbol}[1]{%
  \ifnum#1=2
    \TextOrMath{\textasteriskcentered\textasteriskcentered}{**}%
  \else
    \old@fnsymbol{#1}%
  \fi
}
\makeatother

\makeatletter

\newcommand{\makeoverbar}[7]{%
  \setbox0=\hbox{$\m@th#2\mkern#5mu{{}#3{}}\mkern#6mu$}%
  \setbox1=\null \dimen@=#4\fontdimen8#13 \dimen@=3.5\dimen@
  \advance\dimen@ by \ht0 \dimen@=-#7\dimen@ \advance\dimen@ by \wd0
  \ht1=\ht0 \dp1=\dp0 \wd1=\dimen@
  \dimen@=\fontdimen8#13 \fontdimen8#13=#4\fontdimen8#13
  \rlap{\hbox to \wd0{$\m@th\hss#2{\overline{\box1}}\mkern#5mu$}}
  \fontdimen8#13=\dimen@}
\makeatother

\allowdisplaybreaks
\DeclareMathOperator*{\argmin}{arg\,min}

\newcommand{\keywords}[1]{\par\medskip\noindent\textbf{Keywords.} #1}
\newcommand{\msc}[1]{\par\smallskip\noindent\textbf{MSC 2020.} #1}
\newcommand{\jel}[1]{\par\smallskip\noindent\textbf{JEL classification.} #1}

\hypersetup{%
  hidelinks,
  pdftitle={Algorithmic Trading and Stochastic Integration},
  pdfauthor={Aleksandar Arandjelović and Uwe Schmock},
  pdfkeywords={Neural networks; universal approximation theorems; Orlicz spaces; stochastic integration; semimartingales; mean-variance hedging; equivalent martingale measures}
}

\manuscripttitle{Algorithmic Trading and Stochastic Integration}

\author{%
  Aleksandar Arandjelović\thanks{Department of Mathematics, ETH Zurich, Zurich, Switzerland, and Institute for Statistics and Mathematics, WU Vienna University of Economics and Business, Vienna, Austria. Email: \href{mailto:aleksandar.arandjelovic@wu.ac.at}{aleksandar.arandjelovic@wu.ac.at}. Corresponding author.}
  \and
  Uwe Schmock\thanks{Institute of Statistics and Mathematical Methods in Economics, TU Wien, Vienna, Austria. Email: \href{mailto:uwe.schmock@tuwien.ac.at}{uwe.schmock@tuwien.ac.at}.}
}

\date{September 25, 2026}

\begin{document}

\maketitle 

\begin{abstract}
We study simple predictable processes whose coefficients are represented by neural networks.
On finite measure spaces, we establish density results for neural networks in Orlicz spaces.
For filtrations generated by a stochastic process, measurable random variables, including at stopping times, can be approximated by neural networks depending on finitely many observations.
Every stochastic integral with respect to a semimartingale can then be approximated, in the semimartingale topology, by integrals of such simple predictable processes.
We show that restricting trading strategies to this class leaves the minimal mean-variance hedging error under partial information unchanged and obtain a no-free-lunch characterization in terms of equivalent martingale measures.
Finally, the Bichteler--Dellacherie characterization of semimartingales remains valid even upon restricting the predictable integrands to those whose coefficients are represented by neural networks.
\end{abstract}

\keywords{Neural networks; universal approximation theorems; Orlicz spaces; stochastic integration; semimartingales; mean-variance hedging; equivalent martingale measures}
\msc{Primary 60H05; Secondary 41A65, 46E30, 68T07, 91G20}
\jel{C45; G12; G13}

\clearpage

\section{Introduction}\label{sec:introduction}
Algorithmic trading refers to the use of computer algorithms, with limited or no human intervention, to automatically determine whether and when to submit orders for financial instruments, specify their prices and quantities, and manage them after submission \citep{ecb19algorithmic}.
Since the early 2000s, algorithmic trading has expanded steadily; a 2026 supervisory briefing describes professional trading without computer algorithms as exceptional \citep{esma26algorithmic}.
This expansion has been facilitated by greater computing power, lower storage costs, and advances in machine learning.
Automation can also reduce labor and related costs, process large volumes of market data rapidly, and execute trades promptly \citep{ecb19algorithmic}.
These benefits are accompanied by operational risks from failures of algorithms, information systems, or trading processes, and by the risk that interactions between trading algorithms amplify market stress \citep{ecb19algorithmic,cftcsec10market}.
The Flash Crash of May~6, 2010 illustrates the latter risk: a large automated sell program entered an already stressed market and interacted with high-frequency traders as liquidity deteriorated rapidly \citep[pp.~2--3,~6]{cftcsec10market}.

Research on artificial neural networks was recognized by the 2024 Nobel Prize in Physics awarded to John Hopfield and Geoffrey Hinton \citep{nobel24physics}.
The methods used to train neural networks draw on classical mathematics: backpropagation uses the chain rule from differential calculus \citep{schmidhuber15overview}, and \citet{cauchy47methode} proposed an early gradient descent method.
Universal approximation results for neural networks were established beginning in the late 1980s \citep{cybenko89approximation,hornik89approximation,hornik91approximation}.
For example, consider feedforward networks on $\re^d$ with a continuous, bounded, nonconstant activation function, one hidden layer with finitely many units, a linear output, and unrestricted weights and biases.
Their restrictions to any compact $K\subseteq\re^d$ are dense in $C(K)$ under the uniform norm.
They are also dense in $L^p(\mu)$ for every finite Borel measure $\mu$ on $\re^d$ and $1\le p<\infty$ \citep[Theorems~1--2]{hornik91approximation}.
Such density results are called \emph{universal approximation theorems}, and the underlying density property is called the \emph{universal approximation property} \citep{kratsios21approximation}.

In mathematical finance, prices and trading strategies are usually modeled by stochastic processes, together with a filtration describing the flow of information.
Uniform approximation in $C(K)$ then has a natural stochastic analogue in uniform convergence on compact time intervals in probability (ucp).
For real-valued càdlàg or càglàd processes $X^n$ and $X$ on $[0,\infty)$, this means that $\sup_{0\le s\le t}|X_s^n-X_s|\to0$ in probability for every $0<t<\infty$.
A finer topology than the ucp topology for stochastic integral processes is the semimartingale topology of \citet[p.~264]{emery79topologie}, denoted by $\T_{\mathrm{sm}}$.
For stochastic integrands, convergence in $L^p(M)$ is an analogue of classical $L^p(\mu)$-approximation.
For a real-valued càdlàg martingale $M$ with $M_0=0$ and $\sup_{t\ge0}|M_t|\in L^p$, this space consists of predictable processes $V$ with finite seminorm $\|V\|_{L^p(M)}\coloneqq\mathbb{E}[(\int_0^\infty V_s^2\,\mathrm{d}[M]_s)^{p/2}]^{1/p}$.
Here $[M]$ denotes quadratic variation, so the error is measured against $[M]$, rather than uniformly in time.
By the Burkholder--Davis--Gundy inequality, convergence of predictable integrands in this seminorm yields convergence of the corresponding stochastic integrals in the maximal $L^p$ norm \citep[Theorem~11.5.5 and Corollary~12.3.6]{cohen15calculus}.
Thus extending classical approximation theorems to stochastic processes requires more than changing the topology: approximation errors are random, approximating integrands must be predictable, and their errors must be controlled relative to the integrator.

Our starting point is the discrete-time deep hedging framework of \citet{buehler19hedging}, which accommodates transaction costs and liquidity constraints.
Its approximation argument expresses trading positions as measurable functions of market information and then approximates these functions by neural networks.
We study the corresponding approximation problem in frictionless continuous-time markets, using algorithmic strategies built from buy-and-hold positions specified by neural networks.
The driving process $Y$ is an $\re^d$-valued L\'{e}vy process with its completed natural filtration $\FF=(\Fcal_t)_{t\ge0}$.
A buy-and-hold component then has the form
\begin{equation*}
f(\tau_i, Y_{t_1\wedge\tau_i}, \ldots, Y_{t_n\wedge\tau_i})\indicatorset{(\tau_i,\tau_{i+1}]}(t),
\end{equation*}
where $\tau_i\le\tau_{i+1}<\infty$ are $\FF$-stopping times, $0\le t_1<\cdots<t_n<\infty$ are deterministic sampling times, and $f\from\re^{1+dn}\to\re$ is a neural network.
At $\tau_i$, the neural network $f$ uses finitely many observed values of $Y$, $Y_{t_{1}\wedge\tau_i}, \ldots, Y_{t_n\wedge\tau_i}$, and specifies the position $f(\tau_i, Y_{t_{1}\wedge\tau_i}, \ldots, Y_{t_n\wedge\tau_i})$ held until $\tau_{i+1} > \tau_i$. 
Algorithmic strategies are thus simple predictable processes, which are the elementary integrands used in the construction of stochastic integrals.

Just as measure theory begins with simple functions and extends the elementary integral to more general functions by passing to a limit, stochastic integration is built from simple predictable processes and their stochastic integrals.
The following theorem shows that algorithmic strategies can play the role of these elementary integrands (see Section~\ref{sec:stochastic-integration} for a proof):
\begin{theorem}\label{thm:ucp-approximation-intro}
Let $X$ be a c\`{a}dl\`{a}g $\FF$-semimartingale and let $V$ be an $\FF$-predictable $X$-integrable process.
Then there exists a sequence $(V^n)_{n\in\na}$ of algorithmic strategies such that
\begin{equation*}
\int V^n\,\mathrm{d}X \to \int V\,\mathrm{d}X,\quad n\to\infty,
\end{equation*}
where convergence holds in $\T_{\mathrm{sm}}$.
If $V$ is c\`{a}gl\`{a}d, then the sequence $(V^n)_{n\in\na}$ can be chosen so that $V^n \to V$ in the ucp topology.
\end{theorem}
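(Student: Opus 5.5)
We prove the statement by a chain of three density steps. Each step is closed in the semimartingale topology $\T_{\mathrm{sm}}$, and we combine them with a diagonal argument; this works because $\T_{\mathrm{sm}}$ is metrizable and complete.

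\textbf{Step 1: reduction to simple predictable integrands.} By the classical construction of the stochastic integral, the space of integrals $\int H\,\mathrm{d}X$ with $H$ simple predictable is dense in $\{\int V\,\mathrm{d}X : V \text{ is } X\text{-integrable}\}$ for $\T_{\mathrm{sm}}$; this is Émery's result that the stochastic integral is the closure of elementary integrals. For the simple predictable processes we may take the form $H=\sum_i \xi_i\indicatorset{(\tau_i,\tau_{i+1}]}$ with bounded $\Fcal_{\tau_i}$-measurable $\xi_i$. If $V$ is càglàd, we use the Riemann-type approximation along the stopping times $\tau^n_{i+1}=\inf\{t>\tau^n_i:|V_{t+}-V_{\tau^n_i+}|>1/n\}$. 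This gives $H^n\to V$ in ucp, and also $\int H^n\,\mathrm{d}X\to\int V\,\mathrm{d}X$ in $\T_{\mathrm{sm}}$. To keep the $\xi_i$ bounded we first localize by truncation, which is harmless in ucp.

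\textbf{Step 2: approximating each coefficient.} Fix one term $\xi\indicatorset{(\sigma,\rho]}$ with $\xi$ bounded and $\Fcal_\sigma$-measurable. We use the earlier announced results that measurable random variables at stopping times can be approximated by neural networks of finitely many observations, in Orlicz or $L^p$ norm and hence in probability. This yields networks $f_k$ and sampling times with $f_k(\sigma,Y_{t_1\wedge\sigma},\dots,Y_{t_n\wedge\sigma})\to\xi$ in probability. We may also truncate, using a bounded activation or a clipping built into the network class, so that these approximants stay uniformly bounded.

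The key estimate is then the following. If $\eta_k\to\xi$ in probability with $|\eta_k|\le C$, then $\int(\eta_k-\xi)\indicatorset{(\sigma,\rho]}\,\mathrm{d}X=(\eta_k-\xi)(X^{\rho}-X^{\sigma})$ tends to $0$ in $\T_{\mathrm{sm}}$. To see this, recall that $\T_{\mathrm{sm}}$ convergence means $\sup_{|K|\le1}$ of the ucp distance of $\int K\,\mathrm{d}Z$ tends to $0$. Here $\int K\,\mathrm{d}Z_k=(\eta_k-\xi)\int K\indicatorset{(\sigma,\rho]}\,\mathrm{d}X$. The family $\{\int K\indicatorset{(\sigma,\rho]}\,\mathrm{d}X:|K|\le1\}$ is bounded in probability uniformly on compacts, by the Bichteler--Dellacherie/Émery continuity. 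Multiplying a tight family by a factor tending to $0$ in probability gives convergence to $0$ uniformly in $K$. Summing over the finitely many terms of $H$ preserves the convergence.

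\textbf{Step 3: the ucp claim and the main obstacle.} For càglàd $V$ we have $\sup_t|V^n_t-H_t|\le\max_i|\eta_i-\xi_i|$ on compact time intervals, so ucp convergence also passes through Step 2, and a diagonal sequence completes the proof.

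The main obstacle is Step 2's input: approximating an $\Fcal_\sigma$-measurable variable, for a \emph{random} time $\sigma$, by a network evaluated at $(\sigma,Y_{t_j\wedge\sigma})$. We plan to handle this by invoking the paper's earlier result that $\Fcal_\sigma=\sigma(\sigma,Y_{t\wedge\sigma}:t\ge0)$ up to null sets; this needs the completed natural filtration of the Lévy process $Y$, via a Galmarino-type argument. A monotone-class argument then reduces to finitely many times $t_j$, and the Orlicz/$L^p$ universal approximation on the pushforward law supplies $f$. We must also make sure the stopping times $\tau_i$ remain finite, as required in the definition of algorithmic strategies. We can arrange this by replacing $\tau_i$ with $\tau_i\wedge N$, which changes the integral only after time $N$ and hence vanishes in $\T_{\mathrm{sm}}$ as $N\to\infty$.
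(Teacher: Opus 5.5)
Your proposal is correct and follows essentially the paper's route. Like you, the paper reduces to bounded elementary integrands and approximates their coefficients by networks, with these differences: it proves the density step itself by a monotone-class and dominated-convergence argument on the $\T_{\mathrm{sm}}$-closure of $\{U\cdot X:U\in\mathcal E(\psi)\}$, using deterministic times so that only Theorem~\ref{thm:random-variable-approximation}(b)(i) is needed there; it transfers the coefficient approximation through continuity of $H\mapsto H\cdot X$ from $(\mathcal L,\T_{\mathrm{ucp}})$ into $(\mathcal{SM},\T_{\mathrm{sm}})$ rather than through your direct tightness estimate; and its c\`{a}gl\`{a}d part is the same Protter-density argument as yours.

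One correction: $\mathcal{NN}_\tau(\psi)$ has no built-in clipping, so your claimed uniform bound $|\eta_k|\le C$ is unjustified. Your estimate never actually uses it, only $\eta_k-\xi\to0$ in probability, so you can simply drop that hypothesis.
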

Algorithmic strategies form a dense approximation class not only for trading strategies themselves, but also for the stochastic integrals they generate.
Theorem~\ref{thm:ucp-approximation-intro} thus gives a neural network approximation in the theory of stochastic integration, extends the setting of \citet{buehler19hedging} to continuous time, and yields several consequences for mathematical finance.

Hedging is a standard application of trading algorithms, both for managing option exposures and for offsetting positions across related markets \citep[pp.~33,~40]{sec20algorithmic}.
It is also a central problem of mathematical finance, where mean-variance hedging provides a classical way to address the residual risk of incomplete markets \citep{rheinlaender97projections,schweizer01quadratic}.
For a square-integrable martingale price process satisfying the moment assumption of Theorem~\ref{thm:mean-variance-hedging}, stochastic integrals of algorithmic strategies are dense in the stable subspace it generates, with convergence in $\mathcal{H}^2$.
The expected squared hedging error of algorithmic strategies can therefore be made arbitrarily close to the minimum mean-square hedging error, thereby extending the corresponding result of \citet{buehler19hedging} to continuous time.

The fundamental theorem of asset pricing connects the no-arbitrage principle with martingale methods underlying risk-neutral valuation.
For locally bounded semimartingale price processes, the absence of a free lunch with vanishing risk is equivalent to the existence of an equivalent local martingale measure \citep{delbaen94arbitrage}.
A related classical approach works directly with gains from elementary trading strategies: for continuous price processes, \citet[Theorem~3]{stricker90arbitrage} gives an $L^p$ criterion for the existence of an equivalent martingale measure.
We adapt this criterion to algorithmic strategies by showing that their terminal gains have the same $L^p$-closure as those of elementary strategies, under the moment assumptions of Theorem~\ref{thm:equivalent-martingale-measures}.
This allows the corresponding $L^p$ no-free-lunch condition to be formulated using the neural network gain class, rather than all elementary gains.

The Bichteler--Dellacherie theorem ties the semimartingale property directly to the stability of stochastic integration: an adapted c\`{a}dl\`{a}g process is a semimartingale precisely when it is a good integrator for bounded elementary predictable processes \citep{bichteler79integrators,dellacherie80probabilites,bichteler81integration}.
Outside the class of semimartingales, models can exhibit pathological trading behavior: fractional Brownian motion, for instance, fails to be a semimartingale unless its Hurst parameter equals $1/2$, and the corresponding frictionless price models can admit arbitrage when trading times are allowed to become arbitrarily close \citep{cheridito03fractional}.
The Bichteler--Dellacherie theorem is relevant here because its test class consists of elementary strategies, which include our algorithmic strategies.
We prove that the semimartingale property can be characterized in terms of algorithmic strategies: a c\`{a}dl\`{a}g adapted process is a semimartingale if and only if it is a good integrator for bounded algorithmic strategies, thus allowing one to test the semimartingale property within this class.

Our contributions are as follows.
\begin{enumerate}[label=\Roman*.]
\item We establish a universal approximation theorem for neural networks on measurable spaces, with approximation measured in Orlicz norms (Theorem~\ref{thm:orlicz-approximation}), and examples including neural networks on spaces of c\`{a}dl\`{a}g paths and martingales.
\item We extend this approximation principle to random variables measurable with respect to $\sigma$-algebras generated by arbitrary, possibly uncountable, families of random variables (Theorem~\ref{thm:random-variable-approximation}).
The approximating neural networks depend on only finitely many members of the generating family.
For filtrations generated by a L\'{e}vy process, this extends to random variables measurable with respect to stopping-time $\sigma$-algebras.
When the filtration is generated by either a continuous or a c\`{a}dl\`{a}g process, we also prove the corresponding approximation result for neural networks that depend on the whole path of the process.
\item We extend these results to stochastic integrals, proving that every predictable stochastic integral with respect to a semimartingale can be approximated in $\T_{\mathrm{sm}}$ by integrals of algorithmic strategies. For adapted c\`{a}gl\`{a}d integrands, algorithmic strategies also approximate the integrands in ucp (Theorem~\ref{thm:ucp-approximation}). Under stronger integrability assumptions, we obtain approximation in $L^p(X)$, with convergence of the integrals in $\mathcal H_{\mathrm{sm}}^p$ (Theorem~\ref{thm:lp-integrand-approximation}).
\item We obtain an approximation result for continuous-time mean-variance hedging with algorithmic strategies (Theorem~\ref{thm:mean-variance-hedging}), an $L^p$ criterion for the existence of equivalent martingale measures in terms of algorithmic strategies (Theorem~\ref{thm:equivalent-martingale-measures}), and a characterization of semimartingales in terms of algorithmic strategies (Theorem~\ref{thm:semimartingale-characterization}).
\end{enumerate}

An earlier version of the main results of this paper appeared in Chapter~5 of the first author's doctoral thesis \citep{arandjelovic24theory}.

The remainder of the paper is organized as follows: 
Section~\ref{sec:networks-measurable-spaces} establishes universal approximation in Orlicz spaces for functions on measurable spaces. 
Section~\ref{sec:random-variable-approximation} proves universal approximation for random variables measurable with respect to $\sigma$-algebras generated by arbitrary, possibly uncountable, families.
Section~\ref{sec:stochastic-integration} introduces algorithmic strategies, proves universal approximation for stochastic processes and their integrals, and derives consequences including mean-variance hedging, existence of equivalent martingale measures, and characterizations of semimartingales. 
Section~\ref{sec:conclusion} concludes.

\section{Neural networks on measurable spaces}\label{sec:networks-measurable-spaces}
\citet[Theorem~1]{hornik91approximation} proved a universal approximation theorem for real-valued functions in $L^p(\mu)$, where $p \in [1, \infty)$ and $\mu$ is a finite Borel measure on $\re^d$, $d\in\na$.
In this section, we extend this result to Orlicz spaces for finite measures on general measurable spaces, including real locally convex Hausdorff topological vector spaces (called locally convex spaces below).
Let $(X, \mathcal{A})$ be a measurable space, and $\Gamma$ a non-empty set of real-valued $\mathcal{A}$-measurable functions.
For example, let $X$ be a complete separable metric space, $\mathcal A = \mathcal B_X$, and let $\Gamma$ be a countable set of real-valued Borel-measurable functions that separates points of $X$.
In that case, $\Gamma$ generates $\mathcal{B}_X$, i.e.\ $\mathcal{B}_X = \sigma(f \colon f \in \Gamma)$ \citep[Theorem~6.8.9]{bogachev07measure2}.
For a locally convex space $X$, the continuous dual $X^\ast$ is a point-separating family of Borel-measurable functions, by the Hahn--Banach theorem \citep[Theorem~3.4]{rudin91functional}.
We retain the general measurable setting; Standing Assumption~\ref{itm:generating-family} will be imposed later for Theorem~\ref{thm:orlicz-approximation}.

\begin{definition}[Neural network]\label{def:nn} 
Let $\psi \from \re \to \re$ be a Borel-measurable function. 
We define $\mathcal{NN}(\Gamma, \psi)$ as the $\re$-linear span of functions of the form 
\begin{equation*}
x \mapsto \psi\bigl(f(x) + \beta\bigr), 
\end{equation*} 
where $f \in \Gamma$ and $\beta \in \re$. 
The elements of $\mathcal{NN}(\Gamma, \psi)$ are called shallow feedforward neural networks with activation function $\psi$. 
\end{definition}

The neural networks in Definition~\ref{def:nn} are real-valued $\mathcal{A}$-measurable functions on $X$.
To extend the definition to Banach-space-valued networks, let $E$ be a real Banach space with a Schauder basis $(e_i)_{i \in \na}$.
Each $z \in E$ has a unique representation $z = \sum_{i=1}^\infty a_i e_i$ with real coefficients $a_i$ and convergence in norm.
Define $\mathcal{NN}(\Gamma, \psi; E)$ to be the $\re$-linear span of the functions of the form
\begin{equation*}
x \mapsto \sum_{i \in I} \psi\bigl(f_i(x) + \beta_i\bigr)e_i,
\end{equation*}
where $I \subset \na$ is finite, $f_i \in \Gamma$ and $\beta_i \in \re$ for all $i \in I$.
Each such network is $\mathcal{A}-\B_E$-measurable and takes values in a finite-dimensional subspace of $E$.
For $E=\re^n$, we use a basis $\{e_1, \ldots, e_n\}$ and take $I \subset \{1, \ldots, n\}$.

\begin{remark}\label{rem:nn-integral}
Every real-valued network $g \in \mathcal{NN}(\Gamma, \psi)$ has a representation of the form
\begin{equation}\label{eq:simple-nn}
g(x) = \sum_{i=1}^n \alpha_i \psi\bigl(f_i(x) + \beta_i\bigr),
\end{equation}
where $n \in \na$ is the number of hidden units in this representation, $\alpha_1, \ldots, \alpha_n$ are real weights, $\beta_1, \ldots, \beta_n$ are bias terms, and $f_1, \ldots, f_n \in \Gamma$.
For the integral representation below, assume that $\psi$ is bounded, that $X$ is a real separable Banach space, and that $\Gamma = X^\ast$.
Define $\Theta \coloneqq \Gamma \times \re$.
Assume also that $\Gamma$ is separable in the operator norm.
Then $\Theta$ is separable in the product topology and $\B_\Gamma \otimes \B_\re = \B_{\Theta}$ \citep[Theorem~15.53]{schmock24stochastic}.
Let $\mu$ be a finite signed Borel measure on $(\Theta, \B_\Gamma \otimes \B_\re)$. 
The representation~\eqref{eq:simple-nn} generalizes to the integral representation
\begin{equation}\label{eq:integral-nn}
g_\mu(x) = \int_\Theta \psi\bigl(f(x) + \beta\bigr)\,\mu(\mathrm{d}f, \mathrm{d}\beta).
\end{equation}
The map $(x,f,\beta)\mapsto\psi(f(x)+\beta)$ is bounded and $\mathcal{A} \otimes \B_\Theta$-measurable, so $g_\mu$ is a bounded $\mathcal{A}$-measurable function.
We recover~\eqref{eq:simple-nn} by choosing $\mu = \sum_{i=1}^n \alpha_i \delta_{z_i}$, with $z_i = (f_i, \beta_i)$.
For $X = \re^d$, the function $x \mapsto \psi(f(x) + \beta)$ is a ridge function; see \citet[Section~5.1]{unser23ridges} for the relation between ridge functions and the Radon transform.
Consider a convex optimization problem for fitting networks of the form~\eqref{eq:integral-nn} to finitely many data points, with a penalty on the total-variation norm of $\mu$.
Under suitable compactness and continuity assumptions, such problems admit minimizers with finite support, of the form $\mu = \sum_{i=1}^n \alpha_i \delta_{z_i}$, where $n \in \na$ and $z_i = (f_i,\beta_i) \in \Theta$ for $i=1,\ldots,n$ \citep{bach17convex}.
These minimizers correspond to finite sums of the form~\eqref{eq:simple-nn}.
\end{remark}

\begin{remark}
The separability assumption on $\Gamma$ in Remark~\ref{rem:nn-integral} need not hold.
A real separable Banach space need not have a separable continuous dual.
Let $\lambda$ be the Lebesgue--Borel measure on $([0,1], \B_{[0,1]})$.
Both $L^1(\lambda)$ and $L^\infty(\lambda)$ are Banach spaces \citep[Theorem~19.1]{billingsley12probability}.
Moreover, $(L^1(\lambda))^\ast \simeq L^\infty(\lambda)$ \citep[Theorem~19.3]{billingsley12probability}.
Since $\B_{[0,1]}$ is countably generated, $L^1(\lambda)$ is separable \citep[Theorem~19.2]{billingsley12probability}. 
However, $L^\infty(\lambda)$ is not separable. 
For $\varepsilon \in (0, 1]$, let $f_\varepsilon = 2\indicatorset{[0, \varepsilon]}$. 
The set $\{ f_\varepsilon \colon \varepsilon \in (0, 1] \}$ contains uncountably many bounded, $\B_{[0,1]}$-measurable functions, and $\| f_\varepsilon - f_{\tilde{\varepsilon}} \|_{L^\infty(\lambda)} = 2$ for all $\varepsilon, \tilde{\varepsilon} \in (0, 1]$ with $\varepsilon \neq \tilde{\varepsilon}$. 
This implies that the open unit balls $(B_1(f_\varepsilon))_{\varepsilon \in (0, 1]}$ are pairwise disjoint.
Since the family is uncountable, $L^\infty(\lambda)$ has no countable dense subset.
\end{remark}

We give several examples of spaces $X$ and families $\Gamma$, together with the corresponding sets $\mathcal{NN}(\Gamma, \psi)$ of neural networks.
In particular, Example~\ref{ex:martingales} treats neural networks on spaces of martingales.
\begin{example}\label{ex:hilbert}
Let $(X, \langle \cdot, \cdot \rangle_X)$ be a real separable Hilbert space.
We equip $X$ with its norm topology and set $\mathcal{A} = \B_X$.
By the Fr\'{e}chet--Riesz representation theorem \citep[Theorem~4.12]{rudin87analysis}, the continuous dual $X^\ast$ is isometrically isomorphic to $X$.
More precisely, for each $F \in X^\ast$, there exists a unique $h \in X$ such that $F(x) = \langle h,x \rangle_X$ for all $x \in X$.
Moreover, $\|F\|_{\mathrm{op}} = \|h\|_X$.
For $\Gamma = X^\ast$, the set $\mathcal{NN}(\Gamma, \psi)$ is the $\re$-linear span of functions of the form
\begin{equation*}
x \mapsto \psi\bigl(\langle h,x \rangle_X + \beta\bigr),
\end{equation*}
where $h \in X$ and $\beta \in \re$.
Example~\ref{ex:deterministic-integrands} below is the special case $X = \Lambda^2$.
\end{example}

\begin{example}\label{ex:deterministic-integrands}
We consider the Hilbert space of deterministic integrands used for importance sampling in \citet{arandjelovic25importance}.
Fix a time horizon $T>0$.
Let $(\Omega, \Fcal, \PP)$ be a probability space endowed with a filtration $\FF = (\Fcal_t)_{t \in [0,T]}$ such that $\Fcal_0$ contains all $\PP$-null sets of $\Fcal$.
Let $d\in\na$, and let $M = (M_t)_{t \in [0,T]}$ be a continuous $\re^d$-valued local $(\FF, \PP)$-martingale with deterministic covariation process $[M]$.
There exist a $\mathcal{B}_{[0,T]}$-measurable function $\pi \from [0,T] \to \re^{d \times d}$ with values in the symmetric positive semidefinite matrices and a finite Lebesgue--Stieltjes measure $\mu$ on $\mathcal{B}_{[0,T]}$ such that $[M]_t = \int_0^t \pi(s)\,\mu(\mathrm{d}s)$ for $t \in [0,T]$ \citep[Remark~2.5]{arandjelovic25importance}.
Let $\Lambda^2$ denote the set of all equivalence classes of $\mathcal{B}_{[0,T]}$-measurable functions $f \from [0,T] \to \re^d$ with $\int_0^T f^\top(s)\pi(s)f(s)\,\mu(\mathrm{d}s) < \infty$. 
We identify $f$ and $g$ if $(f-g)^\top\pi(f-g) = 0$ $\mu$-almost everywhere.

Endowed with the inner product $\langle f,g \rangle = \int_0^T f^\top(s)\pi(s)g(s)\,\mu(\mathrm{d}s)$, $\Lambda^2$ is a real separable Hilbert space, independent of the choice of $(\pi,\mu)$ satisfying the covariation representation \citep[Lemma~2.10(a),(e)]{arandjelovic25importance}.
We equip $X = \Lambda^2$ with its norm topology and Borel $\sigma$-algebra $\mathcal{A} = \B_{\Lambda^2}$.
As in Example~\ref{ex:hilbert}, the continuous dual $(\Lambda^2)^\ast$ is isometrically isomorphic to $\Lambda^2$.
More precisely, for each $F \in (\Lambda^2)^\ast$, there exists a unique $g \in \Lambda^2$ such that $F(f) = \int_0^T g^\top(s)\pi(s)f(s)\,\mu(\mathrm{d}s)$ for all $f \in \Lambda^2$.
Let $\Gamma = (\Lambda^2)^\ast$.
The set $\mathcal{NN}(\Gamma, \psi)$ of neural networks on $\Lambda^2$ is then the $\re$-linear span of functions of the form
\begin{equation*}
f \mapsto \psi\Bigl( \int_0^T g^\top(s)\pi(s)f(s)\,\mu(\mathrm{d}s) + \beta \Bigr),
\end{equation*}
where $g \in \Lambda^2$ and $\beta \in \re$.
\end{example}

\begin{example}\label{ex:continuous-functions}
Let $K$ denote a non-empty compact Hausdorff space, and let $X = C(K)$ be the Banach space of real-valued continuous functions on $K$, endowed with the supremum norm $\| f \| = \sup_{x \in K} |f(x)|$.
Let $\mathcal{M}_{\mathrm{r}}(K)$ denote the space of finite signed Radon measures on $(K,\B_K)$.
The Riesz representation theorem shows that $X^\ast$ can be identified with $\mathcal{M}_{\mathrm{r}}(K)$ \citep[Theorem~6.19]{rudin87analysis}.
More precisely, for each $F \in X^\ast$, there exists a unique $\mu \in \mathcal{M}_{\mathrm{r}}(K)$ such that $F(f) = \int_K f(x)\,\mu(\mathrm{d}x)$ for all $f \in X$.
Let $\Gamma = X^\ast \simeq \mathcal{M}_{\mathrm{r}}(K)$ and $\mathcal{A} = \sigma(\Gamma)$.
The set $\mathcal{NN}(\Gamma, \psi)$ of neural networks on $C(K)$ is then the $\re$-linear span of functions of the form
\begin{equation*}
f \mapsto \psi\Bigl( \int_K f(x)\,\mu(\mathrm{d}x) + \beta \Bigr),
\end{equation*}
where $\mu \in \mathcal{M}_{\mathrm{r}}(K)$ and $\beta \in \re$.
\end{example}

\begin{example}\label{ex:cadlag-functions}
Let $\mathcal{D}$ denote the space of real-valued c\`{a}dl\`{a}g functions on $[0,1]$.
Write $\mathcal{D}_B$ for $\mathcal{D}$ endowed with the supremum norm $\|f\|_\infty = \sup_{t \in [0,1]} |f(t)|$, and $\mathcal{D}_S$ for the same space endowed with the Skorokhod $J_1$ topology.
The space $X = \mathcal{D}_B$ is a Banach space.
Let $\Gamma = X^\ast = \mathcal{D}_B^\ast$ and $\mathcal{A} = \sigma(\Gamma)$.
By \citet[Theorem~3]{pestman95measurability}, $\mathcal{A} = \B_{\mathcal{D}_S}$.

For $f \in \mathcal{D}$ and $t \in (0,1]$, write $\Delta f(t) = f(t)-f(t-)$.
Let $\ell^1((0,1])$ denote the space of functions $h\from (0,1] \to \re$ that vanish outside a countable set and satisfy $\sum_{t \in (0,1]} |h(t)| < \infty$.
By \citet[Theorem~1]{pestman95measurability}, for each $F \in \mathcal{D}_B^\ast$, there exist unique $\mu \in \mathcal{M}_{\mathrm{r}}([0,1])$ and $h \in \ell^1((0,1])$ such that
\begin{equation*}
F(f) = \int_{[0,1]} f(t)\,\mu(\mathrm{d}t) + \sum_{t \in (0,1]} h(t)\Delta f(t),\qquad f \in \mathcal{D}.
\end{equation*}
The series converges absolutely since $|\Delta f(t)| \leq 2\|f\|_\infty$.
Conversely, every such pair defines an element of $\mathcal{D}_B^\ast$.
The set $\mathcal{NN}(\Gamma, \psi)$ of neural networks on $\mathcal{D}$ is then the $\re$-linear span of functions of the form
\begin{equation*}
f \mapsto \psi\Bigl(\int_{[0,1]} f(t)\,\mu(\mathrm{d}t) + \sum_{t \in (0,1]} h(t)\Delta f(t) + \beta \Bigr),
\end{equation*}
where $\mu \in \mathcal{M}_{\mathrm{r}}([0,1])$, $h \in \ell^1((0,1])$, and $\beta \in \re$.
\end{example}

\begin{example}\label{ex:martingales}
Let $(\Omega, \Fcal, \PP)$ be a complete probability space endowed with a right-continuous filtration $\FF = (\Fcal_t)_{t \in \replus}$ such that $\Fcal_0$ contains all $\PP$-null sets of $\Fcal$, and let $\Fcal_\infty = \bigvee_{t \in \replus} \Fcal_t$.
In that case, every $(\FF, \PP)$-martingale has a c\`{a}dl\`{a}g modification \citep[Corollary~5.1.9]{cohen15calculus}, and we shall always implicitly consider such a version when speaking of an $(\FF, \PP)$-martingale.
Moreover, we identify indistinguishable processes.
For a c\`{a}dl\`{a}g process $M = (M_t)_{t \in \replus}$ and $t \in [0, \infty]$, let $M_t^\ast \coloneqq \sup_{s \in [0, t] \cap \replus} |M_s|$.

Given $p \in [1, \infty)$, let $X = \mathcal{H}^p$, the Banach space of all real-valued $(\FF, \PP)$-martingales $M$ with $\| M \|_{\mathcal{H}^p} \coloneqq \|M_\infty^\ast\|_p < \infty$ \citep[Lemma~10.1.5]{cohen15calculus}.
Fix $p \in (1, \infty)$ and let $q = p/(p-1)$.
Then $\| M \|_{\mathcal{H}^p}$ is equivalent to $\| M_\infty \|_p$ on $\mathcal{H}^p$ \citep[Lemma~10.1.3]{cohen15calculus}, where $M_\infty$ denotes the almost sure limit of $M$.
Therefore, $\mathcal{H}^p$ can be identified with the Banach space $L^p(\Omega, \Fcal_\infty, \PP)$ by the map $M \mapsto M_\infty$.
Its inverse associates $Z \in L^p(\Omega, \Fcal_\infty, \PP)$ with the c\`{a}dl\`{a}g version of $(\mathbb{E}[Z \mid \Fcal_t])_{t \in \replus}$.
Moreover, if $\Fcal_\infty$ is countably generated, then $L^p(\Omega, \Fcal_\infty, \PP)$ and thus $\mathcal{H}^p$ are separable \citep[Theorem~19.2]{billingsley12probability}.

Since $(L^p(\Omega, \Fcal_\infty, \PP))^\ast$ is isometrically isomorphic to $L^q(\Omega, \Fcal_\infty, \PP)$ \citep[Theorem~19.3]{billingsley12probability}, it follows that $(\mathcal{H}^p)^\ast$ can be identified with $\mathcal{H}^q$.
More precisely, to each $F \in (\mathcal{H}^p)^\ast$ there corresponds a unique $N \in \mathcal{H}^q$ such that $F(M) = \mathbb{E}[ M_\infty N_\infty ]$ for every $M \in \mathcal{H}^p$.
Let $\Gamma = (\mathcal{H}^p)^\ast \simeq \mathcal{H}^q$ and $\mathcal{A} = \sigma(\Gamma)$.
The set $\mathcal{NN}(\Gamma, \psi)$ of neural networks on $\mathcal{H}^p$ is the $\re$-linear span of functions of the form
\begin{equation*}
M \mapsto \psi\big( \mathbb{E}[ M_\infty N_\infty ] + \beta \big),
\end{equation*}
where $N \in \mathcal{H}^q$ and $\beta \in \re$.
For the duality theory when $p=1$, we refer to \citet[Appendix~A.8]{cohen15calculus}.
\end{example}

\subsection{Universal approximation in Orlicz spaces}\label{subsec:orlicz-approximation}
We study universal approximation in Orlicz hearts $M^\Phi(\mu)$ and Orlicz spaces $L^\Phi(\mu)$.
Recall the measurable space $(X, \mathcal{A})$, and let $\mu \neq 0$ be a finite measure on $\mathcal{A}$.
A function $\Phi \from \re \to [0, \infty]$ is called a Young function if it is convex, even, lower semicontinuous and nontrivial (i.e.\ $\Phi(x) \in \replus$ for some $x > 0$), with $\Phi(0) = 0$ and $\lim_{x \to \infty} \Phi(x) = \infty$.
Given a Young function $\Phi$, its convex (Legendre--Fenchel) conjugate is defined by $\Psi(y) = \sup_{x \ge 0} (x |y| - \Phi(x))$ for every $y \in \re$.
The function $\Psi$ is again a Young function, and $(\Phi, \Psi)$ is called a complementary pair of Young functions.
Every $\replus$-valued Young function $\Phi$ is continuous and admits the integral representation $\Phi(x) = \int_0^{|x|} \varphi(t)\, \mathrm{d}t$, where $\varphi \from \replus \to \replus$ is given by $\varphi(0) = 0$ and $\varphi(t) = \Phi'_-(t)$ for $t > 0$ \citep[Corollary~1.3.2]{rao91orlicz}.
Here $\Phi'_-$ denotes the left derivative; the function $\varphi$ is nondecreasing and left-continuous.
For any nonnegative real-valued random variable $\xi$ on a probability space $(\Omega, \Fcal, \PP)$, Tonelli's theorem gives
\begin{equation*}
\mathbb{E}[\Phi(\xi)] = \int_0^\infty \PP(\xi > t)\varphi(t)\, \mathrm{d}t = \int_0^\infty \PP(\xi \ge t)\varphi(t)\, \mathrm{d}t,
\end{equation*}
where the expectation and integrals may be infinite.
This partially generalizes \citet[Lemma~4.4]{kallenberg21foundations} from $\Phi(x) = |x|^p$ for $p \ge 1$ to all $\replus$-valued Young functions $\Phi$ such as, for example, $\Phi(x) = \mathrm{e}^{x^2}-1$, where $\varphi(t) = 2t\mathrm{e}^{t^2}$.

Given a complementary pair $(\Phi, \Psi)$ of Young functions, let $\mathcal{L}^\Phi(\mu)$ consist of all $\mathcal{A}$-measurable functions $f \from X \to \re$ such that $\int_X \Phi(\alpha f)\, \mathrm{d}\mu < \infty$ for some $\alpha > 0$.
The Orlicz space $L^\Phi(\mu)$ consists of the equivalence classes of functions in $\mathcal{L}^\Phi(\mu)$ under $\mu$-almost everywhere equality.
Let $\mathcal{A}^\Psi$ denote the set of all $g \in L^\Psi(\mu)$ such that $\int_X \Psi(g)\, \mathrm{d}\mu \le 1$.
We define the Orlicz norm $\| \cdot \|_\Phi$ on $L^\Phi(\mu)$ as 
\begin{equation*} 
\| f \|_\Phi = \sup_{g \in \mathcal{A}^\Psi} \int_X |fg|\, \mathrm{d}\mu.
\end{equation*} 
The space $(L^\Phi(\mu), \| \cdot \|_\Phi)$ is a Banach space \citep[Proposition~3.3.11]{rao91orlicz}, and the Orlicz norm is equivalent to the Luxemburg norm $N_\Phi(\cdot)$ \citep[Proposition~3.3.4]{rao91orlicz}.
The set $\mathcal{M}^\Phi(\mu)$ consists of all $\mathcal{A}$-measurable $f \from X \to \re$ such that $\int_X \Phi(\alpha f)\, \mathrm{d}\mu < \infty$ for every $\alpha > 0$, and the Orlicz heart $M^\Phi(\mu)$ consists of the corresponding equivalence classes under $\mu$-almost everywhere equality and is endowed with the restriction of $\| \cdot \|_\Phi$.

A Young function $\Phi$ is said to satisfy the $\Delta_2$-condition ($\Phi \in \Delta_2$) if $\Phi(\re) \subset \replus$ and there exist constants $x_0 > 0$ and $K > 0$ such that $\Phi(2x) \le K \Phi(x)$ for all $x \ge x_0$.
In that case, $M^\Phi(\mu) = L^\Phi(\mu)$, and simple functions are dense in $L^\Phi(\mu)$ \citep[Corollary~3.4.5]{rao91orlicz}. 
In particular, if $\Phi \in \Delta_2$, then $(M^\Phi(\mu), \| \cdot \|_\Phi)$ is a Banach space.
Examples of Young functions satisfying or failing the $\Delta_2$-condition are: 
\begin{itemize}
\item For each $p \in [1, \infty)$ and $a > 0$, the function $\Phi(x) = a|x|^p$ satisfies the $\Delta_2$-condition.
\item The function $\Phi(x) = \mathrm{e}^{|x|} - 1$ does not satisfy the $\Delta_2$-condition.
\item The function $\Phi(x) = x^2 / \log(\mathrm{e} + |x|)$ satisfies the $\Delta_2$-condition.
\item The function $\Phi(x) = (1+|x|)\log(1+|x|) - |x|$ satisfies the $\Delta_2$-condition.
\end{itemize}

\begin{lemma}\label{lem:orlicz-l1}
For every Young function $\Phi$, we have $L^\Phi(\mu) \subset L^1(\mu)$.
\end{lemma}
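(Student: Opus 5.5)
We will use convexity of $\Phi$ together with finiteness of $\mu$. The idea is that a Young function grows at least linearly beyond some point. Since $\Phi$ is convex, even, satisfies $\Phi(0)=0$ and $\lim_{x\to\infty}\Phi(x)=\infty$, we first pick $x_0>0$ with $\Phi(x_0)\ge 1$. If $\Phi(x_0)=\infty$, this is still allowed: such a point exists because the limit is infinite. For $|x|\ge x_0$, convexity and $\Phi(0)=0$ give $\Phi(x_0)=\Phi\bigl(\tfrac{x_0}{|x|}|x|+(1-\tfrac{x_0}{|x|})\cdot 0\bigr)\le \tfrac{x_0}{|x|}\Phi(|x|)$. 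Hence
\begin{equation*}
|x| \le \frac{x_0}{\Phi(x_0)}\,\Phi(x)\le x_0\,\Phi(x),\qquad |x|\ge x_0,
\end{equation*}
where we use evenness to pass from $\Phi(|x|)$ to $\Phi(x)$. Since $|x|<x_0$ otherwise, we obtain the pointwise bound $|x|\le x_0+x_0\Phi(x)$ for all $x\in\re$. This inequality also holds in the extended sense when $\Phi(x)=\infty$.

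Now let $f\in\mathcal{L}^\Phi(\mu)$, so that $\int_X\Phi(\alpha f)\,\mathrm{d}\mu<\infty$ for some $\alpha>0$. Applying the bound to $\alpha f(x)$ and integrating yields
\begin{equation*}
\alpha\int_X|f|\,\mathrm{d}\mu \le x_0\,\mu(X) + x_0\int_X\Phi(\alpha f)\,\mathrm{d}\mu<\infty,
\end{equation*}
and the right-hand side is finite because $\mu$ is a finite measure. Therefore $f\in\mathcal{L}^1(\mu)$. Both spaces are quotients by $\mu$-a.e.\ equality, so the inclusion passes to equivalence classes, giving $L^\Phi(\mu)\subset L^1(\mu)$.

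The only mildly delicate step is the choice of $x_0$ when $\Phi$ may take the value $\infty$, together with the linear-growth inequality in that case. Choosing any $x_0$ with $\Phi(x_0)\ge1$ handles this: if $\Phi(x_0)=\infty$, we replace it by a point where $\Phi$ is finite and at least $1$ if such a point exists. Otherwise $\Phi=\infty$ on all of $\{|x|\ge x_0\}$, and integrability of $\Phi(\alpha f)$ then forces $|\alpha f|<x_0$ $\mu$-a.e., so $f$ is bounded and integrable directly. The finiteness of $\mu$ is essential: without it the constant term $x_0\,\mu(X)$ would be infinite.
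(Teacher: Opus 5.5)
Your proof is correct, and its skeleton matches the paper's: obtain a pointwise affine minorant $\Phi(x)\ge a|x|-b$ and integrate it against the finite measure $\mu$. The difference is where the minorant comes from.

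The paper takes it from the nontriviality of the conjugate. Since $\Psi$ is a Young function, $b\coloneqq\Psi(a)<\infty$ for some $a>0$, and the definition of $\Psi$ (the Fenchel--Young inequality) gives $\Phi(x)\ge ax-b$ for $x\ge0$. You instead derive the minorant directly from convexity, $\Phi(0)=0$ and $\Phi(x)\to\infty$, using the monotonicity of chord slopes through the origin. This yields $\Phi(x)\ge |x|/x_0-1$, which is precisely the statement $\Psi(1/x_0)\le1$.

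The paper's version is therefore a one-liner resting on the cited, unproved fact that $\Psi$ is again a Young function. Yours is self-contained: it proves exactly the part of that fact which is needed, with explicit constants.

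Your final paragraph is unnecessary. Writing $1\le\Phi(x_0)\le\frac{x_0}{|x|}\Phi(x)$ in $[0,\infty]$ for $|x|\ge x_0$ already covers the case $\Phi(x_0)=\infty$. In that case you should also avoid the intermediate expression $\frac{x_0}{\Phi(x_0)}\Phi(x)$, which becomes $0\cdot\infty$.
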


\begin{proof}
Since the convex conjugate $\Psi$ is a Young function, there exists $a > 0$ such that $b \coloneqq \Psi(a) < \infty$.
Then $b \ge 0$, and the definition of $\Psi$ gives $\Phi(x) \ge ax - b$ for every $x \in \replus$.
For $f \in L^\Phi(\mu)$, let $c > 0$ be such that $\int_X \Phi(c f)\, \mathrm{d}\mu < \infty$.
Since $\Phi$ is even, $ac|f| \le \Phi(c|f|) + b = \Phi(cf) + b$, hence
\begin{equation*}
\| f \|_1 \le \frac{1}{ac} \Big( \int_X \Phi(c f)\, \mathrm{d}\mu + b \mu(X) \Big),
\end{equation*}
where the right-hand side is finite.
\end{proof}

The following criterion will be used to prove norm convergence of conditional expectations.
\begin{lemma}\label{lem:orlicz-mean-convergence}
Let $\Phi \in \Delta_2$, let $(f_n)_{n \in \na}$ be a sequence in $L^\Phi(\mu)$, and let $f \in L^\Phi(\mu)$.
Then $\| f_n-f \|_\Phi \to 0$ if and only if $f_n \to f$ in $\mu$-measure and $\int_X \Phi(f_n-f)\, \mathrm{d}\mu \to 0$.
The latter condition is called mean convergence.
In particular, $\| f_n-f \|_\Phi \to 0$ if $f_n \to f$ in $\mu$-measure and $\int_X \Phi(f_n)\, \mathrm{d}\mu \to \int_X \Phi(f)\, \mathrm{d}\mu$.
\end{lemma}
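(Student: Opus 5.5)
The plan is to reduce everything to the Luxemburg norm $N_\Phi$, which is equivalent to $\|\cdot\|_\Phi$ by \citep[Proposition~3.3.4]{rao91orlicz}. For $\Phi \in \Delta_2$ one also uses that $\Phi$ is finite, continuous, and satisfies $\Phi(2x) \le K\Phi(x)$ for $x \ge x_0$. Since $\mu$ is finite, this upgrades to a global bound: for $x \ge 0$, $\Phi(2x) \le K\Phi(x) + \Phi(2x_0)$.

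\emph{Forward direction.} Suppose $\|f_n - f\|_\Phi \to 0$. Then $N_\Phi(f_n - f) \to 0$, so for $n$ large $\varepsilon_n \coloneqq N_\Phi(f_n-f) \le 1$. Convexity together with $\Phi(0)=0$ gives
\begin{equation*}
\int_X \Phi(f_n - f)\, \mathrm{d}\mu \le \varepsilon_n \int_X \Phi\Bigl(\frac{f_n-f}{\varepsilon_n}\Bigr)\, \mathrm{d}\mu \le \varepsilon_n \to 0 .
\end{equation*}
Here the second inequality uses left-continuity of the Luxemburg infimum (or monotone convergence). This is mean convergence. Convergence in measure then follows from a Chebyshev-type bound: for $\delta > 0$, $\mu(|f_n - f| > \delta) \le \int_X \Phi(f_n-f)\, \mathrm{d}\mu / \Phi(\delta)$. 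This requires $\Phi(\delta) > 0$. If $\Phi$ vanishes on $[0, c]$, I would use the same argument applied to $(f_n-f)/\varepsilon_n$ instead, which gives $\mu(|f_n-f| > \delta) \le 1/\Phi(\delta/\varepsilon_n) \to 0$, because $\Phi(x) \to \infty$.

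\emph{Converse.} This is the main point. Given mean convergence, I must show that $\int_X \Phi(\lambda(f_n-f))\, \mathrm{d}\mu \to 0$ for every $\lambda > 0$; that implies $N_\Phi(f_n-f) \le 1/\lambda$ eventually. It suffices to treat $\lambda = 2^k$ and iterate $\Delta_2$ $k$ times:
\begin{equation*}
\Phi(2^k x) \le K^k \Phi(x) + C_k \indicatorset{\{x < x_0'\}}\text{-type terms}.
\end{equation*}
More carefully, I split $X$ into $\{|f_n - f| \ge x_0\}$ and its complement. On $\{|f_n-f| \ge x_0\}$, since $2^j|f_n-f| \ge x_0$ at every step, we get $\Phi(2^k(f_n-f)) \le K^k \Phi(f_n-f)$, whose integral tends to $0$. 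On $\{|f_n-f| < x_0\}$, the integrand is bounded by $\Phi(2^k x_0)$ and tends to $0$ in measure, by continuity of $\Phi$ at $0$ and convergence in measure. Dominated convergence in measure form, using the finiteness of $\mu$, gives that this part vanishes too. This is exactly where convergence in measure is needed, and it is the step I expect to require care.

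\emph{The ``in particular'' statement.} Assume $f_n \to f$ in measure and $\int \Phi(f_n) \to \int \Phi(f)$. I would verify mean convergence by a Brezis--Lieb/Scheffé-type argument. Convexity gives
\begin{equation*}
\Phi\Bigl(\frac{f_n - f}{2}\Bigr) \le \frac{\Phi(f_n) + \Phi(f)}{2},
\end{equation*}
so the nonnegative functions $g_n \coloneqq \tfrac12(\Phi(f_n) + \Phi(f)) - \Phi\bigl(\tfrac{f_n-f}{2}\bigr)$ converge to $\Phi(f)$ in measure. Applying Fatou along a.e.-convergent subsequences yields $\limsup_n \int \Phi\bigl(\tfrac{f_n-f}{2}\bigr)\, \mathrm{d}\mu \le 0$. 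This is mean convergence of $(f_n - f)/2$, and the converse direction applied to $(f_n-f)/2$ (the $\lambda$ argument above covers the factor $2$) finishes the proof. The subsequence-of-subsequence principle handles the passage from a.e.\ convergence back to convergence in measure throughout.
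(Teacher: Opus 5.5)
Your proposal is correct and essentially follows the paper's argument: you reduce to the Luxemburg norm, use convexity together with the bound $\mu(|f_n-f|>\delta)\le 1/\Phi(\delta/a_n)$ for the forward direction, and for the converse you iterate $\Delta_2$ on the set where $|f_n-f|$ is large and use convergence in measure on the rest. The only real variation is in the last assertion, where you apply Fatou to $\tfrac12(\Phi(f_n)+\Phi(f))-\Phi\bigl(\tfrac{f_n-f}{2}\bigr)\ge0$ using convexity alone and then recover the factor $2$ from the converse; the paper instead dominates $\Phi(f_n-f)$ directly by $\tfrac C2(\Phi(f_n)+\Phi(f))+D$ via $\Phi(2t)\le C\Phi(t)+D$, and both routes work, yours simply deferring the use of $\Delta_2$ to the scaling step.
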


\begin{proof}
Put $h_n=f_n-f$.
By equivalence of the Orlicz and Luxemburg norms, we may work with $N_\Phi(h)=\inf\{a>0:\int_X\Phi(h/a)\,\mathrm{d}\mu\le1\}$.
If $N_\Phi(h_n)\to0$, choose $a_n>N_\Phi(h_n)$ with $a_n\to0$.
Then $\int_X\Phi(h_n/a_n)\,\mathrm{d}\mu\le1$.
Convexity of $\Phi$ implies $\int_X\Phi(h_n)\,\mathrm{d}\mu\le a_n$ for all sufficiently large $n$ and thus $\int_X\Phi(h_n)\, \mathrm d \mu \to 0$.
For every $\delta>0$, Chebyshev's inequality gives, for all sufficiently large $n$,
\begin{equation*}
\mu(|h_n|>\delta)\le\frac{1}{\Phi(\delta/a_n)}\to0,
\end{equation*}
and therefore $f_n \to f$ in $\mu$-measure.

Conversely, suppose that $h_n\to0$ in $\mu$-measure and $\int_X\Phi(h_n)\,\mathrm{d}\mu\to0$.
Fix $c>0$.
By monotonicity of $\Phi$ and iteration of the $\Delta_2$-inequality, there exist $R>0$ and $C>0$ such that $\Phi(ct)\le C\Phi(t)$ for $|t|\ge R$.
For $0<\delta<R$, splitting the integral at $\delta$ and $R$ gives
\begin{equation*}
\int_X\Phi(ch_n)\,\mathrm{d}\mu\le \mu(X)\Phi(c\delta)+\Phi(cR)\mu(|h_n|>\delta)+C\int_X\Phi(h_n)\,\mathrm{d}\mu.
\end{equation*}
Letting first $n\to\infty$ and then $\delta\downarrow0$ yields $\int_X\Phi(ch_n)\,\mathrm{d}\mu\to0$.
Since $c>0$ was arbitrary, the definition of $N_\Phi$ gives $N_\Phi(h_n)\to0$.

For the last assertion, the $\Delta_2$-condition and continuity of $\Phi$ give constants $C>0$ and $D\ge0$ such that $\Phi(2t)\le C\Phi(t)+D$ for every $t\in\re$.
By convexity,
\begin{equation*}
0\le\Phi(h_n)\le \tfrac12\big(\Phi(2f_n)+\Phi(2f)\big)\le \tfrac C2\big(\Phi(f_n)+\Phi(f)\big)+D\eqqcolon w_n.
\end{equation*}
All these functions are integrable since $L^\Phi(\mu)=M^\Phi(\mu)$.
Moreover, as $f_n \to f$ in $\mu$-measure, every subsequence has a further subsequence along which $f_n\to f$ almost everywhere.
Along such a subsequence, retaining the index $n$, we have $w_n\to w\coloneqq C\Phi(f)+D$ almost everywhere and $\int_Xw_n\,\mathrm{d}\mu\to\int_Xw\,\mathrm{d}\mu$.
Fatou's lemma applied to $w_n-\Phi(h_n)\ge0$ gives
\begin{equation*}
\int_Xw\,\mathrm{d}\mu\le\liminf_{n\to\infty}\int_X\big(w_n-\Phi(h_n)\big)\,\mathrm{d}\mu=\int_Xw\,\mathrm{d}\mu-\limsup_{n\to\infty}\int_X\Phi(h_n)\,\mathrm{d}\mu,
\end{equation*}
and thus $\limsup_{n\to\infty}\int_X\Phi(h_n)\,\mathrm{d}\mu\leq 0$.
Since $\Phi(h_n)\geq 0$, we have $\int_X\Phi(h_n)\,\mathrm{d}\mu\to0$ along this subsequence, and hence along the original sequence.
\end{proof}

\begin{remark}
Convergence in $\mu$-measure cannot in general be omitted.
Indeed, for the Young function $\Phi(x)=(|x|-1)_+$ and $(X,\mathcal{A},\mu) = (\{0\},2^{\{0\}},\delta_0)$, taking the constant sequence $f_n\equiv1$ and $f\equiv0$ yields $\int_X \Phi(f_n-f)\,\mathrm{d}\mu=0$ for all $n$, whereas $\|f_n-f\|_\Phi=1$ for all $n$.
\end{remark}

Recall that for $f \in L^2(\mu)$ and a sub-$\sigma$-algebra $\tilde{\mathcal{A}}\subset\mathcal{A}$, the conditional expectation $\mathbb{E}_\mu[f\, |\, \tilde{\mathcal{A}}]$ is the orthogonal projection in $L^2(\mu)$ of $f$ onto the closed subspace of all $g \in L^2(\mu)$ that are $\tilde{\mathcal{A}}$-measurable \citep[Chapter~27]{schilling17measures}.
This operator extends to $L^p(\mu)$ for every $p \in [1,\infty]$, mapping $L^p(\mu)$ onto the subspace of all $g \in L^p(\mu)$ that are $\tilde{\mathcal{A}}$-measurable \citep[Theorem~27.5]{schilling17measures}. 
In particular, Lemma~\ref{lem:orlicz-l1} implies that $\mathbb{E}_\mu[f\, |\, \tilde{\mathcal{A}}]$ is well defined for every Young function $\Phi$ and every $f \in L^\Phi(\mu)$.
Using Lemma~\ref{lem:orlicz-mean-convergence}, we obtain the following convergence theorem in Orlicz spaces.
For convergence results in Orlicz spaces involving nets of conditioning $\sigma$-rings, see \citet{krickeberg64conditional}.
\begin{theorem}\label{thm:orlicz-conditional-expectations}
Let $(\mathcal{A}_n)_{n \in \na}$ be an increasing sequence of sub-$\sigma$-algebras of $\mathcal{A}$, and let $\mathcal{A}_\infty \coloneqq \bigvee_{n \in \na} \mathcal{A}_n$.
If $\Phi \in \Delta_2$, then for every $g \in L^\Phi(\mu)$, we have
\begin{equation*}
\mathbb{E}_\mu[g\, |\, \mathcal{A}_n] \to \mathbb{E}_\mu[g\, |\, \mathcal{A}_\infty], \quad n \to \infty,
\end{equation*}
where convergence holds $\mu$-almost everywhere and in $L^\Phi(\mu)$.
\end{theorem}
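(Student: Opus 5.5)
The proof reduces to the classical martingale convergence theorem, with Lemma~\ref{lem:orlicz-mean-convergence} upgrading almost everywhere convergence to Orlicz norm convergence. Since $\mu$ is finite and nonzero, we may normalize to the probability measure $\PP=\mu/\mu(X)$. Conditional expectations are unchanged by this normalization, and $L^\Phi(\mu)$-convergence is equivalent to $L^\Phi(\PP)$-convergence because the Luxemburg norms differ by at most a constant factor. By Lemma~\ref{lem:orlicz-l1}, $g\in L^1(\PP)$. Hence $g_n\coloneqq\mathbb{E}_\mu[g\,|\,\mathcal{A}_n]$ is a uniformly integrable martingale with respect to $(\mathcal{A}_n)$. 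Lévy's upward theorem gives $g_n\to g_\infty\coloneqq\mathbb{E}_\mu[g\,|\,\mathcal{A}_\infty]$ almost everywhere and in $L^1$. Almost everywhere convergence on a finite measure space also yields convergence in $\mu$-measure.

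It remains to check mean convergence $\int_X\Phi(g_n-g_\infty)\,\mathrm{d}\mu\to0$, which by Lemma~\ref{lem:orlicz-mean-convergence} gives norm convergence. We first note that $g_n,g_\infty\in L^\Phi(\mu)$. Since $\Phi$ is convex and finite, the conditional Jensen inequality gives $\Phi(c\,g_n)\le\mathbb{E}_\mu[\Phi(cg)\,|\,\mathcal{A}_n]$ for every $c>0$, and likewise for $g_\infty$. Because $\Phi\in\Delta_2$, we have $L^\Phi=M^\Phi$, so $\Phi(cg)$ is integrable for every $c$. Therefore $g_n,g_\infty\in M^\Phi(\mu)$.

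For the mean convergence itself we use the inequality from the proof of Lemma~\ref{lem:orlicz-mean-convergence}:
\begin{equation*}
0\le\Phi(g_n-g_\infty)\le\tfrac12\bigl(\Phi(2g_n)+\Phi(2g_\infty)\bigr)\le\tfrac12\bigl(\mathbb{E}_\mu[\Phi(2g)\,|\,\mathcal{A}_n]+\mathbb{E}_\mu[\Phi(2g)\,|\,\mathcal{A}_\infty]\bigr).
\end{equation*}
The right-hand side is a family of conditional expectations of the single integrable function $\Phi(2g)$, so it is uniformly integrable. The left-hand side tends to $0$ almost everywhere, by continuity of $\Phi$. Vitali's convergence theorem then gives $\int_X\Phi(g_n-g_\infty)\,\mathrm{d}\mu\to0$. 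Alternatively, we could note that the dominating sequence $\tfrac12\mathbb{E}_\mu[\Phi(2g)\,|\,\mathcal{A}_n]+\tfrac12\mathbb{E}_\mu[\Phi(2g)\,|\,\mathcal{A}_\infty]$ converges almost everywhere and in $L^1$ by Lévy's theorem again, and apply the generalized dominated convergence theorem.

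The main obstacle is this uniform integrability step. The $\Delta_2$ condition enters it only through the integrability of $\Phi(2g)$; the rest is the standard martingale convergence argument.
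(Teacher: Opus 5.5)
Your proof is correct. It shares the paper's skeleton: martingale convergence a.e.\ and in $L^1$, conditional Jensen to show the conditional expectations lie in $L^\Phi(\mu)$, then Lemma~\ref{lem:orlicz-mean-convergence}. The key step is handled differently, however.

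The paper never estimates $\Phi(M_n-M_\infty)$ directly. It uses the tower property $M_n=\mathbb{E}_\mu[M_\infty\,|\,\mathcal{A}_n]$ and conditional Jensen to get $\int_X\Phi(M_n)\,\mathrm{d}\mu\le\int_X\Phi(M_\infty)\,\mathrm{d}\mu$. Combined with Fatou's lemma, this sandwich yields $\int_X\Phi(M_n)\,\mathrm{d}\mu\to\int_X\Phi(M_\infty)\,\mathrm{d}\mu$. The paper then invokes the \emph{last} assertion of Lemma~\ref{lem:orlicz-mean-convergence}, whose proof supplies the domination: the convexity bound, the $\Delta_2$ estimate $\Phi(2t)\le C\Phi(t)+D$, and a Fatou argument applied to $w_n-\Phi(h_n)$.

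You instead prove mean convergence $\int_X\Phi(g_n-g_\infty)\,\mathrm{d}\mu\to0$ yourself and use only the first equivalence of the lemma. You dominate $\Phi(2g_n)$ and $\Phi(2g_\infty)$, via conditional Jensen, by conditional expectations of the single integrable function $\Phi(2g)$. This gives a uniformly integrable dominating family, or, in your second variant, dominating functions whose integrals all equal $\int_X\Phi(2g)\,\mathrm{d}\mu$. So $\Delta_2$ enters only through $\Phi(2g)\in L^1(\mu)$, and the growth constant $C$ never appears.

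The paper's route is slightly more economical in tools, needing only Fatou and Jensen with no uniform integrability or Vitali. It also passes through a reusable criterion: convergence in measure plus convergence of $\int\Phi(f_n)$. Your route is more self-contained and makes transparent exactly where $\Delta_2$ is used. Your normalization to a probability measure is harmless but unnecessary, since the lemma is stated for any finite $\mu$.
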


\begin{proof}
Let $M_n = \mathbb{E}_\mu[g\, |\, \mathcal{A}_n]$ for $n \in \na \cup \{\infty\}$.
Since $\Phi \in \Delta_2$, we have $L^\Phi(\mu)=M^\Phi(\mu)$ and hence $\Phi(g)\in L^1(\mu)$.
By conditional Jensen's inequality \citep[Theorem~27.16]{schilling17measures}, for every $n \in \na \cup \{\infty\}$,
\begin{equation*}
\int_X \Phi(M_n)\,\mathrm{d}\mu\le \int_X \mathbb{E}_\mu[\Phi(g)\, |\, \mathcal{A}_n]\,\mathrm{d}\mu= \int_X \Phi(g)\,\mathrm{d}\mu < \infty.
\end{equation*}
Thus $M_n\in L^\Phi(\mu)$ for every $n \in \na \cup \{\infty\}$.

The convergence theorem for conditional expectations \citep[Theorem~27.19(i)]{schilling17measures} gives $M_n\to M_\infty$ both $\mu$-almost everywhere and in $L^1(\mu)$.
By the tower property, $M_n=\mathbb{E}_\mu[M_\infty\, |\, \mathcal{A}_n]$ for every $n\in\na$.
Another application of conditional Jensen's inequality gives
\begin{equation*}
\int_X \Phi(M_n)\,\mathrm{d}\mu\le \int_X \mathbb{E}_\mu[\Phi(M_\infty)\, |\, \mathcal{A}_n]\,\mathrm{d}\mu = \int_X \Phi(M_\infty)\,\mathrm{d}\mu.
\end{equation*}
Since $\Phi$ is $\replus$-valued and convex, it is continuous, hence $\Phi(M_n)\to\Phi(M_\infty)$, $\mu$-almost everywhere.
Fatou's lemma now yields
\begin{equation*}
\int_X \Phi(M_\infty)\,\mathrm{d}\mu\le \liminf_{n\to\infty}\int_X \Phi(M_n)\,\mathrm{d}\mu \le \limsup_{n\to\infty}\int_X \Phi(M_n)\,\mathrm{d}\mu\le \int_X \Phi(M_\infty)\,\mathrm{d}\mu.
\end{equation*}
Hence $\int_X\Phi(M_n)\,\mathrm{d}\mu\to\int_X\Phi(M_\infty)\,\mathrm{d}\mu$.
Since $\mu$ is finite, almost-everywhere convergence also gives convergence in $\mu$-measure.
Norm convergence follows from the last assertion of Lemma~\ref{lem:orlicz-mean-convergence}.
\end{proof}

We recall the dual representation of Orlicz hearts; see \citet[Theorems~4.1.6--4.1.7 and Corollary~4.1.9]{rao91orlicz}.
\begin{theorem}\label{thm:orlicz-duality}
Let $(\Phi,\Psi)$ be a complementary pair of Young functions, and assume that $\Phi$ is $\replus$-valued and $\Phi(x)=0$ if and only if $x=0$.
To each $F\in(M^\Phi(\mu))^\ast$ there corresponds a unique $g\in L^\Psi(\mu)$ such that
\begin{equation*}
F(f)=\int_X fg\,\mathrm{d}\mu, \qquad f\in M^\Phi(\mu).
\end{equation*}
Conversely, each $g\in L^\Psi(\mu)$ defines a continuous linear functional by this formula.
Moreover, $\|F\|_{\mathrm{op}}=N_\Psi(g)$.
Thus, $(M^\Phi(\mu))^\ast$ is isometrically isomorphic to $(L^\Psi(\mu),N_\Psi)$. 
If $\Phi\in\Delta_2$, the same representation and norm identity hold with $L^\Phi(\mu)$ in place of $M^\Phi(\mu)$.
\end{theorem}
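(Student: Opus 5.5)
This is the classical duality of Orlicz hearts, so I would follow the standard Radon--Nikodym route.

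The converse direction comes first. For $g\in L^\Psi(\mu)$, Young's inequality $|fg|\le\Phi(f/a)+\Psi(g/b)\cdot$ (after scaling) gives the Hölder-type bound $\int_X|fg|\,\mathrm{d}\mu\le \|f\|_\Phi N_\Psi(g)$; it can also be read off directly from the definition of the Orlicz norm after normalising $g$ so that $g/N_\Psi(g)\in\mathcal{A}^\Psi$. Hence $f\mapsto\int fg\,\mathrm{d}\mu$ is a bounded linear functional of norm at most $N_\Psi(g)$.

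For representation, fix $F\in(M^\Phi(\mu))^\ast$. Since $\mu$ is finite and $\Phi$ is $\replus$-valued, every indicator $\indicatorset{A}$ with $A\in\mathcal{A}$ lies in $M^\Phi(\mu)$. Set $\nu(A)\coloneqq F(\indicatorset{A})$.
\begin{itemize}
\item $\nu$ is finitely additive by linearity.
\item $\nu$ is countably additive. If $A_k\downarrow\emptyset$, then $\int\Phi(\alpha\indicatorset{A_k})\,\mathrm{d}\mu=\Phi(\alpha)\mu(A_k)\to0$ for every $\alpha$, so $N_\Phi(\indicatorset{A_k})\to0$ and $F(\indicatorset{A_k})\to0$.
\item $\nu\ll\mu$, since $\mu(A)=0$ forces $\indicatorset{A}=0$ in $M^\Phi(\mu)$.
\end{itemize}
Radon--Nikodym then yields $g\in L^1(\mu)$ with $F(\indicatorset{A})=\int_A g\,\mathrm{d}\mu$. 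By linearity $F(f)=\int fg\,\mathrm{d}\mu$ for all simple $f$.

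The main obstacle is showing $g\in L^\Psi(\mu)$ with $N_\Psi(g)\le\|F\|_{\mathrm{op}}$, and then extending the formula to all of $M^\Phi(\mu)$. I would truncate: set $g_k\coloneqq g\indicatorset{\{|g|\le k\}}$, which is bounded and hence in $L^\Psi$. For bounded simple $f$,
\[
\Bigl|\int f g_k\,\mathrm{d}\mu\Bigr|=\bigl|F\bigl(f\indicatorset{\{|g|\le k\}}\bigr)\bigr|\le\|F\|_{\mathrm{op}}\|f\|_\Phi .
\]
Simple functions approximate bounded measurable functions uniformly, and uniform convergence on a finite measure space implies $\|\cdot\|_\Phi$-convergence. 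The bound therefore holds for all bounded $f$. Using the norming (dual) formula for $N_\Psi$ via the Orlicz norm $\|\cdot\|_\Phi$ on bounded functions, which is the definition-level identity $N_\Psi(h)=\sup\{\int|fh|:\|f\|_\Phi\le1\}$ from \citet{rao91orlicz}, I get $N_\Psi(g_k)\le\|F\|_{\mathrm{op}}$. Here $|g_k|\uparrow|g|$, so monotone convergence in the Luxemburg functional (the Fatou property) gives $N_\Psi(g)\le\|F\|_{\mathrm{op}}$. Together with the converse bound this gives equality.

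To extend, the hypothesis $\Phi(x)=0\iff x=0$ together with finiteness ensures that bounded (indeed simple) functions are dense in $M^\Phi(\mu)$. For $f\in M^\Phi$, the truncations $f\indicatorset{\{|f|\le k\}}$ converge in norm by dominated convergence applied to $\int\Phi(\alpha f\indicatorset{\{|f|>k\}})\,\mathrm{d}\mu$ for each $\alpha$. Both $F$ and $f\mapsto\int fg\,\mathrm{d}\mu$ are continuous, so they agree everywhere. Uniqueness follows by testing on indicators.

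If $\Phi\in\Delta_2$, then $L^\Phi(\mu)=M^\Phi(\mu)$, as recalled before Lemma~\ref{lem:orlicz-l1}, and the last statement follows immediately.
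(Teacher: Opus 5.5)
The paper gives no proof of this statement. It quotes Rao and Ren (Theorems 4.1.6--4.1.7 and Corollary 4.1.9), which cover general measure spaces, where the hypothesis $\Phi(x)=0\iff x=0$ genuinely matters. You instead reconstruct the standard Radon--Nikodym proof for finite $\mu$, which is all the paper needs, since the result is only used in Theorem~\ref{thm:orlicz-approximation}. Most of the reconstruction is sound:
\begin{itemize}
\item The H\"older bound $\int_X|fg|\,\mathrm{d}\mu\le\|f\|_\Phi N_\Psi(g)$ does follow from the definition, once you note $\int_X\Psi(g/N_\Psi(g))\,\mathrm{d}\mu\le1$ by lower semicontinuity of $\Psi$ and monotone convergence.
\item Countable additivity of $\nu$ and $\nu\ll\mu$ are correct.
\item The truncation, Fatou-property and density steps go through.
\end{itemize}
Incidentally, none of these steps uses $\Phi(x)=0\iff x=0$. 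For finite $\mu$ and $\replus$-valued $\Phi$, truncation plus dominated convergence already gives density of bounded functions in $M^\Phi(\mu)$.

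The step that needs repair is your claim that $N_\Psi(h)=\sup\{\int_X|fh|\,\mathrm{d}\mu:\|f\|_\Phi\le1\}$ is a definition-level identity. What is definitional is the reverse pairing, $\|f\|_\Phi=\sup\{\int_X|fh|\,\mathrm{d}\mu:N_\Psi(h)\le1\}$. From it you only get the inequality $\ge$, which is the H\"older bound you already used for $\|F\|_{\mathrm{op}}\le N_\Psi(g)$.

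The inequality $\le$ is the associate-norm (K\"othe) duality between the Orlicz norm of $\Phi$ and the Luxemburg norm of $\Psi$. Applied to the bounded $g_k$, it is exactly the nontrivial half $N_\Psi(g)\le\|F\|_{\mathrm{op}}$ of the isometry, so your proof of the norm identity rests entirely on this lemma. It is true and may be cited as a separate theorem, but it is not free.

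Because only $\Phi$ is assumed finite, $\Psi$ may take the value $+\infty$; for example $\Phi(x)=|x|$ gives $N_\Psi=\|\cdot\|_\infty$. Then $\int_X\Psi(h/N_\Psi(h))\,\mathrm{d}\mu<1$ can occur, and the usual test function $f=\Psi'_+(|h|/N_\Psi(h))$ with Young's equality breaks down without case distinctions. On a finite measure space, a clean proof uses the bipolar theorem in the pairing of $L^1(\mu)$ and $L^\infty(\mu)$:
\begin{itemize}
\item The set $\{h\in L^\infty(\mu):\int_X\Psi(h)\,\mathrm{d}\mu\le1\}$ is absolutely convex and weak$^\ast$-closed, by Fatou's lemma and Krein--\v{S}mulian.
\item Its polar is the set of $f\in L^1(\mu)$ with $\|f\|_\Phi\le1$, and truncation lets you restrict to bounded $f$.
\item Its gauge at bounded $h$ is $N_\Psi(h)$.
\end{itemize}
With that lemma justified or properly cited, your argument is complete.
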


\begin{assumption}\label{ass:space-activation}
For the remainder of this section, assume that
\begin{enumerate}[label=(\alph*),ref=\ref{ass:space-activation}(\alph*)]
\item \label{itm:generating-family} There is a non-empty family $\Gamma_0\subseteq\Gamma$, closed under pointwise addition and subtraction, such that $\mathcal{A}=\sigma(\Gamma_0)$.
\item \label{itm:sigmoidal-activation} The activation function $\psi \from \re \to \re$ is bounded, Borel-measurable and sigmoidal, meaning that $\lim_{x\to-\infty}\psi(x)=0$ and $\lim_{x\to+\infty}\psi(x)=1$.
\end{enumerate}
\end{assumption}
Assumption~\ref{itm:generating-family} implies $\mathcal{A}=\sigma(\Gamma)$ and includes the choice $\Gamma_0=\Gamma=X^\ast$ and $\mathcal{A}=\sigma(X^\ast)$ on a real locally convex Hausdorff topological vector space.
Neither a topology on $X$ nor separation of its points by $\Gamma$ is required.
The family $\Gamma_0$ need not be closed under multiplication by arbitrary real scalars.
In Assumption~\ref{itm:sigmoidal-activation}, we follow \citet{cybenko89approximation}.

The next proposition extends the discrimination argument of \citet{cybenko89approximation} to the measurable setting of Standing Assumption~\ref{ass:space-activation}.
\begin{proposition}\label{prop:discriminatory-activation}
The activation function $\psi$ is discriminatory: if a finite signed measure $\nu$ on $\mathcal{A}$ satisfies $\int_X\psi(f(x)+\beta)\,\nu(\mathrm{d}x)=0$ for all $f\in\Gamma$ and $\beta\in\re$, then $\nu=0$.
\end{proposition}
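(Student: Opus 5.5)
Following \citet{cybenko89approximation}, I first pass from $\psi$ to indicators of half-lines and then use a uniqueness argument for measures. Fix $f\in\Gamma_0$ and $\beta,\theta\in\re$, and set $\psi_\lambda(x)\coloneqq\psi(\lambda(f(x)+\beta)+\theta)$ for $\lambda>0$. Since $\psi$ is bounded and sigmoidal, $\psi_\lambda(x)$ converges pointwise as $\lambda\to\infty$ to $1$ on $\{f+\beta>0\}$, to $0$ on $\{f+\beta<0\}$, and to $\psi(\theta)$ on $\{f+\beta=0\}$. The hypothesis must apply to $\psi_\lambda$, so it has to cover the scaled function $\lambda f$. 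Because $\Gamma_0$ is closed under addition, every positive integer multiple $kf$ lies in $\Gamma_0\subseteq\Gamma$. I therefore take $\lambda=k\in\na$ and use the biases $k\beta+\theta$. Bounded convergence with respect to $|\nu|$ then gives
\begin{equation*}
\psi(\theta)\,\nu(\{f=-\beta\})+\nu(\{f>-\beta\})=0\qquad\text{for all }\theta\in\re.
\end{equation*}
Letting $\theta\to-\infty$ yields $\nu(\{f>-\beta\})=0$. Letting $\theta\to+\infty$ then yields $\nu(\{f=-\beta\})=0$. Hence $\nu(\{f\ge c\})=0$ and $\nu(\{f>c\})=0$ for all $f\in\Gamma_0$ and $c\in\re$. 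Since $\nu(X)$ is the limit of $\nu(\{f>-n\})$ by continuity of the finite signed measure, we also get $\nu(X)=0$, and so $\nu(\{f\le c\})=0$.

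The main obstacle is passing from these half-spaces to all of $\mathcal{A}=\sigma(\Gamma_0)$. Sets of the form $\{f\le c\}$ for a single $f$ do not form a $\pi$-system, and $\Gamma_0$ is not a vector space. Cybenko used Fourier analysis on $\re^d$ for this step, but here I proceed through images. For fixed $f_1,\dots,f_m\in\Gamma_0$, let $F=(f_1,\dots,f_m)\colon X\to\re^m$ and let $\nu_F=\nu\circ F^{-1}$, a finite signed Borel measure on $\re^m$. I must show $\nu_F=0$. By the Cramér--Wold/Fourier uniqueness theorem, it suffices that $\int e^{i\langle u,y\rangle}\,\nu_F(\mathrm{d}y)=0$ for all $u$. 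For $u\in\mathbb{Z}^m$, the combination $\sum u_jf_j$ lies in $\Gamma_0$, since $\Gamma_0$ is closed under addition and subtraction and contains $0=f-f$. The half-line conclusion above says that the law of $g=\sum u_jf_j$ under $\nu$ vanishes. So the Fourier transform of $\nu_F$ vanishes on $\mathbb{Z}^m$, and by the scaling argument with $k\in\na$ also on $\mathbb{Q}^m$. This is where the restricted closure of $\Gamma_0$ causes trouble, and I will handle it as follows. The one-dimensional law of $\sum u_jf_j$ vanishes, so $\widehat{\nu_F}(tu)=0$ for all real $t$. Because rays through integer points $u$ are dense in directions, continuity of $\widehat{\nu_F}$ in $u$ gives $\widehat{\nu_F}\equiv0$. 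Hence $\nu_F=0$.

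Finally, the sets $F^{-1}(B)$ with $m\in\na$, $f_j\in\Gamma_0$, and $B\in\B_{\re^m}$ form an algebra, and in particular a $\pi$-system, that generates $\sigma(\Gamma_0)=\mathcal{A}$. I will apply Dynkin's $\pi$–$\lambda$ theorem to the Jordan parts $\nu^+$ and $\nu^-$, which are finite and agree on this $\pi$-system and on $X$. This gives $\nu^+=\nu^-$ on $\mathcal{A}$, that is, $\nu=0$.
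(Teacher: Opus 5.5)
Your proof is correct and follows essentially the same route as the paper. Integer multiples $kf\in\Gamma_0$ and a bounded-convergence limit kill the half-lines. Integer combinations $\sum_j u_jf_j\in\Gamma_0$ then make the Fourier transform of $\nu\circ F^{-1}$ vanish on a dense set: the paper uses $\mathbb{Q}^m$, and your rays through $\mathbb{Z}^m$ contain it. A $\pi$-system argument on finite-dimensional cylinder sets concludes.

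The differences are cosmetic. You derive $\nu(X)=0$ from the half-line identities rather than by taking $f=0$, and your extra step showing $\nu(\{f=c\})=0$ is not needed.
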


\begin{proof}
Let $\nu$ be a finite signed measure on $\mathcal A$ that satisfies $\int_X\psi(f(x)+\beta)\,\nu(\mathrm{d}x)=0$ for all $f\in\Gamma$ and $\beta\in\re$.
Since $0\in\Gamma_0$ and $\psi$ is not identically zero, taking $f=0$ gives $\nu(X)=0$.
Fix $f\in\Gamma_0$ and $\theta,\gamma\in\re$.
For every $n\in\na$, we have $nf\in\Gamma_0\subseteq\Gamma$.
Taking $\beta=\gamma-n\theta$ and applying dominated convergence with respect to $|\nu|$ gives
\begin{equation*}
0=\lim_{n\to\infty}\int_X\psi\bigl(n(f(x)-\theta)+\gamma\bigr)\,\nu(\mathrm{d}x)=\nu\{f>\theta\}+\psi(\gamma)\nu\{f=\theta\}.
\end{equation*}
Letting $\gamma\to-\infty$ yields $\nu\{f>\theta\}=0$.
Let $\eta\coloneqq\nu\circ f^{-1}$.
Then $\eta((\theta,\infty))=0$ for every $\theta\in\re$.
The upper half-lines form an intersection-stable family generating $\B_{\re}$.
Since $\eta(\re)=\nu(X)=0$, by
\citet[Theorem~1.9.3(ii)]{bogachev07measure1}, we have $\eta=0$.
Consequently,
\begin{equation}\label{eq:vanishing-fourier-transform}
\int_X\exp(\mathrm{i}t f(x))\,\nu(\mathrm{d}x)=0,\qquad f\in\Gamma_0,\quad t\in\re.
\end{equation}

Fix $m\in\na$ and $f_1,\ldots,f_m\in\Gamma_0$.
Write $F=(f_1,\ldots,f_m)$ and let $\eta=\nu\circ F^{-1}$.
For $q\in\mathbb{Q}^m$, choose $N\in\na$ and $k_1,\ldots,k_m\in\mathbb{Z}$ such that $q_j=k_j/N$.
Then $h=\sum_{j=1}^m k_jf_j\in\Gamma_0$, so \eqref{eq:vanishing-fourier-transform} implies
\begin{equation*}
\widehat{\eta}(q) =\int_X\exp\Bigl(\mathrm{i}\sum_{j=1}^m q_jf_j(x)\Bigr) \,\nu(\mathrm{d}x) =\int_X\exp\bigl(\mathrm{i}h(x)/N\bigr)\,\nu(\mathrm{d}x)=0.
\end{equation*}
By dominated convergence, $\widehat{\eta}$ is continuous, hence it vanishes on $\re^m$.
The characteristic functions of the positive and negative parts $\eta^+$ and $\eta^-$ therefore agree.
By \citet[Lemma~7.13.5]{bogachev07measure2}, we have $\eta^+=\eta^-$, and thus $\eta=0$.
It follows that $\nu$ vanishes on every set $F^{-1}(B)$ with $B\in\B_{\re^m}$.
These finite-dimensional cylinder sets form an intersection-stable family generating $\sigma(\Gamma_0)=\mathcal{A}$.
Hence, by \citet[Theorem~1.9.3(ii)]{bogachev07measure1}, we have $\nu=0$.
\end{proof}

The following theorem extends Theorem~1 of \citet{hornik91approximation} in two directions.
First, we pass from $L^p$-spaces to Orlicz hearts and, under the $\Delta_2$-condition, to Orlicz spaces.
Second, we replace the Euclidean input space by the general measurable setting introduced above.
Recall that $\mu \neq 0$ is a finite measure on $\mathcal{A} = \sigma(\Gamma_0) = \sigma(\Gamma)$, and let $\Psi$ be the Young function complementary to $\Phi$.

\begin{theorem}\label{thm:orlicz-approximation}
Let $\Phi$ be an $\replus$-valued Young function.
Then the set $\mathcal{NN}(\Gamma, \psi)$ of neural networks is dense in the Orlicz heart $M^\Phi(\mu)$ with respect to $\|\cdot\|_\Phi$.
Moreover, if $\Phi \in \Delta_2$, then $\mathcal{NN}(\Gamma, \psi)$ is dense in the Orlicz space $L^\Phi(\mu)$.
\end{theorem}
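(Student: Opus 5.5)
The argument is the classical Hahn--Banach/duality argument of \citet{cybenko89approximation} and \citet{hornik91approximation}, with Theorem~\ref{thm:orlicz-duality} supplying the dual space and Proposition~\ref{prop:discriminatory-activation} supplying the vanishing step.

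\emph{Step 1: networks lie in the heart.} Since $\psi$ is bounded, every $g\in\mathcal{NN}(\Gamma,\psi)$ is bounded and $\mathcal{A}$-measurable. Because $\mu$ is finite and $\Phi$ is $\replus$-valued, $\int_X\Phi(\alpha g)\,\mathrm{d}\mu\le\Phi(\alpha\|g\|_\infty)\mu(X)<\infty$ for every $\alpha>0$. Hence $\mathcal{NN}(\Gamma,\psi)\subseteq M^\Phi(\mu)$, and its closure $S$ is a closed linear subspace of $M^\Phi(\mu)$.

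\emph{Step 2: separation.} Suppose $S\neq M^\Phi(\mu)$. By the Hahn--Banach theorem there is a nonzero $F\in(M^\Phi(\mu))^\ast$ vanishing on $S$. To invoke Theorem~\ref{thm:orlicz-duality} I need $\Phi(x)=0$ only for $x=0$. I handle this by a reduction rather than a direct check.
\begin{itemize}
\item If $\Phi$ vanishes on some interval $[0,a]$ with $a>0$, it grows at least linearly beyond that interval, since it is convex, nontrivial and tends to infinity.
\item So there are $c>0$ and $C\ge0$ with $\tilde\Phi(x):=c|x|\le\Phi(x)+C$. Using $\mu(X)<\infty$, the heart $M^\Phi(\mu)$ and its norm are unchanged if $\Phi$ is replaced by $\Phi+\tilde\Phi$, which is a Young function vanishing only at $0$.
\item Alternatively, observe directly that the representation proof in \citet{rao91orlicz} only uses that simple functions are dense in $M^\Phi(\mu)$ and that $M^\Phi(\mu)\subseteq L^1(\mu)$ (Lemma~\ref{lem:orlicz-l1}).
\end{itemize}
Either way we obtain $h\in L^\Psi(\mu)$ with $F(f)=\int_X fh\,\mathrm{d}\mu$, and $h\neq0$ in $L^\Psi(\mu)$.

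\emph{Step 3: the annihilating measure vanishes.} Since $L^\Psi(\mu)\subseteq L^1(\mu)$ by Lemma~\ref{lem:orlicz-l1}, $\nu(\mathrm{d}x):=h(x)\,\mu(\mathrm{d}x)$ is a finite signed measure on $\mathcal{A}$. The condition $F=0$ on $\mathcal{NN}(\Gamma,\psi)$ says exactly that $\int_X\psi(f(x)+\beta)\,\nu(\mathrm{d}x)=0$ for all $f\in\Gamma$ and $\beta\in\re$. Proposition~\ref{prop:discriminatory-activation} gives $\nu=0$, hence $h=0$ $\mu$-a.e. This contradicts $F\neq0$, so $S=M^\Phi(\mu)$.

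\emph{Step 4: the $\Delta_2$ case.} If $\Phi\in\Delta_2$, then $L^\Phi(\mu)=M^\Phi(\mu)$ with the same norm, and density in $L^\Phi(\mu)$ follows immediately.

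\emph{Main obstacle.} The delicate point is Step 2, namely justifying the integral representation of $F$ when $\Phi$ may vanish near $0$. If the renorming trick fails, I would fall back on a direct proof that every continuous functional $F$ on $M^\Phi(\mu)$ arises from an integrable $h$. For this, set $\lambda(A):=F(\indicatorset{A})$. This $\lambda$ is countably additive and absolutely continuous with respect to $\mu$, because $\|\indicatorset{A_n}\|_\Phi\to0$ whenever $\mu(A_n)\to0$; here finiteness of $\Phi$ is used. Take $h$ to be the Radon--Nikodym derivative $\mathrm{d}\lambda/\mathrm{d}\mu$. The identity $F(f)=\int_X fh\,\mathrm{d}\mu$ then extends from simple functions to bounded ones, and bounded functions are all that Step 3 requires. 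This fallback bypasses Theorem~\ref{thm:orlicz-duality} entirely, so the argument does not hinge on its hypothesis.
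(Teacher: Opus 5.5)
Your proof is correct and follows the paper's route: Hahn--Banach, then the representation of $F$ via Theorem~\ref{thm:orlicz-duality} after adding a linear term to $\Phi$, then Proposition~\ref{prop:discriminatory-activation} applied to $h\,\mathrm{d}\mu$. The paper simply uses $\widetilde\Phi(t)=\Phi(t)+|t|$ without a case split and cites Lemma~\ref{lem:orlicz-l1} for $M^{\widetilde\Phi}(\mu)=M^\Phi(\mu)$; note that this modification leaves the norm equivalent rather than unchanged, and the direction you actually need, $\|\cdot\|_\Phi\le\|\cdot\|_{\Phi+c|\cdot|}$, holds because the complementary Young function decreases, so $F$ stays continuous for the new norm.
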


\begin{proof}
Since every Young function is even and nondecreasing on $\replus$, for every
$f \in L^\infty(\mu)$ and $\alpha > 0$,
\begin{equation*}
\int_X \Phi(\alpha f)\,\mathrm{d}\mu
\le \Phi(\alpha\|f\|_\infty)\mu(X) < \infty.
\end{equation*}
Thus $L^\infty(\mu) \subset M^\Phi(\mu)$.
Since $\psi$ is bounded, $\mathcal{NN}(\Gamma,\psi) \subset L^\infty(\mu)$ and therefore $\mathcal{NN}(\Gamma,\psi) \subset M^\Phi(\mu)$.
If $\mathcal{NN}(\Gamma,\psi)$ were not dense in $M^\Phi(\mu)$, the Hahn--Banach theorem \citep[Theorem~5.19]{rudin87analysis} would yield a nonzero $F \in (M^\Phi(\mu))^\ast$ that vanishes on $\mathcal{NN}(\Gamma,\psi)$.

To apply Theorem~\ref{thm:orlicz-duality}, set $\widetilde{\Phi}(t) \coloneqq \Phi(t)+|t|$ for $t \in \re$, and let $\widetilde{\Psi}$ be its complementary Young function.
By Lemma~\ref{lem:orlicz-l1}, every element of $M^\Phi(\mu)$ is integrable, hence $M^{\smash[t]{\widetilde{\Phi}}}(\mu)=M^\Phi(\mu)$.
Since $\widetilde{\Psi} \le \Psi$, the definition of the Orlicz norm gives $\|h\|_\Phi \le \|h\|_{\widetilde{\Phi}}$ for every $h \in M^\Phi(\mu)$.
Since $F$ is linear and continuous with respect to $\|\cdot\|_\Phi$, it is bounded, so there exists $C>0$ such that $|F(h)| \le C\|h\|_\Phi$ for all $h \in M^\Phi(\mu)$.
Hence $|F(h)| \le C\|h\|_{\widetilde{\Phi}}$ for all $h \in M^{\widetilde{\Phi}}(\mu)$, and therefore $F$ is continuous with respect to $\|\cdot\|_{\widetilde{\Phi}}$, which implies $F \in (M^{\smash[t]{\widetilde{\Phi}}}(\mu))^\ast$.
The function $\widetilde{\Phi}$ is $\replus$-valued and vanishes only at zero, so Theorem~\ref{thm:orlicz-duality} yields $g \in L^{\smash[t]{\widetilde{\Psi}}}(\mu)$ such that
\begin{equation*}
F(h)=\int_X hg\,\mathrm{d}\mu,
\qquad h \in M^\Phi(\mu).
\end{equation*}
By Lemma~\ref{lem:orlicz-l1}, we have $g \in L^1(\mu)$.
Consequently, $\nu(A) \coloneqq \int_A g\,\mathrm{d}\mu$, $A \in \mathcal{A}$, defines a finite signed measure with $|\nu|(X)=\int_X |g|\,\mathrm{d}\mu<\infty$.
Since $F$ vanishes on $\mathcal{NN}(\Gamma,\psi)$,
\begin{equation*}
\int_X \psi\bigl(f(x)+\beta\bigr)\,\nu(\mathrm{d}x)=0,
\quad f \in \Gamma,\,\beta \in \re.
\end{equation*}
Proposition~\ref{prop:discriminatory-activation} implies that $\nu=0$, hence $F=0$, a contradiction.

Finally, if $\Phi \in \Delta_2$, then $L^\Phi(\mu)=M^\Phi(\mu)$, and $\mathcal{NN}(\Gamma,\psi)$ is dense in $L^\Phi(\mu)$ as well.
\end{proof}

\section{Neural network approximation of measurable random variables}\label{sec:random-variable-approximation}
For the remainder of this paper, let $(\Omega, \Fcal, \PP)$ denote a complete probability space.
Let $Y = (Y_t)_{t \ge 0}$ be an $\re^{d}$-valued stochastic process for some dimension $d \in \na$.
In analogy to \citet{buehler19hedging}, the process $Y$ represents the observed market information.
We denote by $\FF^0 = (\Fcal_t^0)_{t \ge 0}$ the natural filtration generated by $Y$, i.e.\ $\Fcal_t^0 = \sigma(Y_s \colon 0 \le s \le t)$ for $t \in \replus$, and set $\Fcal_\infty^0 = \sigma(\bigcup_{t \ge 0} \Fcal_t^0) = \sigma(Y_s \colon s \in \replus)$.
As it is common in the literature to work under the usual hypotheses, we therefore consider the augmentation of $\FF^0$ by the $\PP$-null sets.
To this end, let $\mathcal{N}_\PP$ denote the set of all $\PP$-null sets of $\Fcal$.
Let $\FF = (\Fcal_t)_{t \ge 0}$, where $\Fcal_t \coloneqq \sigma(\mathcal{N}_\PP \cup \Fcal_t^0)$ for every $t \in \replus$, and $\Fcal_\infty = \sigma(\mathcal{N}_\PP \cup \Fcal_\infty^0)$.
Then $\FF$ is an augmentation of $\FF^0$, and $\Fcal_0$ contains all $\PP$-null sets of $\Fcal$.
This augmentation need not be right-continuous.

The approximation result of Theorem~\ref{thm:orlicz-approximation} in the preceding section is formulated in terms of abstract random variables.
However, the general framework of Section~\ref{subsec:orlicz-approximation} allows us to study the case when the filtration is generated by an observed stochastic process.
The following theorem shows that, for continuous and for c\`{a}dl\`{a}g processes, measurable random variables can be approximated by neural networks that process the observed path of $Y$.
When $Y$ is a semimartingale, integrating by parts yields an approximation in terms of stochastic integrals of $Y$.

\begin{theorem}\label{thm:path-approximation}
Let $T>0$, suppose that $Y$ has c\`{a}dl\`{a}g paths on $[0,T]$, and let $\psi$ satisfy Assumption~\ref{itm:sigmoidal-activation}.
Let $\Phi$ be an $\replus$-valued Young function and $g\in M^\Phi(\Omega,\Fcal_T,\PP)$.
For $s\in(0,T]$, write $\Delta Y_s=Y_s-Y_{s-}$, and define $\ell^1((0,T])$ as in Example~\ref{ex:cadlag-functions}, with $1$ replaced by $T$.
Then the following assertions hold.

\begin{enumerate}[label=(\alph*)]
\item For every $\varepsilon>0$, there exist $m\in\na$, constants $\gamma_i,b_i\in\re$, finite signed Radon measures $\mu_i^j$ on $[0,T]$, and $h_i^j\in\ell^1((0,T])$ such that
\begin{equation}\label{eq:path-net}
G\coloneqq\sum_{i=1}^m\gamma_i\psi\biggl(\sum_{j=1}^d\int_{[0,T]}Y_s^j\,\mu_i^j(\mathrm{d}s)+J_i+b_i\biggr),\qquad J_i\coloneqq\sum_{j=1}^d\sum_{s\in(0,T]}h_i^j(s)\Delta Y_s^j,
\end{equation}
satisfies $\|G-g\|_\Phi<\varepsilon$.
For continuous $Y$, the jump terms vanish.
If $\Phi\in\Delta_2$, these conclusions hold for every $g\in L^\Phi(\Omega,\Fcal_T,\PP)$.
\item Suppose that $(\Fcal_t)_{0\le t\le T}$ is right-continuous and that $Y$ is an $\FF$-semimartingale on $[0,T]$.
Define the deterministic c\`{a}dl\`{a}g finite-variation processes
$A_t^i\coloneqq-(\mu_i^j([0,t]))_{j=1}^d$ and the $\sigma(Y_T)$-measurable variables $\beta_i\coloneqq b_i-(A_T^i)^\top Y_T$.
Every $G$ in \eqref{eq:path-net} then satisfies
\begin{equation*}
G=\sum_{i=1}^m\gamma_i\psi\biggl(\int_{(0,T]}(A_{s-}^i)^\top\,\mathrm{d}Y_s+J_i+\beta_i\biggr)\quad\PP\text{-almost surely}.
\end{equation*}
\end{enumerate}
\end{theorem}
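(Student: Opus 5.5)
For part (a), the plan is to transport the problem to path space and apply Theorem~\ref{thm:orlicz-approximation} there. Let $\mathcal{D}^d$ denote the space of $\re^d$-valued càdlàg functions on $[0,T]$, with the supremum norm. Take $\Gamma=\Gamma_0$ to be the family of functionals
\begin{equation*}
x\mapsto\sum_{j=1}^d\Bigl(\int_{[0,T]}x^j(s)\,\mu^j(\mathrm{d}s)+\sum_{s\in(0,T]}h^j(s)\Delta x^j(s)\Bigr),
\end{equation*}
where each $\mu^j$ is a finite signed Radon measure and each $h^j\in\ell^1((0,T])$. This family is a vector space, so it is closed under addition and subtraction. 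By Example~\ref{ex:cadlag-functions}, applied coordinatewise and rescaled from $[0,1]$ to $[0,T]$, $\Gamma$ is the dual of $\mathcal{D}_B^d$. Moreover, $\sigma(\Gamma)$ is the Skorokhod Borel $\sigma$-algebra, which coincides with the cylinder $\sigma$-algebra $\sigma(x\mapsto x(t)\colon t\in[0,T])$. Hence Standing Assumption~\ref{ass:space-activation} holds on $(\mathcal{D}^d,\sigma(\Gamma))$. For continuous $Y$, one can work instead on $C([0,T];\re^d)$ with Example~\ref{ex:continuous-functions}; there the jump terms are absent.

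Next I will factor $g$ through the path map. The map $\omega\mapsto Y_\cdot(\omega)$ is $\Fcal_T^0$-$\sigma(\Gamma)$-measurable, since the cylinder $\sigma$-algebra is generated by evaluations. Let $\mu\coloneqq\PP\circ Y^{-1}$. Because $\Fcal_T$ is the $\PP$-completion of $\Fcal_T^0=\sigma(Y_s\colon s\le T)$, the variable $g$ agrees almost surely with an $\Fcal_T^0$-measurable variable. By the Doob--Dynkin lemma, that variable equals $\tilde g(Y)$ for some $\sigma(\Gamma)$-measurable $\tilde g$. The change of variables $\int\Phi(\alpha\tilde g)\,\mathrm{d}\mu=\mathbb{E}[\Phi(\alpha g)]$ gives $\tilde g\in M^\Phi(\mu)$. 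The same identity applied to the functional $\int|\cdot\,h|\,\mathrm{d}\mu$ shows that $\|\tilde G-\tilde g\|_{\Phi,\mu}=\|\tilde G(Y)-g\|_{\Phi,\PP}$. The only step needing care is that $h$ ranges over $\mathcal{A}^\Psi$ on $\Omega$ rather than on path space. Since $\mathbb{E}[\,\cdot\mid\sigma(Y)]$ is a contraction preserving $\int\Psi(\cdot)\le1$ by Jensen, the supremum is unchanged.

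Theorem~\ref{thm:orlicz-approximation} now gives $\tilde G\in\mathcal{NN}(\Gamma,\psi)$ with $\|\tilde G-\tilde g\|_{\Phi,\mu}<\varepsilon$. Evaluating $\tilde G$ at $Y$ yields exactly the expression~\eqref{eq:path-net}. Under $\Delta_2$ we have $L^\Phi=M^\Phi$, so the statement extends to all of $L^\Phi$. I expect the main obstacle to be the identification $\sigma(\mathcal{D}_B^\ast)=\sigma(\text{evaluations})$ for $\re^d$-valued paths. I will get it from the cited result of Pestman for each coordinate, together with the fact that the product of the coordinate Skorokhod $\sigma$-algebras is the cylinder $\sigma$-algebra. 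The Orlicz norm transfer is the second delicate point.

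For part (b), I will apply integration by parts to each term. Fix $i$ and write $A=A^i$. This $A$ is deterministic, càdlàg and of finite variation, with $A_0=-\mu_i([0,0])$, and $A_{0-}=0$ by convention. Because $A$ has finite variation, $[A,Y]_T=\sum_{s\le T}\Delta A_s\,\Delta Y_s$, and integration by parts gives
\begin{equation*}
A_T^\top Y_T=A_0^\top Y_0+\int_{(0,T]}A_{s-}^\top\,\mathrm{d}Y_s+\int_{(0,T]}Y_{s-}^\top\,\mathrm{d}A_s+\sum_{s\le T}\Delta A_s^\top\Delta Y_s.
\end{equation*}
Combining the last two terms gives $\int_{(0,T]}Y_s^\top\,\mathrm{d}A_s=-\sum_j\int_{(0,T]}Y^j_s\,\mu_i^j(\mathrm{d}s)$. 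The term $A_0^\top Y_0$ accounts for the atom at $0$. Hence
\begin{equation*}
\sum_j\int_{[0,T]}Y^j\,\mathrm{d}\mu_i^j=\int_{(0,T]}A_{s-}^\top\,\mathrm{d}Y_s-A_T^\top Y_T
\end{equation*}
almost surely. Substituting this into~\eqref{eq:path-net} and recalling $\beta_i=b_i-(A_T^i)^\top Y_T$ gives the claim. The pathwise Stieltjes integrals coincide with the stochastic ones because $A$ is deterministic and of finite variation.
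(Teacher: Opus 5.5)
Your proposal is correct and follows essentially the same route as the paper: factor $g$ through the path map via Doob--Dynkin, identify $\sigma(E^\ast)$ with the cylinder $\sigma$-algebra (you obtain the $\re^d$-valued case coordinatewise from Pestman's result), apply Theorem~\ref{thm:orlicz-approximation} with $\Gamma_0=\Gamma=E^\ast$ under $\PP\circ Y^{-1}$, and for part~(b) integrate by parts against the deterministic finite-variation process $A^i$. Your conditional-Jensen argument that the Orlicz norm transfers from $\PP\circ Y^{-1}$ to $\PP$ makes explicit a step the paper leaves implicit. In part~(b), the jump sum should run over $s\in(0,T]$, so that $A_0^\top Y_0$ is not counted twice.
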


\begin{proof}
Examples~\ref{ex:continuous-functions} and~\ref{ex:cadlag-functions}, applied coordinatewise and after rescaling time, give precisely the linear functionals appearing inside the activations in part~\textup{(a)}, without the biases.
Set $E=C([0,T];\re^d)$ if $Y$ is continuous and $E=D([0,T];\re^d)$ otherwise, and use the supremum norm to define $E^\ast$ in both cases.
Write $\B_E$ for the uniform-norm Borel $\sigma$-algebra in the first case and the Skorokhod $J_1$ Borel $\sigma$-algebra in the second.
An indistinguishable modification of $Y$ leaves its completed natural filtration unchanged, so we may assume that $Y(\omega) \in E$ for all $\omega \in \Omega$.
For $t\in[0,T]$ and $1\le j\le d$, let $\pi_t^j(x)=x_t^j$ denote the coordinate projections at time $t$.

Suppose first that $E=C([0,T];\re^d)$.
By the finite-dimensional cylinder characterization of the Borel $\sigma$-algebra on $E$ \citep[Problem~2.4.2]{karatzas98calculus}, $\B_E = \sigma\bigl(\pi_t^j \colon t \in [0,T],\ 1 \le j \le d\bigr) = \sigma(E^\ast)$.
Indeed, $\pi_t^j \in E^\ast$ for every $t$ and $j$, whereas every $\ell \in E^\ast$ is continuous and therefore $\B_E$-measurable, which proves the second equality.

Suppose next that $E=D([0,T];\re^d)$.
Here $E^\ast$ denotes the dual with respect to the supremum norm and $\B_E$ the Skorokhod $J_1$ Borel $\sigma$-algebra.
By \citet[Theorem~1]{pestman95measurability}, applied after rescaling $[0,T]$ to $[0,1]$, every $\ell\in E^\ast$ is of the form
\begin{equation*}
\ell(x)=\sum_{j=1}^d\left(\int_{[0,T]}x_s^j\,\mu^j(\mathrm{d}s)+\sum_{s\in(0,T]}h^j(s)\Delta x_s^j\right),
\end{equation*}
where $\mu^j$ is a finite signed Radon measure and $h^j\in\ell^1((0,T])$.
Since $|\Delta x_s^j|\le 2\|x\|_\infty$, each jump series is absolutely convergent.
By \citet[Chapter~3, Proposition~7.1]{ethier86markov}, the Borel $\sigma$-algebra associated with the Skorokhod $J_1$ topology is generated by the coordinate maps, i.e.\ $\B_E=\sigma(\pi_t^j\colon t\in [0,T],\ 1\le j\le d)$.
Moreover, \citet[Theorem~3]{pestman95measurability} yields $\B_E=\sigma(E^\ast)$.
Consequently, in either case,
\begin{equation}\label{eq:path-borel-sigma-algebra}
\sigma(E^\ast)=\sigma\bigl(\pi_t^j\colon t\in [0,T],\ 1\le j\le d\bigr)=\B_E.
\end{equation}

Define the path map $\mathbf{Y}\from\Omega\to E$ by $\mathbf{Y}(\omega)\coloneqq(Y_t(\omega))_{0\le t\le T}$.
By \eqref{eq:path-borel-sigma-algebra},
\begin{equation}\label{eq:path-information}
\sigma(\mathbf{Y})=\sigma\bigl(Y_t^j\colon t\in [0,T],\ 1\le j\le d\bigr)=\sigma\bigl(Y_t\colon 0\le t\le T\bigr)=\Fcal_T^0.
\end{equation}
Since $\Fcal_T=\sigma(\mathcal N_\PP\cup\Fcal_T^0)$, \citet[Lemma~1.27, p.~24]{kallenberg21foundations} yields an $\Fcal_T^0$-measurable random variable $\widetilde g$ such that $\widetilde g=g$ $\PP$-almost surely.
By \eqref{eq:path-information} and the functional representation lemma \citep[Lemma~1.14, p.~18]{kallenberg21foundations}, there exists a $\B_E$-measurable function $f\from E\to\re$ such that $\widetilde g=f(\mathbf{Y})$ and hence $g=f(\mathbf{Y})$ $\PP$-almost surely.
Writing $\nu\coloneqq\PP\circ\mathbf{Y}^{-1}$, we obtain, for every $\alpha>0$,
\begin{equation*}
\int_E\Phi\bigl(\alpha f(x)\bigr)\,\nu(\mathrm dx)=\mathbb{E}\bigl[\Phi(\alpha\widetilde g)\bigr]=\mathbb{E}\bigl[\Phi(\alpha g)\bigr]<\infty,
\end{equation*}
and therefore $f\in M^\Phi(E,\B_E,\nu)$.

By \eqref{eq:path-borel-sigma-algebra}, the family $\Gamma_0=\Gamma=E^\ast$ is closed under addition and subtraction and generates $\B_E$.
Theorem~\ref{thm:orlicz-approximation} therefore yields $\widetilde f\in\mathcal{NN}(E^\ast,\psi)$ such that $\|\widetilde f-f\|_\Phi<\varepsilon/2$ under $\nu$.
The representations of $E^\ast$ above then give \eqref{eq:path-net}, and the assertion follows by setting $G=\widetilde f(\mathbf{Y})$.
If $\Phi\in\Delta_2$, then $L^\Phi=M^\Phi$, and the same argument applies to every $g\in L^\Phi(\Omega,\Fcal_T,\PP)$.

To prove part~\textup{(b)}, fix one measure $\mu=\mu_i^j$ and put $B_t=\mu([0,t])$.
Then $B$ is deterministic, c\`{a}dl\`{a}g and of finite variation, with $B_0=\mu(\{0\})$ and $\Delta B_s=\mu(\{s\})$ for $s>0$.
The product formula for $B$ and the semimartingale $Y^j$ gives
\begin{align*}
B_TY_T^j-B_0Y_0^j
&=\int_{(0,T]}B_{s-}\,\mathrm{d}Y_s^j+\int_{(0,T]}Y_{s-}^j\,\mu(\mathrm{d}s)+\sum_{s\in(0,T]}\mu(\{s\})\Delta Y_s^j\\
&=\int_{(0,T]}B_{s-}\,\mathrm{d}Y_s^j+\int_{(0,T]}Y_s^j\,\mu(\mathrm{d}s);
\end{align*}
see \citet[Chapter~20, Eq.~(1) and Theorem~20.6(viii)]{kallenberg21foundations}.
Since a c\`{a}dl\`{a}g path has at most countably many jumps, the two integrals with respect to $\mu$ differ by precisely the displayed jump sum.
Adding the atom at zero yields
\begin{equation*}
\int_{[0,T]}Y_s^j\,\mu(\mathrm{d}s)=B_TY_T^j-\int_{(0,T]}B_{s-}\,\mathrm{d}Y_s^j.
\end{equation*}
Summing the identity over $j$ and adding $J_i+b_i$ proves part~\textup{(b)}.
\end{proof}

\begin{remark}
The attainability of contingent claims by trading in the available assets is a basic issue in mathematical finance.
In Brownian market models, the martingale representation theorem is the classical starting point.
Suppose that $Y=W$ is a standard $d$-dimensional Brownian motion and that $g\in L^2(\Omega,\Fcal_T,\PP)$.
The martingale $V_t=\mathbb{E}[g\mid\Fcal_t]$ has a continuous version of the form
\begin{equation*}
V_t=\mathbb{E}[g]+\int_0^t H_s^\top\,\mathrm{d}W_s,\qquad 0\le t\le T,
\end{equation*}
for a predictable $H$ with $\mathbb{E}\int_0^T|H_s|^2\,\mathrm{d}s<\infty$.

Such a representation need not hold for a general martingale, even in its own completed natural filtration.
Let $Z$ take the values $-1,0,1$ with equal probabilities, fix $t_0\in(0,T)$, and set $Y_t=Z\indicatorset{[t_0,\infty)}(t)$ with its completed natural filtration $\FF$.
For $s<t_0\le t$,
\begin{equation*}
\mathbb{E}[Y_t\mid\Fcal_s]=\mathbb{E}[Z]=0=Y_s,
\end{equation*}
while the martingale property is immediate in the remaining cases.
If $H$ is predictable, $H_{t_0}$ is $\Fcal_{t_0-}$-measurable.
But $\Fcal_{t_0-}$ is $\PP$-trivial, because $Y_s=0$ for every $s<t_0$.
Hence $H_{t_0}=a$ almost surely for some $a\in\re$.
Since $Y$ has only the jump $\Delta Y_{t_0}=Z$,
\begin{equation*}
\int_{(0,T]} H_s\,\mathrm{d}Y_s=H_{t_0}\Delta Y_{t_0}=aZ.
\end{equation*}
Thus, for $g=Z^2$, no representation $g=\mathbb{E}[g]+\int_{(0,T]} H_s\,\mathrm{d}Y_s$ is possible, since on $\{Z=0\}$ its right-hand side equals $\mathbb{E}[Z^2]=2/3$, whereas its left-hand side equals $0$.

More generally, if $\FF$ is right-continuous and $Y$ is a square-integrable martingale on $[0,T]$, the Galtchouk--Kunita--Watanabe decomposition gives, for $g\in L^2(\Omega,\Fcal_T,\PP)$,
\begin{equation*}
g=\mathbb{E}[g\mid\Fcal_0]+\int_{(0,T]}H_s^\top\,\mathrm{d}Y_s+L_T,
\end{equation*}
where $H$ is predictable, $H\cdot Y$ is a square-integrable martingale, and $L$ is a square-integrable martingale with $L_0=0$ that is strongly orthogonal to each coordinate of $Y$ \citep[p.~31]{ansel93decomposition}.
In the preceding example, $H=0$ and $L_t=(Z^2-2/3)\indicatorset{[t_0,\infty)}(t)$, since $\mathbb{E}[(Z^2-2/3)Z]=0$.

In contrast, Theorem~\ref{thm:path-approximation} approximates terminal claims by nonlinear functions of linear path observations, rather than representing them exactly as stochastic gains.
Moreover, part~\textup{(a)} requires neither a martingale representation property nor a semimartingale assumption on $Y$, and controls the error in the Orlicz norm on $M^\Phi$, or on $L^\Phi$ when $\Phi\in\Delta_2$.

Finally, taking $h_i^j=0$ and finitely supported measures $\mu_i^j=\sum_{k=1}^n a_{ik}^j\delta_{t_k}$, with $a_{ik}^j\in\re$ and $0\le t_1<\cdots<t_n\le T$, reduces \eqref{eq:path-net} to
\begin{equation*}
G=\sum_{i=1}^m\gamma_i\psi\biggl(
\sum_{k=1}^n\sum_{j=1}^d a_{ik}^jY_{t_k}^j+b_i\biggr),
\end{equation*}
the standard feedforward neural network with one hidden layer applied to $(Y_{t_1},\ldots,Y_{t_n})$.
The remainder of this section studies these networks and their approximation properties.
\end{remark}

The next proposition shows that even when there is an uncountable family of random variables generating the observable information, it suffices to reduce to a countable subfamily.
For an application of the following proposition, recall that in separable metric spaces, the Borel $\sigma$-algebra is always countably generated. 
\begin{proposition}\label{prop:countable-representation}
Let $(\Omega, \Fcal)$ and $(S, \mathcal{S})$ be measurable spaces.
Let $I$ be a non-empty set, and $(\Fcal_i)_{i \in I}$ be an indexed family of sub-$\sigma$-algebras of $\Fcal$.
For each $J \subset I$, denote $\Fcal_J = \sigma(\bigcup_{j \in J} \Fcal_j)$.
Let $g \from \Omega \to S$ be an $\Fcal_I$-$\mathcal{S}$-measurable function such that the trace-$\sigma$-algebra of $\mathcal{S}$ on $g(\Omega)$, which we denote $\mathcal{S}_g$, is countably generated.
Then there exists a countable set $J \subset I$ such that $g$ is $\Fcal_J$-$\mathcal{S}$-measurable.
\end{proposition}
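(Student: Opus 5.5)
The proof combines the classical ``countable subfamily'' lemma for generated $\sigma$-algebras with the assumption that $\mathcal{S}_g$ is countably generated. Let $\mathcal{C}$ denote the collection of all sets $A \in \Fcal_I$ for which there exists a countable $J \subset I$ with $A \in \Fcal_J$.

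First, I show that $\mathcal{C}$ is a $\sigma$-algebra containing $\bigcup_{i \in I} \Fcal_i$. Each $A \in \Fcal_i$ lies in $\Fcal_{\{i\}}$, so the union is contained in $\mathcal{C}$. Complements stay in the same $\Fcal_J$. For a sequence $A_n \in \Fcal_{J_n}$, the union $\bigcup_n A_n$ lies in $\Fcal_{J}$ with $J = \bigcup_n J_n$, which is countable, because $\Fcal_{J_n} \subset \Fcal_J$ by monotonicity of $J \mapsto \Fcal_J$. Since $\Fcal_I$ is the smallest $\sigma$-algebra containing the union, this gives $\mathcal{C} = \Fcal_I$.

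Next, I use the countable generator. Choose countably many sets $C_1, C_2, \ldots \in \mathcal{S}_g$ generating $\mathcal{S}_g$, and write $C_n = B_n \cap g(\Omega)$ with $B_n \in \mathcal{S}$. Then $g^{-1}(B_n) = g^{-1}(C_n) \in \Fcal_I = \mathcal{C}$, so there are countable $J_n \subset I$ with $g^{-1}(B_n) \in \Fcal_{J_n}$. Set $J = \bigcup_n J_n$, which is countable.

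It remains to show that $g$ is $\Fcal_J$-$\mathcal{S}$-measurable. This step needs care, because the generator lives on the trace $g(\Omega)$ rather than on $S$. Consider $g$ as a map $g \from \Omega \to g(\Omega)$. Since $g^{-1}(B) = g^{-1}(B \cap g(\Omega))$ for every $B \in \mathcal{S}$, it suffices to show that $g^{-1}(C) \in \Fcal_J$ for all $C \in \mathcal{S}_g$. The family $\{C \in \mathcal{S}_g : g^{-1}(C) \in \Fcal_J\}$ is a $\sigma$-algebra on $g(\Omega)$, because preimages commute with complements relative to $g(\Omega)$ and with countable unions. It contains every $C_n$, so it contains $\sigma(C_n : n \in \na) = \mathcal{S}_g$. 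Hence every $B \in \mathcal{S}$ satisfies $g^{-1}(B) = g^{-1}(B \cap g(\Omega)) \in \Fcal_J$, which proves the claim.

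The only points that need checking are the monotonicity $\Fcal_{J_n} \subset \Fcal_J$ and the passage through the trace $\sigma$-algebra; the rest is routine.
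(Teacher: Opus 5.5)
Your proposal is correct and follows essentially the same route as the paper: both show that the union of $\Fcal_J$ over countable $J\subset I$ is a $\sigma$-algebra equal to $\Fcal_I$, then pull back a countable generator of $\mathcal{S}_g$ and take $J$ as the union of the countable index sets obtained for the preimages. The paper simply asserts that $g$ is measurable with respect to the $\sigma$-algebra generated by these preimages, whereas you spell this step out with a good-sets argument on the trace $\sigma$-algebra.
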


\begin{proof}
Let $(S_n)_{n \in \na}$ be a sequence of elements of $\mathcal{S}$ such that $\mathcal{S}_g = \sigma(S_n \cap g(\Omega) \colon n \in \na)$.
For every $n \in \na$, let $F_n = g^{-1}(S_n) \in \Fcal_I$, and $\Fcal_g \coloneqq \sigma(F_n \colon n \in \na)$.
Then $g$ is $\Fcal_g$-$\mathcal{S}$-measurable, $\Fcal_g$ is countably generated, and $\Fcal_g \subset \Fcal_I$.
Next, we claim that
\begin{equation}\label{eq:sigma-algebra-countable-subfamilies}
\Fcal_I = \bigcup_{\substack{J \subset I \\ J\, \textrm{countable}}} \Fcal_J.
\end{equation}
To this end, first observe that the right-hand side of \eqref{eq:sigma-algebra-countable-subfamilies}, which we denote $\mathcal{G}$, is indeed a $\sigma$-algebra. 
For every countable $J \subset I$, clearly $\Omega \in \Fcal_J$, and thus $\Omega \in \mathcal{G}$. 
If $B \in \mathcal{G}$, then there exists a countable $J \subset I$ such that $B \in \Fcal_J$ and thus $B^{\mathsf{c}} \in \Fcal_J$, which implies that $B^{\mathsf{c}} \in \mathcal{G}$. 
Let $(B_n)_{n \in \na}$ be a sequence in $\mathcal{G}$.
For each $n \in \na$, there exists a countable $J_n \subset I$ such that $B_n \in \Fcal_{J_n}$.
The set $J = \bigcup_{n \in \na} J_n$ is countable, and $B_n \in \Fcal_J$ for each $n \in \na$, hence $\bigcup_{n \in \na} B_n \in \Fcal_J$ and thus $\bigcup_{n \in \na} B_n \in \mathcal{G}$.

This shows that $\mathcal{G}$ is indeed a $\sigma$-algebra. 
To show equality in \eqref{eq:sigma-algebra-countable-subfamilies}, note that $\bigcup_{i \in I} \Fcal_i$ generates $\Fcal_I$. 
Let $F \in \bigcup_{i \in I} \Fcal_i$. 
Then there exists $i \in I$ such that $F \in \Fcal_{\{i\}} \subset \mathcal{G}$, which implies $\Fcal_I \subset \mathcal{G}$. 
On the other hand, for each countable $J \subset I$, obviously $\Fcal_J \subset \Fcal_I$ and thus $\mathcal{G} \subset \Fcal_I$. 

To conclude, recall that $g$ is measurable with respect to the countably generated $\sigma$-algebra $\Fcal_g = \sigma(F_n \colon n \in \na)$, and that $\Fcal_g \subset \Fcal_I$. 
For each $n \in \na$, \eqref{eq:sigma-algebra-countable-subfamilies} implies the existence of a countable $J_n \subset I$ such that $F_n \in \Fcal_{J_n}$. 
Setting $J \coloneqq \bigcup_{n \in \na} J_n$ yields the assertion.
\end{proof}

The proof of Proposition~\ref{prop:countable-representation} used the fact that $g$ is $\Fcal_g$-measurable, where $\Fcal_g$ is countably generated.
If $S$ is a separable metric space, one may argue using simple functions.
Recall that a function $f \from \Omega \to S$ is simple if $f(\Omega) \subset S$ is a finite set.

\begin{lemma}
Let $(\Omega, \Fcal)$ be a measurable space, $(S, \varrho)$ be a separable metric space and $\B_S$ its Borel $\sigma$-algebra.
Let $g \from \Omega \to S$ be measurable.
Then there exists a sequence $(g_n)_{n \in \na}$ of $\Fcal$-$\B_S$-measurable simple functions such that $\lim_{n \to \infty} g_n(\omega) = g(\omega)$ for every $\omega \in \Omega$.
Moreover, there exists a countably generated sub-$\sigma$-algebra $\Fcal_g \subset \Fcal$ such that $g$ is $\Fcal_g$-$\B_S$-measurable.
\end{lemma}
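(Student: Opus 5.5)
The plan is to build the simple approximants explicitly from a countable dense subset of $S$, and then read off the countably generated $\sigma$-algebra from them. If $S=\emptyset$ there is nothing to prove (then $\Omega=\emptyset$), so assume $S\neq\emptyset$ and fix a sequence $(s_k)_{k\in\na}$ that is dense in $S$.

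For $n\in\na$ and $\omega\in\Omega$, let $k_n(\omega)$ be the smallest $k\in\{1,\ldots,n\}$ minimizing $\varrho(g(\omega),s_k)$ over $k\in\{1,\ldots,n\}$, and set $g_n(\omega)\coloneqq s_{k_n(\omega)}$. Then $g_n(\Omega)\subset\{s_1,\ldots,s_n\}$ is finite. For measurability, note that each $\omega\mapsto\varrho(g(\omega),s_k)$ is $\Fcal$-measurable, since it is the composition of $g$ with the continuous map $x\mapsto\varrho(x,s_k)$. Hence each set
\begin{equation*}
\{k_n=k\}=\bigcap_{l<k}\{\varrho(g,s_k)<\varrho(g,s_l)\}\cap\bigcap_{l>k,\,l\le n}\{\varrho(g,s_k)\le\varrho(g,s_l)\}
\end{equation*}
lies in $\Fcal$. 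Consequently $g_n^{-1}(B)=\bigcup_{k\colon s_k\in B}\{k_n=k\}\in\Fcal$ for every $B\in\B_S$.

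For pointwise convergence, fix $\omega$ and $\varepsilon>0$. By density, choose $K$ with $\varrho(g(\omega),s_K)<\varepsilon$. For all $n\ge K$, the minimizing choice gives $\varrho(g_n(\omega),g(\omega))\le\varrho(s_K,g(\omega))<\varepsilon$, so $g_n(\omega)\to g(\omega)$.

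For the second assertion, define $\Fcal_g\coloneqq\sigma(\{k_n=k\}\colon n\in\na,\,1\le k\le n)$. This is countably generated and contained in $\Fcal$, and each $g_n$ is $\Fcal_g$-$\B_S$-measurable by the same preimage formula as above. The only nonroutine step is the standard fact that a pointwise limit of measurable maps into a metric space is measurable. For closed $C\subset S$,
\begin{equation*}
g^{-1}(C)=\bigcap_{m\in\na}\bigcup_{N\in\na}\bigcap_{n\ge N}\{\varrho(g_n,C)<1/m\},
\end{equation*}
where $\varrho(g_n,C)$ is $\Fcal_g$-measurable because $g_n$ is $\Fcal_g$-measurable and $x\mapsto\varrho(x,C)$ is continuous. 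Since closed sets generate $\B_S$, $g$ is $\Fcal_g$-$\B_S$-measurable. I expect no real obstacle beyond verifying this limit identity.

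To verify it: if $g(\omega)\in C$, then $\varrho(g_n(\omega),C)\le\varrho(g_n(\omega),g(\omega))\to0$, so $\omega$ lies in the right-hand side. Conversely, if $\omega$ lies in the right-hand side, then $\varrho(g_n(\omega),C)<1/m$ eventually for every $m$. By continuity of $x\mapsto\varrho(x,C)$ this gives $\varrho(g(\omega),C)=\lim_n\varrho(g_n(\omega),C)=0$, and hence $g(\omega)\in C$ because $C$ is closed.
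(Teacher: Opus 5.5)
Your proof is correct and follows essentially the same route as the paper. Both use nearest-point approximants from $\{s_1,\ldots,s_n\}$ with least-index tie-breaking, take $\Fcal_g$ generated by the sets $\{k_n=k\}$, and pass measurability to the pointwise limit; the only difference is that you verify this last step directly via closed sets, whereas the paper cites Kallenberg's Lemma~1.11(ii).
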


\begin{proof}
The assertion is immediate if $\Omega=\varnothing$.
Otherwise, choose a dense sequence $(s_n)_{n \in \na}$ in $S$, with repetitions if necessary, and set $S_n = \{s_1, \ldots, s_n\}$ for $n \in \na$.
For every $n \in \na$, let
\begin{equation*}
g_n(\omega) = \argmin_{s \in S_n} \varrho(s, g(\omega)),\qquad \omega \in \Omega,
\end{equation*}
where ties are broken by the least index.
For $1 \le k \le n$, the set on which index $k$ is chosen is
\begin{equation*}
F_k^n \coloneqq\bigcap_{1 \le j < k}\{\varrho(s_k,g)<\varrho(s_j,g)\}\cap\bigcap_{k < j \le n}\{\varrho(s_k,g)\le\varrho(s_j,g)\}.
\end{equation*}
Since each $\varrho(s_j,g)$ is measurable, $F_k^n \in \Fcal$.
The sets $F_1^n, \ldots, F_n^n$ are pairwise disjoint and cover $\Omega$, with $g_n=s_k$ on $F_k^n$, so $g_n$ is simple and $\Fcal$-$\B_S$-measurable.
For each $\omega \in \Omega$ and $\varepsilon > 0$, there exists $m \in \na$ such that $\varrho(s_m, g(\omega)) < \varepsilon$.
Then $\varrho(g_n(\omega), g(\omega)) \le \varrho(s_m,g(\omega)) < \varepsilon$ for every $n \ge m$, which proves the pointwise convergence.
The $\sigma$-algebra
\begin{equation*}
\Fcal_g \coloneqq\sigma(F_k^n \colon n \in \na,\ 1 \le k \le n)
\end{equation*}
is countably generated and contained in $\Fcal$.
Each $g_n$ is $\Fcal_g$-$\B_S$-measurable.
Since measurability carries over to the pointwise limit \citep[Lemma~1.11(ii)]{kallenberg21foundations}, $g$ is $\Fcal_g$-$\B_S$-measurable.
\end{proof}

In Theorem~\ref{thm:path-approximation}, we considered the approximation of random variables that are measurable with respect to the $\sigma$-algebra $\Fcal_T$ at a fixed time $T>0$.
More generally we consider random variables that are measurable with respect to the stopping-time $\sigma$-algebra $\Fcal_\tau$ for $\FF^0$- or $\FF$-stopping times $\tau$.
The latter case is particularly relevant when completion renders $\FF$ right-continuous, so that it satisfies the usual hypotheses.
The following assumption covers these cases, where conditions~\textup{(a)} and~\textup{(b)} are invoked separately for $\FF^0$- or $\FF$-stopping times, respectively.
Henceforth, we write $Y_s^\tau(\omega)\coloneqq Y_{s\wedge\tau(\omega)}(\omega)$ for $s\ge0$ and $\omega\in\Omega$.
\begin{assumption}\label{ass:stopped-path-representation}
At least one of the following conditions holds:
\begin{enumerate}
\item[{(a)}] The map $(\omega,t)\mapsto Y_t(\omega)$ is $(\Fcal_\infty^0\otimes\B_{\replus})$-$\B_{\re^d}$-measurable.
For all $t\ge0$, $\omega\in\Omega$, there exists $\widetilde\omega\in\Omega$ such that $Y_s(\widetilde\omega)=Y_{s\wedge t}(\omega)$ for every $s\ge0$.
\item[{(b)}] The process $Y$ is a L\'{e}vy process.
\end{enumerate}
\end{assumption}
The second part of condition~\textup{(a)} is called the sufficient-richness condition \citep[Section~1.2]{shiryaev08stopping}.
It holds for the coordinate process on the spaces of continuous or c\`{a}dl\`{a}g paths, and covers canonical realizations of continuous and jump price models.
Condition~\textup{(b)} is motivated by \citet[Theorem~13.45]{he92semimartingale} and includes Brownian motion and L\'{e}vy log-price processes.
In particular, exponential L\'{e}vy price models are covered by taking $Y$ to be the underlying L\'{e}vy log-price process.
The two conditions are complementary: condition~\textup{(a)} concerns the realization of the process on the underlying sample space, whereas condition~\textup{(b)} concerns its stochastic dynamics.
Neither implies the other in general, although both hold for canonical path-space realizations of many L\'{e}vy models.

Before we state and prove our next result, we use the convention that for a stopping time $\tau$ of either $\mathbb F^0$ or $\mathbb F$, we write $\mathcal F_\tau$ for the stopping-time $\sigma$-algebra associated with the filtration with respect to which $\tau$ is being considered.
\begin{proposition}\label{prop:countable-observations}
In the context of Proposition~\ref{prop:countable-representation}, and for our complete probability space $(\Omega, \Fcal, \PP)$ with filtrations $\FF^0$ and $\FF$,
\begin{enumerate}
\item[{(a)}] Fix $t \in [0, \infty]$, and let $g \from \Omega \to \re$ be $\Fcal_t^0$-measurable.
Then there exists a countable set $J \subset [0, t] \cap \mathbb{R}_+$ such that, denoting $\Fcal_J = \sigma(Y_t \colon t \in J)$, $g$ is $\Fcal_J$-measurable.
\item[{(b)}] Suppose that Assumption~\ref{ass:stopped-path-representation}(a) holds.
Let $\tau\from\Omega\to\replus$ be an $\FF^0$-stopping time, and let $g\from\Omega\to\re$ be $\Fcal_\tau$-measurable.
Then there exists a countable set $J\subseteq\replus$ such that $g$ is measurable with respect to $\Fcal_J=\sigma(Y_s^\tau\colon s\in J)$.
\item[{(c)}] Suppose that Assumption~\ref{ass:stopped-path-representation}(b) holds.
Let $\tau\from\Omega\to\replus$ be an $\FF$-stopping time, and let $g\from\Omega\to\re$ be $\Fcal_\tau$-measurable.
Then there exists a countable set $J\subseteq\replus$ such that $g$ is measurable with respect to $\Fcal_J=\sigma(\tau)\vee\sigma(Y_s^\tau\colon s\in J)\vee\sigma(\mathcal N_\PP)$.
\end{enumerate}
\end{proposition}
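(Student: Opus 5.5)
Part~(a) is a direct application of Proposition~\ref{prop:countable-representation}. I take $I=[0,t]\cap\replus$ with $\Fcal_s=\sigma(Y_s)$ for $s\in I$, so that $\Fcal_I=\Fcal_t^0$, and $(S,\mathcal S)=(\re,\B_\re)$. The trace of $\B_\re$ on $g(\Omega)$ is generated by the traces of the rational half-lines, so it is countably generated. The proposition then gives a countable $J\subset I$ with $g$ being $\Fcal_J$-measurable.

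For part~(b), the plan is first to show $\Fcal_\tau=\sigma(Y_s^\tau\colon s\ge0)$ (Galmarino's test), and then to apply Proposition~\ref{prop:countable-representation} with $I=\replus$ and $\Fcal_s=\sigma(Y_s^\tau)$, arguing as in part~(a).

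The inclusion $\supseteq$ is standard. Joint measurability of $(\omega,t)\mapsto Y_t(\omega)$ with respect to $\Fcal^0_\infty\otimes\B_{\replus}$ shows that $Y^\tau_s=Y_{s\wedge\tau}$ is $\Fcal^0_\infty$-measurable. To get $\Fcal_\tau$-measurability, I would check that on $\{\tau\le u\}$ one has $Y_{s\wedge\tau}=Y_{s\wedge\tau\wedge u}$, which is $\Fcal_u^0$-measurable. This uses progressive-type measurability restricted to $[0,u]$, which I would derive from the product measurability via the sufficient-richness condition or a direct argument.

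For $\subseteq$ I would use sufficient richness. Define the stopping map $\theta(\omega)=\widetilde\omega$, where $\widetilde\omega$ is chosen so that $Y_\cdot(\widetilde\omega)=Y_{\cdot\wedge\tau(\omega)}(\omega)$. For $A\in\Fcal_\tau$, I want to show $\indicatorset{A}(\omega)=\indicatorset{A}(\theta\omega)$. The route is the classical one: $\tau(\theta\omega)=\tau(\omega)$ because $\{\tau\le u\}\in\Fcal_u^0$ is determined by the path up to time $u$. Then $A\cap\{\tau\le u\}\in\Fcal^0_u$ is determined by $Y_{\cdot\wedge u}$, and on $\{\tau\le u\}$ the two paths agree up to $u$. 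Since $\Fcal^0_\infty=\sigma(Y)$, any $\Fcal^0_\infty$-measurable $h$ can be written as $h=H(Y_\cdot)$ with $H$ measurable on path space. Hence $\indicatorset{A}=H_A(Y_\cdot)=H_A(Y_\cdot(\theta\omega))=H_A(Y^\tau_\cdot(\omega))$, which is $\sigma(Y^\tau)$-measurable. The fact that "$F\in\Fcal^0_u$ is determined by the path up to $u$" follows because $F=\{Y_{\cdot\wedge u}\in B\}$ for some $B$ by the functional representation lemma. \textbf{This Galmarino step is the main obstacle}; I would follow \citet[Section~1.2]{shiryaev08stopping}.

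For part~(c), $\tau$ is an $\FF$-stopping time and the filtration is completed. The natural filtration of a Lévy process has a right-continuous augmentation, so I would use \citet[Theorem~13.45]{he92semimartingale}. It states that for a Lévy process with completed natural filtration, $\Fcal_\tau=\sigma(\tau)\vee\sigma(Y^\tau_s\colon s\ge0)\vee\sigma(\mathcal N_\PP)$, possibly phrased as $\Fcal_\tau$ being generated by the stopped process together with the null sets. Granting that identity, the argument is as in~(a). I apply Proposition~\ref{prop:countable-representation} with index set $I=\replus\cup\{\ast,\ast\ast\}$ and generating family $\sigma(Y_s^\tau)$ for $s\in\replus$, $\sigma(\tau)$ for $\ast$, and $\sigma(\mathcal N_\PP)$ for $\ast\ast$. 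This yields a countable $J'$; setting $J=J'\cap\replus$ and adding the two extra $\sigma$-algebras gives the claim.

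If the cited theorem is only available for predictable or for $\FF^0$-stopping times, I would reduce to that case. An $\FF$-stopping time agrees almost surely with an $\FF^0_+$-stopping time, and $\Fcal_\tau$ agrees with the corresponding $\sigma$-algebra up to null sets, which are absorbed by $\sigma(\mathcal N_\PP)$.
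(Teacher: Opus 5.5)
Your proposal is correct and follows the paper's proof. Part (a) is a direct application of Proposition~\ref{prop:countable-representation}. Part (b) rests on $\Fcal_\tau=\sigma(Y_s^\tau\colon s\ge0)$, which the paper simply cites as \citet[Section~1.2, Theorem~6]{shiryaev08stopping}; you sketch its Galmarino-type proof instead, and only the inclusion $\Fcal_\tau\subseteq\sigma(Y_s^\tau\colon s\ge0)$ is actually needed. Part (c) combines \citet[Theorem~13.45]{he92semimartingale} with the same countable reduction, and your index set $\replus\cup\{\ast,\ast\ast\}$ is a bookkeeping variant of the paper's choice $\mathcal G_s=\sigma(\tau)\vee\sigma(Y_s^\tau)\vee\sigma(\mathcal N_\PP)$.
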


\begin{proof}
Part~\textup{(a)} follows from Proposition~\ref{prop:countable-representation} with $I=[0,t]\cap\replus$ and $\Fcal_i=\sigma(Y_i)$.
For part~\textup{(b)}, under Assumption~\ref{ass:stopped-path-representation}(a),
\begin{equation*}
\Fcal_\tau=\sigma(Y_s^\tau\colon s\ge0)
\end{equation*}
by \citet[Section~1.2, Theorem~6]{shiryaev08stopping}.
Proposition~\ref{prop:countable-representation}, applied with $I=\replus$ and $\Fcal_i=\sigma(Y_i^\tau)$, therefore yields the assertion.
For part~\textup{(c)}, \citet[Theorem~13.45]{he92semimartingale} gives
\begin{equation*}
\Fcal_\tau=\sigma(\tau)\vee\sigma(Y_s^\tau\colon s\ge0)\vee\sigma(\mathcal N_\PP).
\end{equation*}
For $s\ge0$, let $\mathcal G_s\coloneqq\sigma(\tau)\vee\sigma(Y_s^\tau)\vee\sigma(\mathcal N_\PP)$.
Then $\Fcal_\tau=\sigma(\mathcal G_s\colon s\ge0)$.
Proposition~\ref{prop:countable-representation} therefore yields a countable set $J\subseteq\replus$ such that $g$ is measurable with respect to
\begin{equation*}
\sigma(\mathcal G_s\colon s\in J)\subseteq\sigma(\tau)\vee\sigma(Y_s^\tau\colon s\in J)\vee\sigma(\mathcal N_\PP).
\end{equation*}
\end{proof}

\begin{definition}\label{def:neural-random-variables}
Let $\tau\from\Omega\to\replus$ be a stopping time for either $\FF^0$ or $\FF$.
An $\Fcal_\tau$-measurable function $\varphi\from\Omega\to\re$ is represented by a neural network if there exist $n\in\na$, deterministic times $0\le t_1<\cdots<t_n<\infty$, and a neural network
$f\in\mathcal{NN}((\re^{1+d\times n})^\ast,\psi)$ such that
\begin{equation*}
\varphi=f(\tau,Y_{t_1}^\tau,\ldots,Y_{t_n}^\tau).
\end{equation*}
We denote the set of these functions by $\mathcal{NN}_\tau(\psi)$.
For a deterministic time $t<\infty$, we write $\mathcal{NN}_t(\psi)\coloneqq\mathcal{NN}_\tau(\psi)$ with $\tau\equiv t$.
\end{definition}
For deterministic times $t$, the constant time coordinate can be absorbed into the bias terms, while under Assumption~\ref{ass:stopped-path-representation}(a) the time coordinate may be omitted by setting the corresponding weights to zero.

In Theorem~\ref{thm:path-approximation} we showed that neural networks that act on the entire sample path of $Y$ are dense in the corresponding Orlicz space.
Proposition~\ref{prop:countable-observations} then showed that random variables measurable with respect to the natural or stopping-time $\sigma$-algebras are determined by at most countably many observations of the stopped process $Y$.
For such random variables, we instead restrict attention to finitely many observations.
The following theorem shows that this suffices to render random variables represented by neural networks dense in the corresponding Orlicz spaces.
It extends \citet[Lemma~4.3.1]{oksendal03equations}, where $L^2$-random variables measurable with respect to the Brownian filtration at a fixed time are approximated by functions of finitely many Brownian observations.
We continue to work under Standing Assumption~\ref{itm:sigmoidal-activation}.
In Section~\ref{sec:stochastic-integration}, case~\textup{(b)(iii)} will be of particular importance. 

\begin{theorem}\label{thm:random-variable-approximation}
Let $(\Phi,\Psi)$ be a complementary pair of Young functions, where $\Phi\in\Delta_2$.
\begin{enumerate}
\item[{(a)}] Let $m\in\na$, let $I$ be a non-empty set, and let $(Z_i)_{i\in I}$ be an indexed family of $\Fcal$-measurable random variables $Z_i\from\Omega\to\re^m$.
Let $\widetilde{\Fcal}=\sigma(Z_i\colon i\in I)\vee\sigma(\mathcal N_\PP)$, and let $g\in\mathcal L^\Phi(\PP)$ be $\widetilde{\Fcal}$-measurable.
Then for every $\varepsilon>0$, there exist $n\in\na$, a finite set $J=\{j_1,\ldots,j_n\}\subset I$, and a neural network $f\in\mathcal{NN}((\re^{m\times n})^\ast,\psi)$ such that
\begin{equation*}
\|g-f(Z_{j_1},\ldots,Z_{j_n})\|_\Phi<\varepsilon.
\end{equation*}
\item[{(b)}] Let $\tau\from\Omega\to\replus$ and suppose that one of the following conditions holds:
\begin{enumerate}
\item[{(i)}] $\tau$ is a deterministic time;
\item[{(ii)}] Assumption~\ref{ass:stopped-path-representation}(a) holds and $\tau$ is an $\FF^0$-stopping time;
\item[{(iii)}] Assumption~\ref{ass:stopped-path-representation}(b) holds and $\tau$ is an $\FF$-stopping time.
\end{enumerate}
Let $g\in\mathcal L^\Phi(\PP)$ be $\Fcal_\tau$-measurable.
Then for every $\varepsilon>0$, there exists $f\in\mathcal{NN}_\tau(\psi)$ such that
\begin{equation*}
\|g-f\|_\Phi<\varepsilon.
\end{equation*}
\end{enumerate}
\end{theorem}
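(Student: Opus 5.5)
We first reduce part~(a) to a countable index set, then to a finite one, and finally approximate on a finite-dimensional space. Modifying $g$ on a null set does not change its $\|\cdot\|_\Phi$-class. By \citet[Lemma~1.27]{kallenberg21foundations} we may therefore assume that $g$ is $\sigma(Z_i\colon i\in I)$-measurable. Proposition~\ref{prop:countable-representation}, applied with $\Fcal_i=\sigma(Z_i)$ and $S=\re$ (whose Borel $\sigma$-algebra is countably generated), yields a countable set $\{i_1,i_2,\ldots\}\subseteq I$ such that $g$ is $\mathcal G_\infty$-measurable. Here $\mathcal G_k\coloneqq\sigma(Z_{i_1},\ldots,Z_{i_k})$ and $\mathcal G_\infty=\bigvee_k\mathcal G_k$. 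Since $\Phi\in\Delta_2$, Theorem~\ref{thm:orlicz-conditional-expectations} gives $\mathbb{E}[g\mid\mathcal G_k]\to g$ in $L^\Phi(\PP)$. We fix $k$ with $\|\mathbb{E}[g\mid\mathcal G_k]-g\|_\Phi<\varepsilon/2$. If $I$ is finite, the sequence simply terminates.

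By the functional representation lemma \citep[Lemma~1.14]{kallenberg21foundations}, $\mathbb{E}[g\mid\mathcal G_k]=h(Z_{i_1},\ldots,Z_{i_k})$ for a Borel function $h\from\re^{m\times k}\to\re$. Let $\nu$ be the law of $(Z_{i_1},\ldots,Z_{i_k})$. Then $h\in L^\Phi(\nu)$, since $\int\Phi(\alpha h)\,\mathrm d\nu=\mathbb{E}[\Phi(\alpha\,\mathbb{E}[g\mid\mathcal G_k])]$, and this is finite by conditional Jensen together with $L^\Phi=M^\Phi$. On $\re^{m\times k}$ we take $\Gamma_0=\Gamma=(\re^{m\times k})^\ast$; it is closed under addition and subtraction and generates the Borel $\sigma$-algebra. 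Theorem~\ref{thm:orlicz-approximation} then gives $f\in\mathcal{NN}((\re^{m\times k})^\ast,\psi)$ with $\|f-h\|_{L^\Phi(\nu)}<\varepsilon/2$.

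The image-measure identity $\int\Phi(\cdot)\,\mathrm d\nu=\mathbb{E}[\Phi(\cdot\circ Z)]$ holds for the Luxemburg norm, and hence for the equivalent Orlicz norm. To make it work for the Orlicz norm itself, we transfer the dual ball of $L^\Psi(\nu)$ to $L^\Psi(\PP)$ by conditioning on $\mathcal G_k$. Conditional Jensen applied to $\Psi$ gives $\int\Psi(\mathbb{E}[\cdot\mid\mathcal G_k])\,\mathrm d\PP\le1$, so the norms coincide exactly. Consequently $\|f(Z_{i_1},\ldots,Z_{i_k})-\mathbb{E}[g\mid\mathcal G_k]\|_\Phi<\varepsilon/2$, and the triangle inequality finishes part~(a). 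The distinct indices play the role of $j_1,\ldots,j_n$; the indices $i_k$ are already distinct.

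For part~(b) we reduce to part~(a). In case~(i), $\Fcal_t=\sigma(Y_s\colon s\le t)\vee\sigma(\mathcal N_\PP)$ and $Y_s=Y_s^t$. In case~(ii), $\Fcal_\tau=\sigma(Y_s^\tau\colon s\ge0)$ by \citet[Section~1.2, Theorem~6]{shiryaev08stopping}. In case~(iii), $\Fcal_\tau=\sigma(\tau)\vee\sigma(Y_s^\tau\colon s\ge0)\vee\sigma(\mathcal N_\PP)$ by \citet[Theorem~13.45]{he92semimartingale}. In all three cases we apply part~(a) with $m=1+d$ and $Z_s\coloneqq(\tau,Y_s^\tau)$, $s\ge0$, which generates the relevant $\sigma$-algebra up to null sets. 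In cases~(ii) and~(iii), $\sigma(\tau)$ is contained in $\Fcal_\tau$, so including it in the generating family is harmless. This yields a network in $(\tau,Y^\tau_{s_1},\tau,Y^\tau_{s_2},\ldots)$.

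The main obstacle is that Definition~\ref{def:neural-random-variables} requires $\tau$ to appear once, with increasing times $t_1<\cdots<t_n$, rather than repeated $\tau$-coordinates. This is handled by a linear reindexing. Each hidden unit is $\psi(\ell(\tau,Y^\tau_{s_1},\ldots,\tau,Y^\tau_{s_n})+\beta)$, and its linear functional $\ell$ equals a linear functional of $(\tau,Y^\tau_{t_1},\ldots,Y^\tau_{t_n})$, where the $t_k$ are the $s_j$ sorted and the time weights are summed. Hence the network lies in $\mathcal{NN}_\tau(\psi)$.
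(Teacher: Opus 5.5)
Your proof is correct and follows essentially the same route as the paper: a countable reduction via Proposition~\ref{prop:countable-representation}, Orlicz-norm convergence of conditional expectations (Theorem~\ref{thm:orlicz-conditional-expectations}), Doob--Dynkin factorization, and Theorem~\ref{thm:orlicz-approximation} on $\re^{m\times n}$; for part~(b) you use the Shiryaev and He--Wang--Yan identifications of $\Fcal_\tau$ followed by a linear reparametrization. The differences are cosmetic. You apply part~(a) directly to the uncountable family $(\tau,Y_s^\tau)_{s\ge0}$ instead of going through Proposition~\ref{prop:countable-observations}, and you explicitly justify two steps the paper leaves implicit: the image-measure identity for the Orlicz norm, and the sorting and merging of the time coordinates.
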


\begin{proof}
For part~\textup{(a)}, let $\mathcal G\coloneqq\sigma(Z_i\colon i\in I)$.
By Lemma~\ref{lem:orlicz-l1}, $g\in L^1(\PP)$.
Since $\widetilde{\Fcal}=\mathcal G\vee\sigma(\mathcal N_\PP)$, adjoining the $\PP$-null sets does not change random variables up to $\PP$-almost sure equality.
Hence $\widetilde g\coloneqq\mathbb E[g\mid\mathcal G]$ is $\mathcal G$-measurable and satisfies $\widetilde g=g$ $\PP$-almost surely.
By Proposition~\ref{prop:countable-representation}, there exists a countable set $J\subseteq I$ such that $\widetilde g$ is measurable with respect to $\Fcal_J\coloneqq\sigma(Z_j\colon j\in J)$.
Since $I$ is non-empty, we may enlarge $J$ if necessary so that $J\neq\varnothing$, and then write $J=\{j_k\colon k\in\na\}$, allowing repetitions if $J$ is finite.
For $n\in\na$, let $\mathcal G_n\coloneqq\sigma(Z_{j_1},\ldots,Z_{j_n})$.
Then $(\mathcal G_n)_{n\in\na}$ is increasing and $\sigma(\bigcup_{n\in\na}\mathcal G_n)=\Fcal_J$.
Set $M_n\coloneqq\mathbb E[\widetilde g\mid\mathcal G_n]$.
By Theorem~\ref{thm:orlicz-conditional-expectations},
\begin{equation*}
M_n\rightarrow\widetilde g\qquad\text{in }L^\Phi(\PP).
\end{equation*}
Choose $n\in\na$ such that $\|\widetilde g-M_n\|_\Phi<\varepsilon/2$.
By the Doob--Dynkin factorization lemma \citep[Lemma~1.14]{kallenberg21foundations}, there exists a Borel-measurable function $h\colon\re^{mn}\to\re$ such that $M_n=h(Z_{j_1},\ldots,Z_{j_n})$.
Let $\mu$ be the distribution of $(Z_{j_1},\ldots,Z_{j_n})$ on $\re^{mn}$.
Since $M_n\in L^\Phi(\PP)$, we have $h\in L^\Phi(\mu)$.
By Theorem~\ref{thm:orlicz-approximation}, there exists $f\in\mathcal{NN}((\re^{m\times n})^\ast,\psi)$ such that $\|h-f\|_\Phi<\varepsilon/2$ with respect to $\mu$.
By the definition of $\mu$, $\|M_n-f(Z_{j_1},\ldots,Z_{j_n})\|_\Phi=\|h-f\|_\Phi$.
Since $g=\widetilde g$ $\PP$-almost surely, the triangle inequality gives
\begin{equation*}
\|g-f(Z_{j_1},\ldots,Z_{j_n})\|_\Phi<\varepsilon.
\end{equation*}

For part~\textup{(b)(i)}, let $\tau\equiv t$.
Since $\Fcal_t=\sigma(Y_s\colon 0\le s\le t)\vee\sigma(\mathcal N_\PP)$, part~\textup{(a)} applies with $m=d$, $I=[0,t]$, and $Z_s=Y_s$.
Hence there exist $0\le t_1<\cdots<t_n\le t$ and $f\in\mathcal{NN}((\re^{d\times n})^\ast,\psi)$ such that $\|g-f(Y_{t_1},\ldots,Y_{t_n})\|_\Phi<\varepsilon$.
Define $\widehat f(r,x_1,\ldots,x_n)\coloneqq f(x_1,\ldots,x_n)$.
Since $\widehat f$ is obtained from $f$ by precomposition with a linear projection, $\widehat f\in\mathcal{NN}((\re^{1+d\times n})^\ast,\psi)$.
As $\tau=t$ and $Y_{t_i}^\tau=Y_{t_i}$, the random variable $\widehat f(\tau,Y_{t_1}^\tau,\ldots,Y_{t_n}^\tau)$ belongs to $\mathcal{NN}_\tau(\psi)$.

For part~\textup{(b)(ii)}, Proposition~\ref{prop:countable-observations}(b) yields a countable set $J\subseteq\replus$ such that $g$ is measurable with respect to $\sigma(Y_s^\tau\colon s\in J)$.
Replacing $J$ by $J\cup\{0\}$ if necessary, we may assume that $J\neq\varnothing$.
Applying part~\textup{(a)} with $m=d$ to the family $(Y_s^\tau)_{s\in J}$ yields deterministic times $0\le t_1<\cdots<t_n<\infty$ and $f\in\mathcal{NN}((\re^{d\times n})^\ast,\psi)$ such that $\|g-f(Y_{t_1}^\tau,\ldots,Y_{t_n}^\tau)\|_\Phi<\varepsilon$.
Define again $\widehat f(r,x_1,\ldots,x_n)\coloneqq f(x_1,\ldots,x_n)$.
Then as in the previous case, $\widehat f\in\mathcal{NN}((\re^{1+d\times n})^\ast,\psi)$ and $\widehat f(\tau,Y_{t_1}^\tau,\ldots,Y_{t_n}^\tau)\in\mathcal{NN}_\tau(\psi)$.

For part~\textup{(b)(iii)}, Proposition~\ref{prop:countable-observations}(c) gives a countable set $J\subseteq\replus$ such that $g$ is measurable with respect to $\sigma(\tau)\vee\sigma(Y_s^\tau\colon s\in J)\vee\sigma(\mathcal N_\PP)$.
Replacing $J$ by $J\cup\{0\}$ if necessary, we may assume that $J\neq\varnothing$.
For $s\in J$, set $Z_s\coloneqq(\tau,Y_s^\tau)\in\re^{d+1}$.
Applying part~\textup{(a)} with $m=d+1$ to the family $(Z_s)_{s\in J}$ gives deterministic times $0\le t_1<\cdots<t_n<\infty$ and $f\in\mathcal{NN}((\re^{(d+1)\times n})^\ast,\psi)$ such that
\begin{equation*}
\|g-f((\tau,Y_{t_1}^\tau),\ldots,(\tau,Y_{t_n}^\tau))\|_\Phi<\varepsilon.
\end{equation*}
Define the linear map $L\colon\re^{1+d\times n}\to\re^{(d+1)\times n}$ by $L(r,x_1,\ldots,x_n)=((r,x_1),\ldots,(r,x_n))$.
Since precomposition with $L$ maps linear functionals to linear functionals, $f\circ L\in\mathcal{NN}((\re^{1+d\times n})^\ast,\psi)$.
Therefore $(f\circ L)(\tau,Y_{t_1}^\tau,\ldots,Y_{t_n}^\tau)\in\mathcal{NN}_\tau(\psi)$.
\end{proof}

Recall that throughout, $(\Omega, \Fcal, \PP)$ is a complete probability space, and the initial $\sigma$-algebra $\Fcal_0$ is assumed to contain all $\PP$-null sets of $\Fcal$. 
In continuous-time mathematical finance, we endow this space with a filtration $\FF$ such that $(\Omega, \Fcal, \FF, \PP)$ satisfies the usual hypotheses, i.e., $\FF$ is complete and right-continuous.
While completeness is inherited automatically from our standing assumption, right-continuity is the remaining issue.
A sufficient condition follows from \citet[Remark~3.23]{schmock24stochastic}: if a process $Y$ is right-continuous with respect to the Euclidean norm on $\re^d$ and has independent future increments relative to its natural filtration $\FF^0$, then the associated filtration $\FF$ is right-continuous.
Brownian motion and, more generally, every L\'{e}vy process satisfy both requirements and are therefore admissible choices for $Y$.
This complements \citet[Theorem~13.45]{he92semimartingale}, used in Proposition~\ref{prop:countable-observations}(c), which identifies the corresponding stopping-time $\sigma$-algebras, and motivates the following standing assumption on the driving process $Y$.
\begin{assumption}\label{ass:levy-driver} 
The process $Y$ is a L\'{e}vy process.
Consequently, the filtered probability space $(\Omega, \Fcal, \FF, \PP)$ is complete and satisfies the usual hypotheses. 
\end{assumption}

\section{Universal approximation and stochastic integration}\label{sec:stochastic-integration}
Theorem~\ref{thm:random-variable-approximation} showed that measurable trading positions can be approximated by neural networks using observations of $Y$.
In a continuous-time market, however, a simple trading strategy consists of a sequence of such positions: at each trading time, the available information determines a position that is held until the next decision is made.
The positions of a strategy $V$ generate trading gains.
For a price process $X$, the cumulative gain is then given by the stochastic integral $\int V\,\mathrm{d}X$.
We next show that the approximation of individual trading decisions extends to arbitrary predictable stochastic integrals and, for c\`{a}gl\`{a}d integrands, the integrands themselves.

\begin{definition}
A real-valued stochastic process $V$ is called a simple predictable process if it admits a representation
\begin{equation}\label{eq:simple-process}
V_t = \varphi_0\indicatorset{\{0\}}(t) + \sum_{i=1}^n\varphi_i\indicatorset{(\tau_i,\tau_{i+1}]}(t), \qquad t\ge0,
\end{equation}
where $n\in\na$, $0=\tau_0=\tau_1\le\cdots\le\tau_{n+1}<\infty$ are $\FF$-stopping times, and $\varphi_i\in\mathcal{L}^\infty(\Fcal_{\tau_i},\PP)$ for $i=0,\ldots,n$.
A simple predictable process is called algorithmic if it admits such a representation with $\varphi_i\in\mathcal{NN}_{\tau_i}(\psi)$ for $i=0,\ldots,n$.
We denote the classes of simple and simple algorithmic strategies by $\mathcal{S}$ and $\mathcal{S}(\psi)$, respectively.
Requiring deterministic times in the respective representations yields the subclasses $\mathcal{E}$ and $\mathcal{E}(\psi)$ of elementary and elementary algorithmic strategies.
\end{definition}

\begin{proposition}\label{prop:simple-process-approximation}
Let $(\Phi, \Psi)$ be a complementary pair of Young functions, where $\Phi \in \Delta_2$.
Then for each $V \in \mathcal{E}$ and $\varepsilon > 0$, there exists $U \in \mathcal{E}(\psi)$ such that $\| (V-U)_\infty^\ast \|_\Phi < \varepsilon$.
In particular, $\mathcal{E}(\psi)$ is dense in $\mathcal{E}$ for the ucp topology.
Moreover, for each $V \in \mathcal{S}$ and $\varepsilon > 0$, there exists $U \in \mathcal{S}(\psi)$ such that $\| (V-U)_\infty^\ast \|_\Phi < \varepsilon$.
In particular, $\mathcal{S}(\psi)$ is dense in $\mathcal{S}$ for the ucp topology.
\end{proposition}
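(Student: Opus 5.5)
The plan is to fix a representation \eqref{eq:simple-process} of $V$ and replace each coefficient $\varphi_i$ separately by a network $f_i\in\mathcal{NN}_{\tau_i}(\psi)$. We keep the same stopping times, taken deterministic in the elementary case. Then $U\coloneqq f_0\indicatorset{\{0\}}+\sum_{i=1}^n f_i\indicatorset{(\tau_i,\tau_{i+1}]}$ is algorithmic, and lies in $\mathcal{E}(\psi)$ when the $\tau_i$ are deterministic.

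The first step is a pathwise bound. For every $t\ge0$ at most two indicator terms are nonzero, namely the one at $\{0\}$ or a single interval $(\tau_i,\tau_{i+1}]$, because these intervals are disjoint. Hence
\begin{equation*}
(V-U)_\infty^\ast\le\sum_{i=0}^n|\varphi_i-f_i| .
\end{equation*}
The Orlicz norm is a lattice norm, being monotone in $|\cdot|$ by its definition as a supremum of $\int|fg|\,\mathrm d\PP$. So the triangle inequality gives $\|(V-U)_\infty^\ast\|_\Phi\le\sum_{i=0}^n\|\varphi_i-f_i\|_\Phi$.

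The second step makes each summand small. Each $\varphi_i$ is bounded and $\Fcal_{\tau_i}$-measurable, so it belongs to $\mathcal L^\Phi(\PP)$. In the elementary case $\tau_i$ is deterministic, and Theorem~\ref{thm:random-variable-approximation}(b)(i) gives $f_i\in\mathcal{NN}_{\tau_i}(\psi)$ with $\|\varphi_i-f_i\|_\Phi<\varepsilon/(n+1)$. In the stopping-time case, Standing Assumption~\ref{ass:levy-driver} makes $Y$ a Lévy process and $\tau_i$ is an $\FF$-stopping time, so Theorem~\ref{thm:random-variable-approximation}(b)(iii) gives the same bound. For $\tau_0=0$ we simply use case (i). Summing the two steps yields $\|(V-U)_\infty^\ast\|_\Phi<\varepsilon$.

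The third step deduces the ucp density. We take $\Phi(x)=|x|$, which lies in $\Delta_2$, or note that Orlicz-norm convergence implies convergence in probability by Lemma~\ref{lem:orlicz-mean-convergence}. A sequence $U^k$ with $\|(V-U^k)_\infty^\ast\|_\Phi\to0$ then satisfies $\sup_{s\le t}|V_s-U^k_s|\to0$ in probability for every $t$, which is ucp convergence.

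The main obstacle is a technical point about the coefficients. Algorithmic strategies are defined as simple predictable processes, and these require $\varphi_i\in\mathcal L^\infty$. A neural network with bounded $\psi$ is bounded, so $f_i(\tau_i,Y^{\tau_i}_{t_1},\dots)$ is bounded. It is also $\Fcal_{\tau_i}$-measurable, because $\tau_i$ and each $Y^{\tau_i}_{t_k}$ are $\Fcal_{\tau_i}$-measurable: $Y$ is càdlàg and adapted, hence progressive. One must additionally ensure that $\psi$ is Borel, which Assumption~\ref{itm:sigmoidal-activation} provides. With this checked, no approximation beyond Theorem~\ref{thm:random-variable-approximation} is needed.
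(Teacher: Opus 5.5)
Your proposal is correct and follows essentially the same route as the paper. Like the paper, you keep the (deterministic or stopping) times, approximate each coefficient via Theorem~\ref{thm:random-variable-approximation}(b)(i) or (b)(iii), bound $(V-U)_\infty^\ast$ by $\sum_i|\varphi_i-f_i|$, combine monotonicity of $\|\cdot\|_\Phi$ with the triangle inequality, and pass to ucp via convergence in probability from Lemma~\ref{lem:orlicz-mean-convergence}. The differences are cosmetic: you fix tolerances $\varepsilon/(n+1)$ rather than taking sequences, and you justify monotonicity directly from the supremum definition of the Orlicz norm rather than citing Rao--Ren.
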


\begin{proof}
We argue both cases simultaneously.
Let $V$ be given by a representation~\eqref{eq:simple-process}, either with deterministic times $\tau_i$ or with $\FF$-stopping times $\tau_i$.
Fix $i \in \{0, 1, \ldots, n\}$, and note that $\varphi_i \in \mathcal{L}^\infty(\PP) \subset \mathcal{L}^\Phi(\PP)$.
By Theorem~\ref{thm:random-variable-approximation}(b)(i) and (b)(iii), respectively, there exists a sequence $(\varphi_{i,m})_{m\in\na}$ in $\mathcal{NN}_{\tau_i}(\psi)$ that converges to $\varphi_i$ in $\| \cdot \|_\Phi$.

Let $(V^m)_{m \in \na}$ be the sequence given by
\begin{equation*}
V^{m} = \varphi_{0,m}\indicatorset{\{0\}}(\cdot) + \sum_{i=1}^n \varphi_{i,m} \indicatorset{(\tau_i, \tau_{i+1}]}(\cdot), \quad m \in \na.
\end{equation*}
Since $\psi$ is bounded, each $\varphi_{i,m}$ is bounded, so $V^m\in\mathcal E(\psi)$ or $V^m\in\mathcal S(\psi)$, respectively.
By construction, $(V-V^m)_\infty^\ast = \sup_{t\ge0}|V_t-V_t^m| \le \sum_{i=0}^n |\varphi_i - \varphi_{i,m}|$.
By \citet[Proposition~3.3.4]{rao91orlicz}, the Orlicz norm $\| \cdot \|_\Phi$ is monotone in the sense that $0 \le f_1 \le f_2$ implies $\|f_1\|_\Phi \le \|f_2\|_\Phi$, hence 
\begin{equation*}
\| (V-V^m)_\infty^\ast \|_\Phi \le \big\| \sum_{i=0}^n |\varphi_i - \varphi_{i,m}| \big\|_\Phi \le \sum_{i=0}^n \| \varphi_i - \varphi_{i,m} \|_\Phi,
\end{equation*}
where the right-hand side converges to zero as $m \to \infty$.
Therefore, there exists $m \in \na$ such that $\| (V-V^m)_\infty^\ast \|_\Phi < \varepsilon$, and the required process $U$ is given by $U = V^m$.

By Lemma~\ref{lem:orlicz-mean-convergence}, norm convergence implies convergence in probability, so, for every $\varepsilon>0$,
\begin{equation*}
\PP\big( (V-V^m)_\infty^\ast > \varepsilon \big) \rightarrow 0, \qquad m\to\infty,
\end{equation*}
hence $V^m$ converges to $V$ uniformly on $\replus$ in probability, in particular in the ucp topology.
\end{proof}

\begin{example}\label{ex:algorithmic-class-small}
Proposition~\ref{prop:simple-process-approximation} shows that $\mathcal E(\psi)$ is dense in $\mathcal E$.
Nevertheless, the restriction from $\mathcal E$ to $\mathcal E(\psi)$ can be substantial.
To illustrate this point by means of an example, let $Y=W$ be a one-dimensional Brownian motion with its augmented natural filtration, and fix $0<t<T$.
For $h\in L^2([0,t])$, define
\begin{equation}\label{eq:brownian-elementary-family}
Z_h \coloneqq \int_0^t h(s)\,\mathrm{d}W_s,\qquad H^h \coloneqq \tanh(Z_h)\indicatorset{(t,T]}.
\end{equation}
Since $\tanh(Z_h)$ is bounded and $\Fcal_t$-measurable, $H^h\in\mathcal E$ for every $h\in L^2([0,t])$.
By contrast, we show that
\begin{equation}\label{eq:algorithmic-kernels}
\bigl\{h\in L^2([0,t]) : H^h\in\mathcal E(\psi)\bigr\}
\end{equation}
is contained in a countable union of compact subsets of $L^2([0,t])$, each having empty interior.

Suppose that $H^h\in\mathcal E(\psi)$.
The case $h=0$ is immediate, so let $h\ne0$ and fix an elementary algorithmic representation of $H^h$.
Choose $s\in(t,T)$ which is not one of its finitely many trading times.
The coefficient on the interval containing $s$ then equals $\tanh(Z_h)$ almost surely.
By the definition of $\mathcal E(\psi)$, there exist $0\le s_1,\ldots,s_n\le T$ such that $\tanh(Z_h)$ is measurable with respect to $\sigma(W_{s_1},\ldots,W_{s_n})$.
Since $\tanh$ is one-to-one, the same holds for $Z_h$.

Let $\mathcal H\coloneqq\operatorname{span}\{W_{s_1},\ldots,W_{s_n}\}\subset L^2(\PP)$ and denote by $PZ_h$ the orthogonal projection of $Z_h$ onto $\mathcal H$.
Then $R\coloneqq Z_h-PZ_h$ is Gaussian and uncorrelated with every $W_{s_i}$.
Indeed, since $PZ_h\in\mathcal H$, we may write $PZ_h=\sum_{i=1}^n a_iW_{s_i}$, and hence $R=\int_0^T\left(h(u)\mathbf 1_{[0,t]}(u)-\sum_{i=1}^n a_i\mathbf 1_{[0,s_i]}(u)\right)\,\mathrm dW_u$, so $(R,W_{s_1},\ldots,W_{s_n})$ is jointly Gaussian.
Therefore, $R$ is independent of $\sigma(W_{s_1},\ldots,W_{s_n})$.
It is also measurable with respect to this $\sigma$-algebra, because both $Z_h$ and $PZ_h$ are measurable with respect to it.
Hence $R$ is almost surely constant, and $\mathbb E[R]=0$ gives $R=0$ almost surely.
Thus
\begin{equation}\label{eq:gaussian-span}
Z_h\in\operatorname{span}\{W_{s_1},\ldots,W_{s_n}\}.
\end{equation}
Writing $Z_h=\int_0^T h(u)\indicatorset{[0,t]}(u)\,\mathrm{d}W_u$ and $W_{s_i}=\int_0^T\indicatorset{[0,s_i]}(u)\,\mathrm{d}W_u$, the It\^{o} isometry implies that $h\indicatorset{[0,t]}$ belongs to the linear span of $\indicatorset{[0,s_1]},\ldots,\indicatorset{[0,s_n]}$ in $L^2([0,T])$, since for suitable $a_1,\ldots,a_n\in\mathbb R$, $\|h\indicatorset{[0,t]}-\sum_{i=1}^n a_i\indicatorset{[0,s_i]}\|_2^2 =\mathbb E[(Z_h-\sum_{i=1}^n a_iW_{s_i})^2]=0$.
Consequently, $h$ agrees almost everywhere on $[0,t]$ with a finite step function.

For $n,m\in\na$, let $K_{n,m}$ consist of all functions of the form
\begin{equation}\label{eq:bounded-step-functions}
\sum_{j=1}^n a_j\indicatorset{(r_{j-1},r_j]},\qquad 0=r_0\le\cdots\le r_n=t,\qquad |a_j|\le m.
\end{equation}
Every finite step function belongs to some $K_{n,m}$.
Moreover, $K_{n,m}$ is compact in $L^2([0,t])$: the admissible coefficients and partition points form a compact subset of Euclidean space, and the map from these parameters to~\eqref{eq:bounded-step-functions} is continuous in $L^2$.
Indeed, convergence of the coefficients and partition points implies $L^2$-convergence because the coefficients remain uniformly bounded and the lengths of the symmetric differences of the corresponding intervals tend to zero.

Each $K_{n,m}$ has empty interior.
Otherwise, its compactness would imply that $\overline B(f,r)$ is compact for some $f\in L^2([0,t])$ and $r>0$, and hence so is $\overline B(0,1)=r^{-1}\bigl(\overline B(f,r)-f\bigr)$.
This is impossible: for pairwise disjoint intervals $A_k\subset[0,t]$ of positive length, the functions $e_k\coloneqq\indicatorset{A_k}/\sqrt{|A_k|}$ belong to the closed unit ball and satisfy $\|e_k-e_\ell\|_{L^2}=\sqrt{2}$ for $k\ne\ell$, so they have no convergent subsequence.
The set in~\eqref{eq:algorithmic-kernels} is therefore contained in $\bigcup_{n,m\in\na}K_{n,m}$.
Since $L^2([0,t])$ is complete, the Baire category theorem implies that this countable union cannot contain any non-empty open subset.

There is also a measure-theoretic counterpart to this conclusion.
Recall that a subset $A$ of a Banach space $E$ is called shy if it is contained in a Borel set $B$ for which there exists a compactly supported Borel probability measure $\mu$ satisfying $\mu(x+B)=0$ for every $x\in E$; the complement of a shy set is called prevalent.
In finite-dimensional spaces, shyness is equivalent to having Lebesgue measure zero.
Moreover, every compact subset of an infinite-dimensional Banach space is shy, and every countable union of shy sets is shy; see \citet{hunt92prevalence}.
Since $L^2([0,t])$ is infinite-dimensional, each $K_{n,m}$ is shy, and hence so is the set in~\eqref{eq:algorithmic-kernels}.
Consequently,
\begin{equation}\label{eq:prevalent-nonalgorithmic}
\bigl\{h\in L^2([0,t]) : H^h\in\mathcal E\setminus\mathcal E(\psi)\bigr\}
\end{equation}
is prevalent in $L^2([0,t])$.

Hence $\mathcal E$ contains the entire family~\eqref{eq:brownian-elementary-family}, indexed by the infinite-dimensional space $L^2([0,t])$, whereas membership in $\mathcal E(\psi)$ is possible only for a set of parameters that is both contained in a countable union of compact sets with empty interior and shy.
Thus the passage from $\mathcal E$ to $\mathcal E(\psi)$ constitutes a substantial restriction, even though Proposition~\ref{prop:simple-process-approximation} shows that $\mathcal E(\psi)$ remains dense in $\mathcal E$ and can approximate every $H^h$ in the ucp topology.
\end{example}

Elements of $\mathcal{S}(\psi)$ -- or, more generally, of $\mathcal{S}$ -- admit an elementary stochastic integral.
Denote by $\mathcal{L}$ and $\mathcal{D}$ the spaces of real-valued càglàd and càdlàg $\FF$-adapted processes, respectively, identified up to indistinguishability and endowed with the topology $\T_{\mathrm{ucp}}$ of ucp-convergence.
For $X\in\mathcal{D}$ and $V\in\mathcal{S}$ with representation~\eqref{eq:simple-process}, define the elementary integral $I(V,X)=\int V\,\mathrm{d}X$ by
\begin{equation*}
I(V,X)_t \coloneqq \varphi_0X_0 + \sum_{i=1}^n \varphi_i \bigl(X_{t\wedge\tau_{i+1}}-X_{t\wedge\tau_i}\bigr), \qquad t\ge0.
\end{equation*}
The definition does not depend on the representation of $V$, and it yields an adapted càdlàg process, with $I(V,X)_0=V_0X_0$.
For a semimartingale $X$, let $L(X)$ denote the space of real-valued predictable $X$-integrable processes \citep[p.~130]{chou80integrales}.
We write $V\cdot X$ for the stochastic integral and retain the convention
\begin{equation*}
(V\cdot X)_t=V_0X_0+\int_{(0,t]}V_s\,\mathrm dX_s,\qquad t\ge0,
\end{equation*}
so that $V\cdot X=I(V,X)$ for $V\in\mathcal S$.
Every $V\in\mathcal L$ belongs to $L(X)$ \citep[Chapter~II, Section~4]{protter05integration}.
Let $\mathcal{SM}\subseteq\mathcal D$ denote the space of semimartingales, equipped with the semimartingale topology $\T_{\mathrm{sm}}$ \citep[p.~264]{emery79topologie}, which is metrizable and finer than $\T_{\mathrm{ucp}}$.

In continuous-time financial models, trading strategies are naturally required to be predictable: a portfolio position may depend on the past information, but not on the subsequent price movement.
Thus, extending the preceding approximation theory to general predictable integrands requires showing that algorithmic strategies generate (up to indistinguishable modifications) the predictable $\sigma$-algebra.
The next proposition establishes this property.

\begin{proposition}\label{prop:predictable-sigma-algebra}
Let $\mathcal P$ denote the predictable $\sigma$-algebra on $\Omega\times\replus$, and set
\begin{equation*}
\mathcal E_0\coloneqq\{Z\in\mathcal E\colon Z\text{ is indistinguishable from zero}\}.
\end{equation*}
For a class $\mathcal C$ of processes, let $\sigma(\mathcal C)$ denote the $\sigma$-algebra generated by the maps $(\omega,t)\mapsto V_t(\omega)$, $V\in\mathcal C$.
Then
\begin{equation*}
\mathcal P=\sigma(\mathcal E)=\sigma\bigl(\mathcal E(\psi)+\mathcal E_0\bigr)=\sigma\bigl(\mathcal S(\psi)+\mathcal E_0\bigr),
\end{equation*}
where $\mathcal C+\mathcal E_0\coloneqq\{U+Z\colon U\in\mathcal C,\ Z\in\mathcal E_0\}$.
\end{proposition}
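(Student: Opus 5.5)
We would prove the chain of equalities by establishing the inclusions
\begin{equation*}
\sigma(\mathcal E(\psi)+\mathcal E_0)\subseteq\sigma(\mathcal S(\psi)+\mathcal E_0)\subseteq\mathcal P=\sigma(\mathcal E)\subseteq\sigma(\mathcal E(\psi)+\mathcal E_0).
\end{equation*}
The first inclusion is trivial, since $\mathcal E(\psi)\subseteq\mathcal S(\psi)$.

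For the second inclusion, every element of $\mathcal S(\psi)$ is a simple predictable process: it is left-continuous and adapted on $(0,\infty)$, and its value at time $0$ is $\Fcal_0$-measurable. Hence it is $\mathcal P$-measurable. Every $Z\in\mathcal E_0$ is elementary with bounded $\Fcal_{t_i}$-measurable coefficients, and is therefore also predictable.

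The equality $\mathcal P=\sigma(\mathcal E)$ is classical. The inclusion $\sigma(\mathcal E)\subseteq\mathcal P$ follows because elementary processes are left-continuous and adapted. For the converse, recall that $\mathcal P$ is generated by the sets $F\times\{0\}$ with $F\in\Fcal_0$ and $F\times(s,t]$ with $F\in\Fcal_s$. The indicators of these sets are elements of $\mathcal E$, namely $\indicatorset{F}\indicatorset{\{0\}}$ and $\indicatorset{F}\indicatorset{(s,t]}$.

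The key step is $\sigma(\mathcal E)\subseteq\sigma(\mathcal E(\psi)+\mathcal E_0)$. It suffices to show that each generator $\indicatorset{F}\indicatorset{(s,t]}$, with $F\in\Fcal_s$, is measurable with respect to $\mathcal G\coloneqq\sigma(\mathcal E(\psi)+\mathcal E_0)$; the case $F\times\{0\}$ is identical. We proceed in three steps.
\begin{enumerate}[label=(\roman*)]
\item Apply Theorem~\ref{thm:random-variable-approximation}(b)(i) with $\Phi(x)=x^2$. This gives $f_m\in\mathcal{NN}_s(\psi)$ with $f_m\to\indicatorset{F}$ in $L^2$. Passing to a subsequence, $f_m\to\indicatorset{F}$ almost surely.
\item Let $N$ be the null set where convergence fails, and set $\varphi_m\coloneqq f_m$. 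The processes $U^m\coloneqq f_m\indicatorset{(s,t]}$ belong to $\mathcal E(\psi)$ and are $\mathcal G$-measurable. Their pointwise $\limsup$ is therefore $\mathcal G$-measurable. This $\limsup$ equals $\indicatorset{F}\indicatorset{(s,t]}$ off $N\times\replus$.
\item The remaining task is to absorb the discrepancy on $N\times(s,t]$, and this is exactly what $\mathcal E_0$ provides. Since $N\in\Fcal_0\subseteq\Fcal_s$ by completeness, the process $Z=\indicatorset{N}\indicatorset{(s,t]}$ lies in $\mathcal E_0$. Also $U^1+Z\in\mathcal E(\psi)+\mathcal E_0$ and $U^1\in\mathcal E(\psi)+\mathcal E_0$ (taking $0\in\mathcal E_0$). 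Hence $Z$ is $\mathcal G$-measurable as a difference, and so $N\times(s,t]\in\mathcal G$. Applying the same argument to $\indicatorset{F\cap N}\indicatorset{(s,t]}\in\mathcal E_0$ shows $(F\cap N)\times(s,t]\in\mathcal G$.
\end{enumerate}
Then $F\times(s,t]$ is obtained as $\bigl(\{\limsup_m U^m=1\}\setminus(N\times(s,t])\bigr)\cup\bigl((F\cap N)\times(s,t]\bigr)$, which lies in $\mathcal G$.

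The main obstacle is this null-set bookkeeping. Neural-network coefficients only approximate $\indicatorset{F}$ almost surely, never pointwise everywhere. This is precisely why $\mathcal E_0$ has to be adjoined, and the argument must verify that its elements can be isolated inside $\mathcal G$ using only sums $U+Z$.
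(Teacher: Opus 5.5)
Your proof is correct and follows essentially the paper's argument: you approximate coefficients in $L^2$ by neural networks, pass to almost sure convergence, and absorb the exceptional null set through $\mathcal E_0$. The paper does the approximation for a general $V\in\mathcal E$ via Proposition~\ref{prop:simple-process-approximation} and adds $Z^n=(V-U^n)\indicatorset{N}\in\mathcal E_0$ so that $U^n+Z^n\to V$ everywhere, whereas you work with indicators of predictable rectangles via Theorem~\ref{thm:random-variable-approximation}(b)(i) and patch the null-set rectangles set-theoretically; your $U^1+Z$ difference trick is also unnecessary, since $0\in\mathcal E(\psi)$ already gives $\mathcal E_0\subseteq\mathcal E(\psi)+\mathcal E_0$.
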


\begin{proof}
Recall that the predictable $\sigma$-algebra $\mathcal P$ is generated by the real-valued left-continuous adapted processes.
Since every elementary predictable process is left-continuous and adapted, we have $\sigma(\mathcal E)\subseteq\mathcal P$.
For the reverse inclusion, let $H$ be a real-valued left-continuous adapted process and put $c_n(x)\coloneqq(-n)\vee(x\wedge n)$ for $n\in\na$.
The processes
\begin{equation*}
H^n_t=c_n(H_0)\indicatorset{\{0\}}(t)+\sum_{k=0}^{n2^n-1}c_n(H_{k2^{-n}})\indicatorset{(k2^{-n},(k+1)2^{-n}]}(t),\qquad t\ge0,
\end{equation*}
belong to $\mathcal E$ and converge to $H$ at every point of $\Omega\times\replus$ by left-continuity, with convergence at time zero following from the initial term.
Thus $H$ is $\sigma(\mathcal E)$-measurable, and hence $\mathcal P=\sigma(\mathcal E)$.

Put $\mathcal G\coloneqq\sigma(\mathcal E(\psi)+\mathcal E_0)$ and fix $V\in\mathcal E$.
By Proposition~\ref{prop:simple-process-approximation} with $\Phi(x)=x^2$, choose $U^n\in\mathcal E(\psi)$ such that
\begin{equation*}
\mathbb E\bigl[((V-U^n)_\infty^\ast)^2\bigr]<2^{-n},\qquad n\in\na.
\end{equation*}
By Tonelli's theorem, the nonnegative random variable $\sum_{n=1}^{\infty}((V-U^n)_\infty^\ast)^2$ has finite expectation and is therefore finite almost surely.
Consequently, $(V-U^n)_\infty^\ast\to0$ almost surely.
Hence there is $N\in\mathcal N_\PP$ such that $U^n(\omega)$ converges uniformly on $\replus$ to $V(\omega)$ for every $\omega\notin N$.

Set $Z^n_t\coloneqq(V_t-U^n_t)\indicatorset{N}$ for $t\ge0$.
Since $N\in\Fcal_0$, a common refinement of the deterministic time partitions of $V$ and $U^n$ shows that $Z^n\in\mathcal E_0$.
The processes $U^n+Z^n$ agree with $V$ on $N\times\replus$ and converge to $V$ at every point of $N^{\mathsf c}\times\replus$.
They are $\mathcal G$-measurable, so their pointwise limit $V$ is $\mathcal G$-measurable.
Thus $\mathcal P=\sigma(\mathcal E)\subseteq\mathcal G$.

Finally, $\mathcal E(\psi)\subseteq\mathcal S(\psi)$ and every process in $\mathcal S(\psi)+\mathcal E_0$ is predictable.
Therefore
\begin{equation*}
\mathcal P=\mathcal G\subseteq\sigma\bigl(\mathcal S(\psi)+\mathcal E_0\bigr)\subseteq\mathcal P,
\end{equation*}
which proves the claim.
\end{proof}

Propositions~\ref{prop:simple-process-approximation} and~\ref{prop:predictable-sigma-algebra} now yield our main theorem, restated here for convenience.
\begin{theorem}\label{thm:ucp-approximation}
Let $X$ be a c\`{a}dl\`{a}g $\FF$-semimartingale and let $V\in L(X)$.
Then there exists a sequence $(V^n)_{n\in\na}\subset\mathcal E(\psi)$ such that
\begin{equation*}
\int V_s^n\, \mathrm{d}X_s \to \int V_s\, \mathrm{d}X_s, \quad n \to \infty,
\end{equation*}
with convergence in $\T_{\mathrm{sm}}$.
If $V\in\mathcal L$, then the sequence $(V^n)_{n\in\na}$ may be chosen in $\mathcal S(\psi)$, such that $V^n\to V$ in $\T_{\mathrm{ucp}}$ holds in addition to $V^n\cdot X\rightarrow V\cdot X$ in $\T_{\mathrm{sm}}$.
\end{theorem}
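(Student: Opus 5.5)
Both statements come from approximating in two stages, first by elementary predictable processes and then by algorithmic ones. For the first statement I use a monotone class argument in the metrizable topology $\T_{\mathrm{sm}}$. Let $\mathcal K$ be the set of bounded predictable $V$ for which $V\cdot X$ lies in the $\T_{\mathrm{sm}}$-closure of $\{U\cdot X\colon U\in\mathcal E(\psi)\}$. Note that $Z\in\mathcal E_0$ gives $Z\cdot X=0$ up to indistinguishability. Proposition~\ref{prop:simple-process-approximation} gives, for $V\in\mathcal E$ with representation~\eqref{eq:simple-process}, coefficients $\varphi_{i,m}\to\varphi_i$ in $L^2$ and hence in probability. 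The elementary integral is linear in the coefficients, so
\[
\sup_{t}\bigl|(V^m\cdot X)_t-(V\cdot X)_t\bigr|\le \sum_i|\varphi_{i,m}-\varphi_i|\,2X^\ast_\infty .
\]
This is not directly the $\T_{\mathrm{sm}}$ bound, so I argue instead via the É{}mery definition. For $|H|\le1$ predictable, $H\cdot((V^m-V)\cdot X)=\sum_i(\varphi_{i,m}-\varphi_i)\,\bigl((H\indicatorset{(\tau_i,\tau_{i+1}]})\cdot X\bigr)$ plus the time-zero term. Each factor $(H\indicatorset{(\tau_i,\tau_{i+1}]})\cdot X$ ranges over a set bounded in probability uniformly in $H$, since $\{H\cdot X\colon |H|\le1\}$ is ucp-bounded. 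Multiplying by random variables tending to zero in probability therefore gives ucp convergence uniformly in $H$, i.e.\ $\T_{\mathrm{sm}}$-convergence. Hence $\mathcal E\subseteq\mathcal K$.

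Next I check that $\mathcal K$ is a vector space closed under bounded pointwise convergence. Closure follows from dominated convergence for stochastic integrals in $\T_{\mathrm{sm}}$: if $V^k\to V$ pointwise with $|V^k|\le C$, then $V^k\cdot X\to V\cdot X$ in $\T_{\mathrm{sm}}$ (Émery). A diagonal argument in the metric then keeps closure. Since $\mathcal E$ is closed under multiplication and generates $\mathcal P$ by Proposition~\ref{prop:predictable-sigma-algebra}, the functional monotone class theorem yields that $\mathcal K$ contains all bounded predictable processes. Finally, for general $V\in L(X)$ I truncate, $V\indicatorset{\{|V|\le k\}}$. These integrals converge to $V\cdot X$ in $\T_{\mathrm{sm}}$, which is the defining property of $L(X)$ in the sense of Chou--Meyer--Stricker and Émery's dominated convergence. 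Another diagonal extraction gives the sequence in $\mathcal E(\psi)$. The main obstacle is the uniform-in-$H$ control in the first step; if the product argument is delicate, I would localize $X$ so that it decomposes into an $\mathcal H^2$-martingale plus an FV part and estimate both directly.

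For the second statement, with $V\in\mathcal L$, I use the standard Protter approximation by simple predictable processes via stopping times. Set $\tau^n_0=0$ and $\tau^n_{k+1}=\inf\{t>\tau^n_k\colon |V_{t+}-V_{\tau^n_k+}|>1/n\}\wedge n$, truncated to finitely many $k$ and with the coefficients clipped at $n$. This gives $W^n\in\mathcal S$ with $W^n\to V$ in $\T_{\mathrm{ucp}}$. Proposition~\ref{prop:simple-process-approximation}, using Theorem~\ref{thm:random-variable-approximation}(b)(iii) under Assumption~\ref{ass:levy-driver}, then gives $V^n\in\mathcal S(\psi)$ with $\PP((V^n-W^n)^\ast_\infty>2^{-n})<2^{-n}$, so $V^n\to V$ in ucp. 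For the integrals, ucp convergence of càglàd integrands gives ucp convergence of integrals. For $\T_{\mathrm{sm}}$ I use the same uniform-in-$H$ argument, since $H\cdot(U\cdot X)=(HU)\cdot X$. Alternatively, I would invoke Émery's result that $U\mapsto U\cdot X$ is continuous from $(\mathcal L,\T_{\mathrm{ucp}})$ to $(\mathcal{SM},\T_{\mathrm{sm}})$.
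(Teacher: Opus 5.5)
Your proposal is correct and follows the paper's proof. Both use a functional monotone class argument on the bounded predictable processes whose integrals lie in the $\T_{\mathrm{sm}}$-closure of $\{U\cdot X\colon U\in\mathcal E(\psi)\}$, with stability under bounded pointwise limits from dominated convergence, then truncation $V\indicatorset{\{|V|\le k\}}$ for $V\in L(X)$. For $V\in\mathcal L$, both combine density of $\mathcal S$ in $(\mathcal L,\T_{\mathrm{ucp}})$ with Proposition~\ref{prop:simple-process-approximation} and continuity of $H\mapsto H\cdot X$.

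The only real deviation is the base case. You verify $\T_{\mathrm{sm}}$-convergence by hand, pulling the coefficient differences $\varphi_{i,m}-\varphi_i$ out of the integral and using boundedness in probability of $\{(H\cdot X)^\ast_t\colon |H|\le1\}$; the paper simply invokes that continuity. Two small repairs are needed for the monotone class theorem to apply. Work with the uniformly bounded elements of $\mathcal E$, since $\mathcal L^\infty$ coefficients are only essentially bounded. Also note that $\indicatorset{[0,m]}\to1$ puts the constants in your class.
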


\begin{proof}
We use the dominated convergence theorem for stochastic integrals \citep[p.~133, property~j)]{chou80integrales}: if a sequence $(K^n)_{n\in\na}$ of predictable processes converges pointwise to a predictable process $K$, and $|K^n|\vee|K|\le G$ for some $G\in L(X)$, then $K^n\cdot X\to K\cdot X$ in $\T_{\mathrm{sm}}$.
Recall that for fixed $X$, stochastic integration defines a continuous linear map
\begin{equation}\label{eq:stochastic-integral-map}
(\mathcal L,\T_{\mathrm{ucp}})\ni H \mapsto H\cdot X\in(\mathcal{SM},\T_{\mathrm{sm}}).
\end{equation}

For the first assertion, let $\mathcal K$ be the $\T_{\mathrm{sm}}$-closure of $\{U\cdot X:U\in\mathcal E(\psi)\}$, and let $\mathcal C$ be the class of bounded predictable processes $H$ such that $H\cdot X\in\mathcal K$.
Common refinements of deterministic trading grids show that $\mathcal E(\psi)$ is a vector space; hence $\mathcal K$ and $\mathcal C$ are vector spaces.
Put $\mathcal E_{\mathrm b}\coloneqq\{H\in\mathcal E:\sup_{(\omega,t)\in\Omega\times\replus}|H_t(\omega)|<\infty\}$.
We first have $\mathcal E_{\mathrm b}\subseteq\mathcal C$.
Indeed, for $H\in\mathcal E_{\mathrm b}$, Proposition~\ref{prop:simple-process-approximation} gives $H^n\in\mathcal E(\psi)$ with $H^n\to H$ in ucp, and \eqref{eq:stochastic-integral-map} then gives $H^n\cdot X\to H\cdot X$ in $\T_{\mathrm{sm}}$.
The class $\mathcal C$ is stable under uniformly bounded pointwise convergence.
Indeed, if $H^n\in\mathcal C$, $|H^n|\le c$, and $H^n\to H$ pointwise, then $H$ is bounded and predictable, and the constant process $c$ belongs to $L(X)$.
Dominated convergence gives $H^n\cdot X\to H\cdot X$ in $\T_{\mathrm{sm}}$, so closedness of $\mathcal K$ yields $H\in\mathcal C$.

Moreover, $\indicatorset{[0,m]}\in\mathcal E_{\mathrm b}\subseteq\mathcal C$ and $\indicatorset{[0,m]}\to1$ pointwise, so $1\in\mathcal C$.
The class $\mathcal E_{\mathrm b}+\re$ is an algebra contained in $\mathcal C$ and generates $\mathcal P$: the truncated pointwise approximants in the proof of Proposition~\ref{prop:predictable-sigma-algebra} belong to $\mathcal E_{\mathrm b}$.
The functional monotone class theorem \citep[Theorem~7.4.1]{cohen15calculus} therefore shows that $\mathcal C$ contains every bounded predictable process.
Now let $V\in L(X)$ and put $V^{(m)}=V\indicatorset{\{|V|\le m\}}$.
Then $V^{(m)}$ is bounded and predictable, hence $V^{(m)}\cdot X\in\mathcal K$.
Since $V^{(m)}\to V$ pointwise and $|V^{(m)}|\le|V|$, dominated convergence gives $V^{(m)}\cdot X\to V\cdot X$ in $\T_{\mathrm{sm}}$.
Thus $V\cdot X\in\mathcal K$.
Metrizability of $\T_{\mathrm{sm}}$ now yields a sequence $(V^n)_{n\in\na}\subset\mathcal E(\psi)$ with $V^n\cdot X\to V\cdot X$ in $\T_{\mathrm{sm}}$.

For the second assertion, suppose in addition that $V\in\mathcal L$.
Simple predictable processes with almost surely finite coefficients are dense in $(\mathcal L,\T_{\mathrm{ucp}})$ \citep[Theorem~II.4.10]{protter05integration}.
Truncating their finitely many coefficients shows that $\mathcal S$ is also dense in $(\mathcal L,\T_{\mathrm{ucp}})$.
Together with Proposition~\ref{prop:simple-process-approximation}, this implies that $\mathcal S(\psi)$ is dense in $(\mathcal L,\T_{\mathrm{ucp}})$.
Hence there exists $(V^n)_{n\in\na}\subset\mathcal S(\psi)$ with $V^n\to V$ in ucp, and the continuity of \eqref{eq:stochastic-integral-map} gives $V^n\cdot X\to V\cdot X$ in $\T_{\mathrm{sm}}$ for the same sequence.
\end{proof}

\begin{remark}
Theorem~\ref{thm:ucp-approximation} is the main approximation result.
An element of $\mathcal{S}(\psi)$ is a trading strategy whose positions $\varphi$ are neural network functions of the trading time and finitely many stopped observations $(\tau, Y_{t_1}^\tau, \ldots, Y_{t_n}^\tau)$.
For a fixed càdlàg $\FF$-semimartingale price process $X$, the theorem shows that any strategy $V \in \mathcal{L}$ and its stochastic integral process $\int V\, \mathrm{d}X$ can both be approximated in $\T_{\mathrm{ucp}}$ and $\T_{\mathrm{sm}}$ by algorithmic strategies and their stochastic integrals, respectively.
If $V$ is predictable and $X$-integrable, then the stochastic integral $\int V\, \mathrm{d}X$ can still be approximated in $\T_{\mathrm{sm}}$ by stochastic integrals of algorithmic strategies in $\mathcal E(\psi)$.
In this sense algorithmic strategies form a dense approximation class for stochastic integrands and their integrals arising in mathematical finance.
A version of Theorem~\ref{thm:ucp-approximation} for predictable integrands and a stronger topology is provided below.
\end{remark}

For $p\in[1,\infty)$, write $\mathcal H_0^p=\{M\in\mathcal H^p:M_0=0\}$.
Let $\mathcal H_{\mathrm{sm}}^p$ consist of the real-valued special semimartingales $Z=N+C$ whose canonical decomposition has $N\in\mathcal H_0^p$ and a predictable finite variation part $C$ with $C_0=0$ and $\operatorname{Var}(C)_\infty\in L^p(\PP)$, where $\operatorname{Var}(C)$ denotes the total variation process.
Equip this space with the norm
\begin{equation*} \|Z\|_{\mathcal H_{\mathrm{sm}}^p}\coloneqq\|N\|_{\mathcal H^p}+\|\operatorname{Var}(C)_\infty\|_p. \end{equation*}
For $X\in\mathcal H_{\mathrm{sm}}^p$ with canonical decomposition $X=M+A$, put $B=\operatorname{Var}(A)$ and let $L^p(X)$ consist of the real-valued predictable processes $V$ for which
\begin{equation*} \|V\|_{L^p(X)}\coloneqq\big\|(V^2\cdot [M])_\infty^{1/2}\big\|_p+\big\|(|V|\cdot B)_\infty\big\|_p \end{equation*}
is finite.
The integrals in this definition are pathwise Lebesgue--Stieltjes integrals with respect to the increasing processes $[M]$ and $B$.
When $A=0$, $\|\cdot\|_{L^p(X)}=\|\cdot\|_{L^p(M)}$.
Every $V\in L^p(X)$ is $X$-integrable, with $V\cdot X\in\mathcal H_{\mathrm{sm}}^p$ and, for some constant $C_p<\infty$ depending only on $p$,
\begin{equation*}
\|V\cdot X\|_{\mathcal H_{\mathrm{sm}}^p}\le(C_p\vee1)\|V\|_{L^p(X)}.
\end{equation*}
Indeed, $V\cdot M\in\mathcal H_0^p$ with $[V\cdot M]=V^2\cdot [M]$ by \citet[Corollary~12.3.6]{cohen15calculus}, while $V\cdot A$ is predictable and satisfies $\operatorname{Var}(V\cdot A)_\infty\le(|V|\cdot B)_\infty$.

\begin{theorem}\label{thm:lp-integrand-approximation}
Let $p\in[1,\infty)$ and $X\in\mathcal H_{\mathrm{sm}}^p$, and suppose that $\psi$ satisfies Assumption~\ref{itm:sigmoidal-activation}.
Then $\mathcal E(\psi)$ is dense in $L^p(X)$ with respect to $\|\cdot\|_{L^p(X)}$.
In particular, every $U\in L^p(X)$ admits a sequence $(U^n)_{n\in\na}\subset\mathcal E(\psi)$ such that
\begin{equation*} \|U-U^n\|_{L^p(X)}+\|(U-U^n)\cdot X\|_{\mathcal H_{\mathrm{sm}}^p}\rightarrow0. \end{equation*}
\end{theorem}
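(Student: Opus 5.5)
The plan is a density argument in three reductions: first to bounded predictable integrands, then to elementary strategies, and finally to algorithmic ones. The ``in particular'' statement is immediate once density is known, because the estimate $\|V\cdot X\|_{\mathcal H_{\mathrm{sm}}^p}\le(C_p\vee1)\|V\|_{L^p(X)}$, applied to $V=U-U^n$, turns convergence in $L^p(X)$ into convergence of the integrals in $\mathcal H_{\mathrm{sm}}^p$. Throughout we use that $\|\cdot\|_{L^p(X)}$ is a seminorm on predictable processes. It is also monotone in $|V|$, and it satisfies a dominated convergence property. Indeed, if $V^k\to V$ pointwise and $|V^k-V|\le 2G$ with $\|G\|_{L^p(X)}<\infty$, then pathwise dominated convergence gives $((V^k-V)^2\cdot[M])_\infty\to0$ and $(|V^k-V|\cdot B)_\infty\to0$ almost surely. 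Both quantities are dominated by $L^{p}$-integrable bounds coming from $G$, so a second dominated convergence, this time in $\PP$, gives $\|V^k-V\|_{L^p(X)}\to0$.

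\emph{Step 1 (truncation).} For $U\in L^p(X)$, put $U^{(m)}=U\indicatorset{\{|U|\le m\}}\indicatorset{[0,m]}$. These processes are bounded predictable, converge pointwise to $U$, and satisfy $|U^{(m)}|\le|U|$, so the dominated convergence property gives $U^{(m)}\to U$ in $L^p(X)$. The constant process $1$ lies in $L^p(X)$ since $X\in\mathcal H_{\mathrm{sm}}^p$: we have $\|[M]_\infty^{1/2}\|_p\le c_p\|M\|_{\mathcal H^p}$ by BDG, and $B_\infty\in L^p$. Hence every bounded predictable process lies in $L^p(X)$.

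\emph{Step 2 (monotone class).} Let $\mathcal C$ be the set of bounded predictable $H$ that belong to the $L^p(X)$-closure $\overline{\mathcal E(\psi)}$. This is a vector space, since $\mathcal E(\psi)$ is a vector space by common refinement of grids. By Step~1's dominated convergence with $G$ a constant, $\mathcal C$ is closed under uniformly bounded pointwise limits. It contains the constants by the argument with $\indicatorset{[0,m]}$, as in the proof of Theorem~\ref{thm:ucp-approximation}. The key inclusion is $\mathcal E_{\mathrm b}\subseteq\mathcal C$. Given $H\in\mathcal E_{\mathrm b}$ with $|H|\le c$ and grid up to time $T$, Proposition~\ref{prop:simple-process-approximation} yields $H^n\in\mathcal E(\psi)$ on the same grid with $(H-H^n)^\ast_\infty\to0$ in probability. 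Then
\begin{equation*}
\|H-H^n\|_{L^p(X)}\le\big\|(H-H^n)^\ast_\infty\,[M]_\infty^{1/2}\big\|_p+\big\|(H-H^n)^\ast_\infty\,B_\infty\big\|_p .
\end{equation*}

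This bound is where the main obstacle sits. The coefficients of $H^n$ are network outputs, which are bounded but not uniformly in $n$, so the products need not converge in $L^p$. I would fix this by clipping. Replace each coefficient $\varphi_{i,n}$ of $H^n$ by the truncation $(-c-1)\vee(\varphi_{i,n}\wedge(c+1))$, but keep it inside $\mathcal{NN}_{\tau_i}(\psi)$ by choosing the approximations more carefully. Concretely, I would go back to Theorem~\ref{thm:random-variable-approximation}(b)(i) and pass to an almost surely convergent subsequence. Convergence in probability, which follows from Lemma~\ref{lem:orlicz-mean-convergence}, gives such a subsequence, along which $(H-H^n)^\ast_\infty\to0$ almost surely. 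If uniform boundedness still fails, I would instead work with
\begin{equation*}
\|H-H^n\|_{L^p(X)}\le\sum_i\big\|\,|\varphi_i-\varphi_{i,n}|\,Z\big\|_p,\qquad Z=[M]_\infty^{1/2}+B_\infty\in L^p .
\end{equation*}
Here I would approximate $\varphi_i$ in the weighted space $L^p(Z^p\,\mathrm d\PP)$, a finite measure on $\Omega$, applying the Orlicz density argument of Theorem~\ref{thm:random-variable-approximation}(a) (via Theorem~\ref{thm:orlicz-approximation} under the image of the finite measure $Z^p\,\mathrm d\PP$) to the generating observations. This gives $\||\varphi_i-\varphi_{i,n}|Z\|_p\to0$ directly. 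Conditional expectations under the weighted measure still converge by Theorem~\ref{thm:orlicz-conditional-expectations} applied to that finite measure, so Step~2's key inclusion holds.

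\emph{Step 3 (conclusion).} Since $\mathcal E_{\mathrm b}+\re$ is an algebra generating $\mathcal P$ by Proposition~\ref{prop:predictable-sigma-algebra}, the functional monotone class theorem gives that $\mathcal C$ contains all bounded predictable processes. Combining this with Step~1 shows that $\mathcal E(\psi)$ is dense in $L^p(X)$, and the BDG-type estimate then yields convergence of the integrals in $\mathcal H_{\mathrm{sm}}^p$.
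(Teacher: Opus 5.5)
Your final route is essentially the paper's proof: approximate each bounded coefficient in $L^p$ of the weighted finite measure $Z^p\,\mathrm d\PP$, then apply the functional monotone class theorem, then truncate. The paper uses the interval weight $w_{s,t}=([M]_t-[M]_s)^{p/2}+(B_t-B_s)^p$ and applies Theorem~\ref{thm:orlicz-approximation} directly on $(\Omega,\Fcal_s^0,\mu_{s,t})$ with $\Gamma_s=\operatorname{span}\{Y_r^j\colon 0\le r\le s,\ 1\le j\le d\}$, so it needs no conditional-expectation reduction. Drop the clipping detour: clipped networks are no longer in $\mathcal{NN}_{t_i}(\psi)$, and almost sure convergence along a subsequence gives no $L^p$ control of $(H-H^n)^\ast_\infty[M]_\infty^{1/2}$ without a dominating bound (also, your displayed estimate needs a factor $2$ when $p>1$).
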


\begin{proof}
Write $X=M+A$ and $B=\operatorname{Var}(A)$.
By the Burkholder--Davis--Gundy inequality \citep[Theorem~11.5.5]{cohen15calculus} and the assumption on $B$, we have $\smash{\mathbb E[[M]_\infty^{p/2}+B_\infty^p]<\infty}$.
Hence every bounded predictable process belongs to $L^p(X)$.
Fix $0\le s<t<\infty$ and a bounded $\Fcal_s$-measurable random variable $\xi$.
Since $\Fcal_s=\sigma(\mathcal N_\PP\cup\Fcal_s^0)$, we may replace $\xi$ by a bounded $\Fcal_s^0$-measurable version.
Set $w_{s,t}=([M]_t-[M]_s)^{p/2}+(B_t-B_s)^p$ and $\mu_{s,t}(D)=\mathbb E[w_{s,t}\indicatorset{D}]$ for $D\in\Fcal_s^0$.
Then $\mu_{s,t}$ is finite and $\mu_{s,t}\ll\PP$.
If $\mu_{s,t}=0$, take $\xi_n=0$.
Otherwise, apply Theorem~\ref{thm:orlicz-approximation} on $(\Omega,\Fcal_s^0,\mu_{s,t})$ with $\Phi(x)=|x|^p$ and $\Gamma_s=\operatorname{span}\{Y_r^j:0\le r\le s,\ 1\le j\le d\}$.
Since $\sigma(\Gamma_s)=\Fcal_s^0$ and the Orlicz norm associated with $\Phi$ is equivalent to the $L^p$-norm, there exist $\xi_n\in\mathcal{NN}_s(\psi)$ such that $\|\xi-\xi_n\|_{L^p(\mu_{s,t})}\to0$.
Moreover,
\begin{equation*} \|(\xi-\xi_n)\indicatorset{(s,t]}\|_{L^p(X)}\le2\|\xi-\xi_n\|_{L^p(\mu_{s,t})}\rightarrow0. \end{equation*}
Taking finite sums therefore approximates every bounded element of $\mathcal E$.

Let $\mathcal C$ be the bounded predictable processes in the $L^p(X)$-closure of $\mathcal E(\psi)$.
Since $\mathcal E(\psi)$ is a vector space, so is $\mathcal C$.
Let $V_n\in\mathcal C$ satisfy $|V_n|\le K<\infty$ and $V_n\to V$ pointwise.
Hence $|V|\le K$, and $\smash{\mathbb E[[M]_\infty^{p/2}+B_\infty^p]<\infty}$ implies that $[M]_\infty$ and $B_\infty$ are finite almost surely, so pathwise dominated convergence gives $((V_n-V)^2\cdot [M])_\infty^{p/2}\to0$ and $(|V_n-V|\cdot B)_\infty^p\to0$ almost surely.
The respective terms are bounded by $(2K)^p[M]_\infty^{p/2}$ and $(2K)^pB_\infty^p$.
The dominated convergence theorem therefore yields $\|V_n-V\|_{L^p(X)}\to0$, so $V\in\mathcal C$.
Since $\indicatorset{[0,n]}\in\mathcal E$ and $\indicatorset{[0,n]}\to1$ pointwise, the class $\mathcal C$ contains the constants.
The bounded elements of $\mathcal E$ are closed under multiplication and generate $\mathcal P$ by Proposition~\ref{prop:predictable-sigma-algebra}.
The functional monotone class theorem \citep[Theorem~7.4.1]{cohen15calculus} therefore implies that $\mathcal C$ contains every bounded predictable process.

For $U\in L^p(X)$, set $U^{(n)}=U\indicatorset{\{|U|\le n\}}$.
Then $((U-U^{(n)})^2\cdot [M])_\infty^{p/2}\to0$ and $(|U-U^{(n)}|\cdot B)_\infty^p\to0$ almost surely.
These terms are bounded by $(U^2\cdot [M])_\infty^{p/2}$ and $(|U|\cdot B)_\infty^p$, respectively.
Dominated convergence gives $\|U-U^{(n)}\|_{L^p(X)}\to0$.
Hence $\mathcal E(\psi)$ is dense in $L^p(X)$.
\end{proof}

\subsection{Mean-variance hedging under partial information}\label{subsec:mean-variance-hedging}
Theorem~\ref{thm:lp-integrand-approximation} gives density of $\mathcal E(\psi)$ in $L^2(X)$ for $X\in\mathcal H_{\mathrm{sm}}^2$.
We apply this approximation result to mean-variance hedging, a classical quadratic hedging criterion for incomplete markets \citep{rheinlaender97projections,schweizer01quadratic}.
In addition to restricting trading strategies to the algorithmic class, we allow the trader's information to be generated by a process whose filtration may be strictly smaller than $\FF$.
For such a filtration, we show that restricting trading strategies to the corresponding algorithmic class leaves the minimal mean-variance hedging error unchanged.

Let $X\in\mathcal H_{\mathrm{sm}}^2$, let $H\in L^2(\PP)$ be real-valued and $\Fcal_\infty$-measurable, and suppose that $\psi$ satisfies Assumption~\ref{itm:sigmoidal-activation}.
Let $Z$ be a jointly measurable, $\FF$-adapted process with values in $\re^m$, and set $\mathcal G_t^0=\sigma(Z_s:0\le s\le t)$, $\mathcal G_t=\sigma(\mathcal G_t^0\cup\mathcal N_\PP)$, and $\mathbb G=(\mathcal G_t)_{t\ge0}$.
Write $\mathcal P_{\mathbb G}$ for the predictable $\sigma$-algebra of $\mathbb G$ and set $L^2_{\mathbb G}(X)=\{U\in L^2(X):U\text{ is }\mathbb G\text{-predictable}\}$.
Let $\mathcal E_{\mathbb G}(\psi)$ denote the class $\mathcal E(\psi)$ with $Z$ in place of $Y$.
Thus, the strategies in $\mathcal E_{\mathbb G}(\psi)$ have deterministic trading times and neural network coefficients depending on finitely many observations of $Z$ up to the corresponding trading time.
All stochastic integrals below are defined with respect to $\FF$; in particular, neither right-continuity of $\mathbb G$ nor $\mathbb G$-adaptedness of $X$ is assumed.
For $c\in\re$ and $U\in L^2(X)$, set $J_X(c,U)=\mathbb E[(H-c-(U\cdot X)_\infty)^2]$.
The mean-variance hedging problem under $\mathbb G$ is \begin{equation}\label{eq:mv-problem}\inf_{\substack{c\in\re\\U\in L^2_{\mathbb G}(X)}}J_X(c,U).\end{equation}
Let $\mathcal W_{\mathbb G}(X)=\{c+(U\cdot X)_\infty:c\in\re,\ U\in L^2_{\mathbb G}(X)\}$, and let $\eta^\ast$ be the orthogonal projection of $H$ onto $\overline{\mathcal W_{\mathbb G}(X)}^{\,L^2(\PP)}$.
By the Hilbert space projection theorem, the value of~\eqref{eq:mv-problem} is $\|H-\eta^\ast\|_2^2$, and the infimum is attained if and only if $\eta^\ast\in\mathcal W_{\mathbb G}(X)$.
In particular, the infimum is attained whenever $\mathcal W_{\mathbb G}(X)$ is closed in $L^2(\PP)$.

Quadratic hedging under restricted information has been studied by \citet{schweizer94restricted} for risk minimization and by \citet{mania08partial} for mean-variance hedging, while \citet{ceci14restricted} develop Galtchouk--Kunita--Watanabe representations under restricted information.
Suppose that $X=M\in\mathcal H_0^2$, and define the finite measure $\nu$ on the $\FF$-predictable $\sigma$-algebra $\mathcal P$ by $\nu(D)=\mathbb E[\int_{(0,\infty)}\indicatorset{D}(\omega,t)\,\mathrm d[M]_t]$ for $D\in\mathcal P$.
After identifying integrands of zero $L^2(M)$-distance, the stochastic integral isometry identifies $L^2(M)$ with $L^2(\mathcal P,\nu)$.
The elements of $L^2(\mathcal P,\nu)$ admitting a $\mathcal P_{\mathbb G}$-measurable representative form a closed subspace; denote the corresponding orthogonal projection by $P_{\mathbb G}^M$.
Projecting $H-\mathbb E[H]$ onto the closed subspace of terminal stochastic integrals yields $U^\ast\in L^2(M)$ and $R\in L^2(\PP)$ such that $H=\mathbb E[H]+(U^\ast\cdot M)_\infty+R$, where $\mathbb E[R]=0$ and $\mathbb E[R(V\cdot M)_\infty]=0$ for every $V\in L^2(M)$.
Let $U^\ast_{\mathbb G}$ be a $\mathbb G$-predictable representative of $P_{\mathbb G}^M U^\ast$.
Orthogonality and the stochastic integral isometry give \begin{equation}\label{eq:mv-information}\inf_{\substack{c\in\re\\U\in L^2_{\mathbb G}(M)}}J_M(c,U)=\mathbb E[R^2]+\|U^\ast-U^\ast_{\mathbb G}\|_{L^2(\nu)}^2.\end{equation}
The infimum in~\eqref{eq:mv-information} is attained for $c=\mathbb E[H]$ and $U=U^\ast_{\mathbb G}$, and $U^\ast_{\mathbb G}$ is unique $\nu$-almost everywhere among minimizing integrands.
If $Z=Y$, then $\mathbb G=\FF$, so $U^\ast_{\mathbb G}=U^\ast$ $\nu$-almost everywhere and the second term on the right-hand side of~\eqref{eq:mv-information} vanishes.
The following result shows that restricting the integrand further to the algorithmic class preserves the value of the hedging problem and, in the martingale case, permits approximation of the minimizing integrand itself.

\begin{theorem}\label{thm:mean-variance-hedging}
Under the preceding assumptions, the following statements hold.
\begin{enumerate}[label=(\alph*)]
\item We have
\begin{equation}\label{eq:mv-optimal-value}\inf_{\substack{c\in\re\\U\in\mathcal E_{\mathbb G}(\psi)}}J_X(c,U)=\inf_{\substack{c\in\re\\U\in L^2_{\mathbb G}(X)}}J_X(c,U).\end{equation}
If $(c^\ast,U^\ast)$ attains the infimum on the right-hand side of~\eqref{eq:mv-optimal-value}, then for every $\delta>0$, there exists $U\in\mathcal E_{\mathbb G}(\psi)$ such that $J_X(c^\ast,U)<J_X(c^\ast,U^\ast)+\delta$.
\item If $X=M\in\mathcal H_0^2$, then there exists a sequence $(U^n)_{n\in\na}\subset\mathcal E_{\mathbb G}(\psi)$ such that
\begin{equation*}\|U^n-U^\ast_{\mathbb G}\|_{L^2(M)}+\|(U^n-U^\ast_{\mathbb G})\cdot M\|_{\mathcal H^2}\rightarrow0.\end{equation*} 
Consequently, $J_M(\mathbb E[H],U^n)\to J_M(\mathbb E[H],U^\ast_{\mathbb G})$, and the latter equals the infimum in~\eqref{eq:mv-information}.
\end{enumerate}
\end{theorem}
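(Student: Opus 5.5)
The plan is to establish a $\mathbb G$-version of Theorem~\ref{thm:lp-integrand-approximation}: $\mathcal E_{\mathbb G}(\psi)$ is dense in $L^2_{\mathbb G}(X)$ with respect to $\|\cdot\|_{L^2(X)}$. Both parts then follow from continuity of $(c,U)\mapsto J_X(c,U)$.

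First we check that $\mathcal E_{\mathbb G}(\psi)\subseteq L^2_{\mathbb G}(X)$. Its elements are $\mathbb G$-predictable because their coefficients are functions of $Z_{t_1},\ldots,Z_{t_n}$ with $t_i\le s$, hence $\mathcal G_s$-measurable. They are bounded, and bounded predictable processes lie in $L^2(X)$ since $\mathbb E[[M]_\infty+B_\infty^2]<\infty$. Here we use that $\mathbb G\subseteq\FF$ (because $Z$ is $\FF$-adapted), so $\mathbb G$-predictable processes are $\FF$-predictable.

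Next we rerun the proof of Theorem~\ref{thm:lp-integrand-approximation} with $Z$ in place of $Y$. Fix $0\le s<t$ and a bounded $\mathcal G_s$-measurable $\xi$, and choose a $\mathcal G^0_s$-measurable version. Define $\mu_{s,t}(D)=\mathbb E[w_{s,t}\indicatorset{D}]$ for $D\in\mathcal G^0_s$, and apply Theorem~\ref{thm:orlicz-approximation} on $(\Omega,\mathcal G_s^0,\mu_{s,t})$ with $\Gamma_s=\operatorname{span}\{Z_r^j:r\le s\}$, which generates $\mathcal G^0_s$. Here $w_{s,t}$ is $\Fcal$-measurable but not $\mathcal G_s^0$-measurable; this is harmless, since $\mu_{s,t}$ is still a finite measure on $\mathcal G^0_s$. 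The resulting $\xi_n\in\mathcal{NN}$ of finitely many $Z$-observations satisfy
\[
\|(\xi-\xi_n)\indicatorset{(s,t]}\|_{L^2(X)}\le 2\|\xi-\xi_n\|_{L^2(\mu_{s,t})}\to0 .
\]
The monotone class step is the same, with the following replacement. The bounded $\mathbb G$-elementary processes are closed under multiplication and generate $\mathcal P_{\mathbb G}$, by the first half of the proof of Proposition~\ref{prop:predictable-sigma-algebra}, which uses only left-continuous $\mathbb G$-adapted processes. Truncation $U\indicatorset{\{|U|\le n\}}$ then passes from bounded to general elements of $L^2_{\mathbb G}(X)$. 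The one point to check is that this truncation preserves $\mathbb G$-predictability, which it does.

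For part (a), the estimate $\|V\cdot X\|_{\mathcal H^2_{\mathrm{sm}}}\le (C_2\vee1)\|V\|_{L^2(X)}$ gives
\[
\|(U\cdot X)_\infty-(U^n\cdot X)_\infty\|_2\le\|(U-U^n)\cdot X\|_{\mathcal H^2_{\mathrm{sm}}}\to0,
\]
using $|N_\infty|\le N^\ast_\infty$ and $|C_\infty|\le\operatorname{Var}(C)_\infty$. Hence $J_X(c,U^n)\to J_X(c,U)$ for each fixed $c$ by continuity of the $L^2$ norm. The inequality ``$\ge$'' in \eqref{eq:mv-optimal-value} is the inclusion $\mathcal E_{\mathbb G}(\psi)\subseteq L^2_{\mathbb G}(X)$. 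For ``$\le$'', take near-optimal pairs $(c,U)$ and approximate $U$. The $\delta$-statement is the same argument at $(c^\ast,U^\ast)$.

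For part (b), $U^\ast_{\mathbb G}\in L^2_{\mathbb G}(M)$, so the density gives $U^n\to U^\ast_{\mathbb G}$ in $L^2(M)$. By Doob/BDG (the bound above with $A=0$), $\|(U^n-U^\ast_{\mathbb G})\cdot M\|_{\mathcal H^2}\to0$. Then $J_M(\mathbb E[H],U^n)\to J_M(\mathbb E[H],U^\ast_{\mathbb G})$, which equals \eqref{eq:mv-information}.

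The main obstacle is the density step under $\mathbb G$. We must make sure that Theorem~\ref{thm:orlicz-approximation} applies to $\mathcal G^0_s$ without the Lévy structure of $Z$, which it does since only deterministic times are used, and that the generating-algebra argument of Proposition~\ref{prop:predictable-sigma-algebra} transfers verbatim to $\mathbb G$.
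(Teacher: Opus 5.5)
Your proposal is correct and follows essentially the same route as the paper. It proves a $\mathbb G$-version of the density argument of Theorem~\ref{thm:lp-integrand-approximation}, using the weighted measures $\mu_{s,t}$ on $\mathcal G_s^0$ with $\Gamma_s=\operatorname{span}\{Z_r^j\}$, a monotone class argument over $\mathcal P_{\mathbb G}$, and truncation. It then applies continuity of the terminal integral map in $L^2(\PP)$.

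The differences are minor. You obtain the $\delta$-claim from continuity of $J_X(c^\ast,\cdot)$ rather than from the Pythagorean identity. You also take the existence of $U^\ast_{\mathbb G}$ and the identity~\eqref{eq:mv-information} from the preamble, whereas the paper's proof re-verifies them: it shows that the $\mathcal P_{\mathbb G}$-measurable elements form a closed subspace of $L^2(\mathcal P,\nu)$, and uses the orthogonal decomposition of $H-c-(U\cdot M)_\infty$.
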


\begin{proof}
We first adapt the density argument of Theorem~\ref{thm:lp-integrand-approximation} to $\mathbb G$.
Since $Z$ is $\FF$-adapted and $\Fcal_t$ contains $\mathcal N_\PP$, we have $\mathcal G_t\subseteq\Fcal_t$, hence $\mathcal P_{\mathbb G}\subseteq\mathcal P$.
Write $X=M+A$ and $B=\operatorname{Var}(A)$.
By assumption, $\mathbb E[[M]_\infty+B_\infty^2]<\infty$, so every bounded $\mathbb G$-predictable process belongs to $L^2(X)$.
In particular, $\mathcal E_{\mathbb G}(\psi)\subseteq L^2_{\mathbb G}(X)$, because $\psi$ is bounded.
Note that by construction, $\mathcal E_{\mathbb G}(\psi)$ is a vector space.

Fix $0\le s<t<\infty$ and a bounded $\mathcal G_s$-measurable random variable $\xi$.
Since $\mathcal G_s$ is obtained by adjoining $\PP$-null sets to $\mathcal G_s^0$, the variable $\xi$ has a bounded $\mathcal G_s^0$-measurable version, which we use below.
Set
\begin{equation*}
w_{s,t}=[M]_t-[M]_s+(B_t-B_s)^2,\qquad
\mu_{s,t}(D)=\mathbb E[w_{s,t}\indicatorset{D}],\quad D\in\mathcal G_s^0.
\end{equation*}
This is a finite measure with $\mu_{s,t}\ll\PP$, so replacing $\xi$ by its chosen $\mathcal G_s^0$-measurable version does not change the relevant integrals.
If $\mu_{s,t}=0$, take $\xi_n=0$.
Otherwise, apply Theorem~\ref{thm:orlicz-approximation} with $\Phi(x)=x^2$ and
\begin{equation*}
\Gamma_s=\operatorname{span}\{Z_r^j:0\le r\le s,\ 1\le j\le m\}.
\end{equation*}
This family is closed under addition and subtraction and generates $\mathcal G_s^0$; moreover, $\Phi\in\Delta_2$ and its Orlicz norm is equivalent to the $L^2$-norm.
We obtain neural network coefficients $\xi_n$, each using finitely many observations up to $s$ such that $\|\xi-\xi_n\|_{L^2(\mu_{s,t})}\to0$.
For $\zeta_n=\xi-\xi_n$,
\begin{equation*}
\|\zeta_n\indicatorset{(s,t]}\|_{L^2(X)}=\bigl(\mathbb E[\zeta_n^2([M]_t-[M]_s)]\bigr)^{1/2}+\bigl(\mathbb E[\zeta_n^2(B_t-B_s)^2]\bigr)^{1/2}\le2\|\zeta_n\|_{L^2(\mu_{s,t})}\rightarrow0.
\end{equation*}
Finite sums therefore approximate every bounded elementary $\mathbb G$-predictable process with deterministic times.
Terms supported at time zero have zero $L^2(X)$-seminorm, since $M_0=A_0=0$.

Let $\mathcal C$ be the bounded $\mathbb G$-predictable processes that can be approximated in $L^2(X)$ by elements of $\mathcal E_{\mathbb G}(\psi)$.
This is a vector space.
If $V_n\in\mathcal C$, $|V_n|\le K$, and $V_n\to V$ pointwise, pathwise dominated convergence gives
\begin{equation*}
((V_n-V)^2\cdot [M])_\infty\rightarrow0,
\qquad
(|V_n-V|\cdot B)_\infty^2\rightarrow0
\end{equation*}
almost surely.
The respective terms are bounded by $4K^2[M]_\infty$ and $4K^2B_\infty^2$.
Dominated convergence under $\PP$ yields $\|V_n-V\|_{L^2(X)}\to0$.
Since $V$ is bounded and $\mathbb G$-predictable, and each $V_n$ lies in the $L^2(X)$-closure of $\mathcal E_{\mathbb G}(\psi)$, the triangle inequality shows that $V$ lies in this closure as well, hence $V\in\mathcal C$.
In particular, $\indicatorset{[0,n]}\to1$ shows that $\mathcal C$ contains the constants.
Bounded elementary $\mathbb G$-predictable processes are closed under multiplication and, together with constants, form an algebra generating $\mathcal P_{\mathbb G}$.
Indeed, the left-endpoint approximations of left-continuous adapted processes used in Proposition~\ref{prop:predictable-sigma-algebra} apply to any filtration.
The functional monotone class theorem now implies that $\mathcal C$ contains every bounded $\mathbb G$-predictable process.

For $U\in L^2_{\mathbb G}(X)$, put $U^{(n)}=U\indicatorset{\{|U|\le n\}}$.
The random variables $((U-U^{(n)})^2\cdot [M])_\infty$ and $(|U-U^{(n)}|\cdot B)_\infty^2$ tend to zero almost surely and are bounded by $(U^2\cdot [M])_\infty$ and $(|U|\cdot B)_\infty^2$, respectively.
Dominated convergence therefore gives $\|U-U^{(n)}\|_{L^2(X)}\to0$.
Choose $V_n\in\mathcal E_{\mathbb G}(\psi)$ with $\|V_n-U^{(n)}\|_{L^2(X)}<1/n$.
Then $\|V_n-U\|_{L^2(X)}\to0$.
This proves density within $L^2_{\mathbb G}(X)$.

For $U\in L^2(X)$, the integral $U\cdot M$ has a terminal value in $L^2(\PP)$, and $U\cdot A$ converges in $L^2(\PP)$ because its total variation is bounded by $(|U|\cdot B)_\infty\in L^2(\PP)$.
The terminal integral map is linear and continuous, with
\begin{equation*}
\|(U\cdot X)_\infty\|_2\le\|(U\cdot M)_\infty\|_2+\|(|U|\cdot B)_\infty\|_2=\|U\|_{L^2(X)}.
\end{equation*}
It follows from density and $\mathcal E_{\mathbb G}(\psi)\subseteq L^2_{\mathbb G}(X)$ that terminal wealths from $\mathcal E_{\mathbb G}(\psi)$ have the same $L^2(\PP)$-closure as $\mathcal W_{\mathbb G}(X)$.
Since the two classes of terminal wealths have the same $L^2(\PP)$-closure and $L^2(\PP)\ni\eta\mapsto\|H-\eta\|_2^2$ is continuous, their infimal distances from $H$ coincide.
Orthogonal projection onto this common closed linear space gives
\begin{equation*}
\|H-\eta\|_2^2=\|H-\eta^\ast\|_2^2+\|\eta-\eta^\ast\|_2^2,\qquad\eta\in\overline{\mathcal W_{\mathbb G}(X)}^{\,L^2(\PP)},
\end{equation*}
so the common infimum equals $\|H-\eta^\ast\|_2^2$. This proves~\eqref{eq:mv-optimal-value}.
If $(c^\ast,U^\ast)$ is optimal, then $c^\ast+(U^\ast\cdot X)_\infty=\eta^\ast$.
For $U\in\mathcal E_{\mathbb G}(\psi)$, the same identity yields
\begin{equation*}
J_X(c^\ast,U)-J_X(c^\ast,U^\ast)=\|((U-U^\ast)\cdot X)_\infty\|_2^2\le\|U-U^\ast\|_{L^2(X)}^2.
\end{equation*}
Approximation within distance $\sqrt\delta$ proves the last assertion of~(a).

For~(b), the stochastic integral isometry
\begin{equation*}
\|(V\cdot M)_\infty\|_2^2
=\mathbb E[(V^2\cdot [M])_\infty]
=\|V\|_{L^2(\nu)}^2
\end{equation*}
identifies $L^2(\mathcal P,\nu)$ with a closed linear subspace of centered random variables in $L^2(\PP)$.
Projecting $H-\mathbb E[H]$ onto this subspace gives $U^\ast\in L^2(M)$ and a centered remainder $R\in L^2(\PP)$ orthogonal to all terminal stochastic integrals, yielding the stated decomposition.
The elements admitting a $\mathcal P_{\mathbb G}$-measurable representative also form a closed subspace of $L^2(\mathcal P,\nu)$.
To see this, take $\mathcal P_{\mathbb G}$-measurable representatives of an $L^2(\nu)$-convergent sequence and extract a subsequence converging $\nu$-almost everywhere.
Its limit on the set on which the subsequence converges to a finite limit, extended by zero elsewhere, is a $\mathcal P_{\mathbb G}$-measurable representative of the $L^2$-limit.
Thus $P_{\mathbb G}^M$ exists.
For related Galtchouk--Kunita--Watanabe decompositions under restricted information, see \citet{ceci14restricted}.

For $c\in\re$ and $U\in L^2_{\mathbb G}(M)$, write
\begin{equation*}
H-c-(U\cdot M)_\infty=R+((U^\ast-U^\ast_{\mathbb G})\cdot M)_\infty+((U^\ast_{\mathbb G}-U)\cdot M)_\infty+\mathbb E[H]-c.
\end{equation*}
The first term is orthogonal to both stochastic integrals by construction.
The two integrals are orthogonal by the isometry and the defining property of $P_{\mathbb G}^M$.
All three random terms are centered, so they are orthogonal to the constant term.
Orthogonality yields~\eqref{eq:mv-information}, including attainment and uniqueness.
Choose $U^n\in\mathcal E_{\mathbb G}(\psi)$ with $\|U^n-U^\ast_{\mathbb G}\|_{L^2(M)}\to0$, as proved above.
Doob's inequality gives
\begin{equation*}
\|(U^n-U^\ast_{\mathbb G})\cdot M\|_{\mathcal H^2}
\le2\|U^n-U^\ast_{\mathbb G}\|_{L^2(M)}\rightarrow0,
\end{equation*}
and continuity of the terminal integral map gives convergence of the hedging errors.
Finally, $Z=Y$ implies $\mathbb G=\FF$, so $U^\ast_{\mathbb G}=U^\ast$ in $L^2(\nu)$.
\end{proof}

\subsection{No free lunch for algorithmic strategies}\label{subsec:no-free-lunch}
Theorem~\ref{thm:mean-variance-hedging}(a) shows that restricting integrands in $L_\mathbb{G}^2(X)$ to algorithmic strategies does not change the minimal mean-variance hedging error.
We next show that the same restriction preserves the classical relation between trading opportunities and martingale measures.
For bounded elementary predictable processes, suitable no-free-lunch conditions are characterized by the existence of an equivalent martingale measure.
Since algorithmic strategies form a smaller class, the absence of free lunches generated by them is a priori a weaker requirement.

Throughout this subsection, retain Standing Assumption~\ref{ass:levy-driver} and suppose that $\psi$ satisfies Assumption~\ref{itm:sigmoidal-activation}.
Fix $p\in[1,\infty)$ and $q\in(1,\infty]$ with $1/p+1/q=1$.
Let $X=(X_t)_{t\in[0,1]}$ be an $\FF$-adapted c\`{a}dl\`{a}g $\re^d$-valued process such that $X_t\in L^p(\PP)$ for every $t\in[0,1]$.
No semimartingale assumption is imposed on $X$.
Let $\mathcal K_\psi(X)$ denote the set of terminal gains from zero initial wealth of the form
\begin{equation}\label{eq:algo-gain}
G=\sum_{i=1}^n\sum_{j=1}^d\varphi_i^j\bigl(X_{t_{i+1}}^j-X_{t_i}^j\bigr).
\end{equation}
Here $n\in\na$, the times $0=t_1<\cdots<t_{n+1}=1$ are deterministic, and $\varphi_i^j\in\mathcal{NN}_{t_i}(\psi)$ for $i=1,\ldots,n$ and $j=1,\ldots,d$, as in Definition~\ref{def:neural-random-variables}.
Thus the initial term in the elementary integral convention is omitted.
Since $\psi$ is bounded, $\mathcal K_\psi(X)\subseteq L^p(\PP)$.
To include free lunches arising as $L^p$-limits of claims dominated by such gains, set
\begin{equation*}
\mathcal C_\psi(X)\coloneqq
\overline{\mathcal K_\psi(X)-\mathcal B_1^+}^{\,L^p(\PP)},
\end{equation*}
where $\mathcal B_1^+\coloneqq\{f\in L^\infty(\Omega,\Fcal_1,\PP):f\ge0\ \PP\text{-a.s.}\}$.
Specifically, we establish that $\mathcal C_\psi(X)\cap L^p_+(\PP)=\{0\}$ is equivalent to the existence of a probability measure $\QQ\sim\PP$ with $\mathrm d\QQ/\mathrm d\PP\in L^q(\PP)$ under which $X$ is an $\FF$-martingale.

The classical martingale measure characterization is obtained by separating attainable gains from the positive cone $L^p_+(\PP)$.
The $L^1$ separation theorem of \citet[Theorem~2]{yan80caracterisation}, extended to $L^p$ by \citet[Theorem~1]{ansel90remarques}, produces strictly positive $L^q$ functionals under the corresponding no-free-lunch conditions.
Let $\mathcal K_{\mathrm e}(X)$ denote the terminal gains in~\eqref{eq:algo-gain} obtained by replacing $\varphi_i^j\in\mathcal{NN}_{t_i}(\psi)$ with $\varphi_i^j\in\mathcal L^\infty(\Fcal_{t_i},\PP)$.
For this classical space of gains, \citet[Theorems~2 and~3]{stricker90arbitrage} give the relevant martingale measure characterizations.
The connection with algorithmic strategies follows from Theorem~\ref{thm:orlicz-approximation}: each bounded $\Fcal_{t_i}$-measurable coefficient can be approximated in the $L^p$-norm weighted by the corresponding price increment by elements of $\mathcal{NN}_{t_i}(\psi)$.
Consequently, $\mathcal K_\psi(X)$ and $\mathcal K_{\mathrm e}(X)$ have the same $L^p(\PP)$-closure, and the classical separation conditions can be transferred to the restricted class of gains.

\begin{theorem}\label{thm:equivalent-martingale-measures}
The following conditions are equivalent.
\begin{enumerate}[label=(\alph*)]
\item
There exists a probability measure $\QQ\sim\PP$ on $(\Omega,\Fcal)$ with $\mathrm d\QQ/\mathrm d\PP\in L^q(\PP)$ such that $X$ is an $\FF$-martingale under $\QQ$.
\item $\mathcal C_\psi(X)\cap L^p_+(\PP)=\{0\}$.
\end{enumerate}
If $X$ is continuous, conditions~\textup{(a)} and~\textup{(b)} are also equivalent to
\begin{enumerate}[label=(\alph*),resume]
\item $\overline{\mathcal K_\psi(X)}^{\,L^p(\PP)}\cap L^p_+(\PP)=\{0\}$.
\end{enumerate}
\end{theorem}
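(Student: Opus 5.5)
The plan is to reduce everything to the classical characterization of \citet[Theorems~2 and~3]{stricker90arbitrage} for the elementary gain space $\mathcal K_{\mathrm e}(X)$. The key intermediate claim is
\begin{equation*}
\overline{\mathcal K_\psi(X)}^{\,L^p(\PP)}=\overline{\mathcal K_{\mathrm e}(X)}^{\,L^p(\PP)}.
\end{equation*}
From this claim it follows at once that $\mathcal C_\psi(X)=\overline{\mathcal K_{\mathrm e}(X)-\mathcal B_1^+}^{\,L^p(\PP)}$, because $\overline{A+B}=\overline{\overline{A}+B}$ in a normed space. Condition (b) then coincides with Stricker's $L^p$ no-free-lunch condition, and condition (c) coincides with his condition for continuous $X$. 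Stricker's theorems, which rest on the Yan/Ansel--Stricker separation theorem, give equivalence with the existence of $\QQ\sim\PP$ with $\mathrm d\QQ/\mathrm d\PP\in L^q(\PP)$ under which $X$ is a martingale.

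The inclusion $\mathcal K_\psi(X)\subseteq\mathcal K_{\mathrm e}(X)$ holds because $\psi$ is bounded. For the converse inclusion of closures, linearity lets us treat a single term $\varphi(X^j_{t_{i+1}}-X^j_{t_i})$, where $\varphi$ is bounded and $\Fcal_{t_i}$-measurable. As in Theorem~\ref{thm:lp-integrand-approximation}, we replace $\varphi$ by a bounded $\Fcal^0_{t_i}$-measurable version. We then define the finite measure
\begin{equation*}
\mu(D)=\mathbb E\bigl[|X^j_{t_{i+1}}-X^j_{t_i}|^p\indicatorset{D}\bigr],\qquad D\in\Fcal^0_{t_i},
\end{equation*}
which is finite since $X_t\in L^p(\PP)$. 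If $\mu=0$, there is nothing to prove. Otherwise, we apply Theorem~\ref{thm:orlicz-approximation} on $(\Omega,\Fcal^0_{t_i},\mu)$ with $\Phi(x)=|x|^p$ and $\Gamma_0=\operatorname{span}\{Y^k_r:0\le r\le t_i,\ 1\le k\le d\}$. This family is closed under addition and subtraction and generates $\Fcal^0_{t_i}$. The theorem yields networks $\xi_n\in\mathcal{NN}_{t_i}(\psi)$ with
\begin{equation*}
\bigl\|(\varphi-\xi_n)(X^j_{t_{i+1}}-X^j_{t_i})\bigr\|_{L^p(\PP)}=\|\varphi-\xi_n\|_{L^p(\mu)}\to0.
\end{equation*}
One point needs checking here: a network in $\Gamma_0$ uses finitely many observations $Y_{r}$ with $r\le t_i$. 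After padding with times in the format of Definition~\ref{def:neural-random-variables}, it is an element of $\mathcal{NN}_{t_i}(\psi)$, since $Y^{t_i}_r=Y_r$ for $r\le t_i$.

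The step needing most care is matching Stricker's exact hypotheses. His result is stated with the trading grid and with the cone $\mathcal K_{\mathrm e}-L^\infty_+$ (or, for continuous $X$, just $\mathcal K_{\mathrm e}$), possibly with a finite horizon $[0,1]$ and $L^p$-integrability. I would verify that his gain space allows arbitrary deterministic grids, which our definition of $\mathcal K_{\mathrm e}(X)$ does. I would also verify that the martingale property under $\QQ$ is equivalent to $\mathbb E_\QQ[G]=0$ for all $G\in\mathcal K_{\mathrm e}(X)$; the required integrability follows from Hölder's inequality with $\mathrm d\QQ/\mathrm d\PP\in L^q$.

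For the direction (a)$\Rightarrow$(b), I would argue directly rather than cite. The functional $f\mapsto\mathbb E_\PP[f\,\mathrm d\QQ/\mathrm d\PP]$ is continuous on $L^p$, vanishes on $\mathcal K_\psi(X)$, is nonpositive on $-\mathcal B_1^+$, and is strictly positive on $L^p_+\setminus\{0\}$. The same argument gives (a)$\Rightarrow$(c).
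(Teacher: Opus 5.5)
Your proposal is correct and follows the paper's proof essentially step for step. It uses the same closure identity $\overline{\mathcal K_\psi(X)}=\overline{\mathcal K_{\mathrm e}(X)}$, obtained from Theorem~\ref{thm:orlicz-approximation} with the weighted measure $\mathbb E[|X^j_{t_{i+1}}-X^j_{t_i}|^p\indicatorset{D}]$ on $\Fcal^0_{t_i}$, and then Stricker's Theorems~2 and~3. The differences are minor: you prove (a)$\Rightarrow$(b) and (a)$\Rightarrow$(c) directly via the $L^q$ density functional, whereas the paper cites Stricker for both directions. The paper also records that, since $\mathcal C_\psi(X)\subseteq L^p(\Omega,\Fcal_1,\PP)$, condition (a) on $\Fcal$ is equivalent to Stricker's property on $\Fcal_1$ via $\mathbb E[\mathrm d\QQ/\mathrm d\PP\mid\Fcal_1]$. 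You should include this bookkeeping step when matching his hypotheses for (b)$\Rightarrow$(a).
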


\begin{proof}
Let $\mathcal K_{\mathrm e}(X)$ denote the corresponding space of terminal gains obtained by allowing arbitrary bounded $\Fcal_{t_i}$-measurable coefficients.
Since $\psi$ is bounded,
$\mathcal K_\psi(X)\subseteq\mathcal K_{\mathrm e}(X)\subseteq L^p(\PP)$.
We first show that
\begin{equation}\label{eq:gain-closure-equality}
\overline{\mathcal K_\psi(X)}^{\,L^p(\PP)}=\overline{\mathcal K_{\mathrm e}(X)}^{\,L^p(\PP)}.
\end{equation}

Fix $0\le s<t\le1$, $j\in\{1,\ldots,d\}$, and a bounded $\Fcal_s$-measurable random variable $\eta$.
Choose a bounded $\Fcal_s^0$-measurable version of $\eta$ and define the finite measure $\mu_{s,t}^j(A)\coloneqq\mathbb E[\indicatorset{A}|X_t^j-X_s^j|^p]$, $A\in\Fcal_s^0$.
Let $\Gamma_s\coloneqq \operatorname{span}\{Y_r^k:0\le r\le s,\ 1\le k\le d\}$, which generates $\Fcal_s^0$.
If $\mu_{s,t}^j\neq0$, Theorem~\ref{thm:orlicz-approximation}, applied with $\Phi(u)=|u|^p$, yields a sequence $\eta_m\in\mathcal{NN}(\Gamma_s,\psi)$ converging to $\eta$ in $L^p(\mu_{s,t}^j)$.
Each $\eta_m$ involves only finitely many elements of $\Gamma_s$, hence $\eta_m\in\mathcal{NN}_s(\psi)$.
If $\mu_{s,t}^j=0$, take $\eta_m=0$.
In either case,
\begin{equation*}
\|(\eta_m-\eta)(X_t^j-X_s^j)\|_{L^p(\PP)}^p=\int_\Omega|\eta_m-\eta|^p\,\mathrm d\mu_{s,t}^j\rightarrow0.
\end{equation*}
Applying this approximation to the finitely many coefficients of any $G\in\mathcal K_{\mathrm e}(X)$ and using the triangle inequality gives $G_m\in\mathcal K_\psi(X)$ with $G_m\to G$ in $L^p(\PP)$.
Together with $\mathcal K_\psi(X)\subseteq\mathcal K_{\mathrm e}(X)$, this proves~\eqref{eq:gain-closure-equality}.
Using $\mathcal K_\psi(X)\subseteq\mathcal K_{\mathrm e}(X)$ yields
\begin{equation}\label{eq:cone-closure-equality}
\mathcal C_\psi(X)=\overline{\mathcal K_{\mathrm e}(X)-\mathcal B_1^+}^{\,L^p(\PP)}.
\end{equation}
All random variables in~\eqref{eq:gain-closure-equality} and \eqref{eq:cone-closure-equality} are $\Fcal_1$-measurable.
Since $L^p(\Omega,\Fcal_1,\PP)$ is a closed subspace of $L^p(\PP)$, the same closures are obtained in $L^p(\Omega,\Fcal_1,\PP)$.

Condition~\textup{(a)} is equivalent to Stricker's property $\mathcal M^p$ on $(\Omega,\Fcal_1,(\Fcal_t)_{0\le t\le1},\PP)$.
Indeed, if $\QQ$ satisfies~\textup{(a)} with density $Z$, then $\mathbb E[Z\mid\Fcal_1]\in L^q(\PP)$ is strictly positive almost surely and is the density of $\QQ|_{\Fcal_1}$.
Conversely, an $\Fcal_1$-measurable density $Z\in L^q(\PP)$ of an equivalent martingale measure on $\Fcal_1$ defines such a measure on $\Fcal$ by $\QQ(A)=\mathbb E[Z\indicatorset{A}]$, $A\in\Fcal$.
The martingale property on $[0,1]$ is unchanged.

The space $\mathcal K_{\mathrm e}(X)$ coincides with the space of terminal gains from bounded elementary predictable strategies used by \citet{stricker90arbitrage}, up to the immaterial insertion of the endpoints $0$ and $1$ with zero coefficients.
Hence \citet[Theorem~2]{stricker90arbitrage}, together with \eqref{eq:cone-closure-equality}, shows that property $\mathcal M^p$ is equivalent to $\mathcal C_\psi(X)\cap L^p_+(\PP)=\{0\}$.
This proves the equivalence of~\textup{(a)} and~\textup{(b)}.

If $X$ is continuous, \citet[Theorem~3]{stricker90arbitrage} states that property $\mathcal M^p$ is equivalent to
\begin{equation*}
\overline{\mathcal K_{\mathrm e}(X)}^{\,L^p(\Omega,\Fcal_1,\PP)}\cap L^p_+(\Omega,\Fcal_1,\PP)=\{0\}.
\end{equation*}
By~\eqref{eq:gain-closure-equality}, this is equivalent to $\overline{\mathcal K_\psi(X)}^{\,L^p(\PP)}\cap L^p_+(\PP)=\{0\}$, which is condition~\textup{(c)}.
\end{proof}

\subsection{A Bichteler--Dellacherie characterization of semimartingales}\label{subsec:semimartingale-characterization}
The semimartingale property is fundamental to the results on stochastic integration developed above.
For a semimartingale $X$, stochastic integration maps $(\mathcal L,\T_{\mathrm{ucp}})$ continuously into $(\mathcal{SM},\T_{\mathrm{sm}})$.
We next characterize the semimartingale property of $X$ in terms of the terminal integrals generated by algorithmic strategies.
Such a characterization shows that algorithmic strategies are not only dense in spaces of stochastic integrands, but also suffice to characterize the class of integrators for which the preceding approximation theory applies.

Throughout this subsection, retain Standing Assumption~\ref{ass:levy-driver} and suppose that $\psi$ satisfies Assumption~\ref{itm:sigmoidal-activation}.
Fix $X\in\mathcal D$ and $T>0$, without assuming that $X$ is a semimartingale.
The Bichteler--Dellacherie theorem tests the semimartingale property through simple predictable integrands that are uniformly bounded by one.
To obtain the corresponding algorithmic class, set $\sigma=\tanh$ and define $\mathcal S(\psi;\sigma)\coloneqq\{\sigma\circ V:V\in\mathcal S(\psi)\}$, where $(\sigma\circ V)_t=\sigma(V_t)$.
Thus every process in this class takes values in $[-1,1]$, while its coefficient at each stopping time remains determined by a neural network of stopped observations.
The corresponding algorithmic criterion is boundedness in probability of
\begin{equation*}
\{I(V,X)_T:V\in\mathcal S(\psi;\sigma)\}.
\end{equation*}

Let $\mathcal S_{T,1}$ denote the simple predictable processes on $[0,T]$ that are uniformly bounded by one and whose initial position is zero.
Then $X$ is an $\FF$-semimartingale on $[0,T]$ if and only if $\{I(H,X)_T:H\in\mathcal S_{T,1}\}$ is bounded in probability \citep[Theorem~BD]{beiglbock14riemann}.
To pass from this classical criterion to algorithmic strategies, it suffices to recover the terminal integrals generated by $\mathcal S_{T,1}$ from those generated by $\mathcal S(\psi;\sigma)$.
The following theorem yields the classical characterization of semimartingales using only algorithmic strategies.

\begin{theorem}\label{thm:semimartingale-characterization}
Under the preceding assumptions, the following conditions are equivalent.
\begin{enumerate}[label=(\alph*)]
\item $X$ is an $\FF$-semimartingale on $[0,T]$.
\item The family $\{I(V,X)_T:V\in\mathcal S(\psi;\sigma)\}$ is bounded in probability.
\end{enumerate}
\end{theorem}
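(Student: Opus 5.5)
The plan is to reduce both directions to the classical criterion \citep[Theorem~BD]{beiglbock14riemann}: $X$ is an $\FF$-semimartingale on $[0,T]$ if and only if $\{I(H,X)_T:H\in\mathcal S_{T,1}\}$ is bounded in probability. Two elementary facts will be used throughout. First, since $\sigma=\tanh$ satisfies $\sigma(0)=0$, applying $\sigma$ to $V\in\mathcal S(\psi)$ with representation~\eqref{eq:simple-process} gives
\begin{equation*}
\sigma\circ V=\sigma(\varphi_0)\indicatorset{\{0\}}+\sum_{i=1}^n\sigma(\varphi_i)\indicatorset{(\tau_i,\tau_{i+1}]},
\end{equation*}
which is again simple predictable, bounded by one in absolute value, and has coefficients $\sigma(\varphi_i)\in\mathcal L^\infty(\Fcal_{\tau_i},\PP)$. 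Second, $I(\cdot,X)_T$ only involves the increments $X_{T\wedge\tau_{i+1}}-X_{T\wedge\tau_i}$, so every stopping time may be replaced by its minimum with $T$.

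For (a)$\Rightarrow$(b), fix $V\in\mathcal S(\psi)$ and split
\begin{equation*}
I(\sigma\circ V,X)_T=\sigma(\varphi_0)X_0+I(H,X)_T,\qquad H\coloneqq\sum_{i=1}^n\sigma(\varphi_i)\indicatorset{(\tau_i\wedge T,\tau_{i+1}\wedge T]}.
\end{equation*}
The process $H$ belongs to $\mathcal S_{T,1}$. The first term is dominated by $|X_0|$, a single finite random variable, so these terms form a family that is bounded in probability. The second terms are bounded in probability by the necessity part of Theorem~BD. A sum of two families bounded in probability is bounded in probability, which gives (b).

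For (b)$\Rightarrow$(a), the obstacle is that $\sigma$ maps onto the open interval $(-1,1)$, so coefficients equal to $\pm1$ cannot be reproduced exactly. I would handle this by halving. Fix $H=\sum_i h_i\indicatorset{(\tau_i,\tau_{i+1}]}\in\mathcal S_{T,1}$ with $\tau_{n+1}\le T$, and set
\begin{equation*}
g_i\coloneqq\operatorname{artanh}(h_i/2).
\end{equation*}
Since $|h_i/2|\le1/2$, each $g_i$ is bounded and $\Fcal_{\tau_i}$-measurable. By Theorem~\ref{thm:random-variable-approximation}(b)(iii), which applies under Standing Assumption~\ref{ass:levy-driver} with $\Phi(x)=x^2$, there are $\varphi_{i,m}\in\mathcal{NN}_{\tau_i}(\psi)$ with $\varphi_{i,m}\to g_i$ in $L^2$, hence in probability. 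Take $\varphi_{0,m}=0$, which belongs to $\mathcal{NN}_0(\psi)$ since the zero network lies in the span. Let $V^m$ be the resulting process in $\mathcal S(\psi)$. By continuity of $\tanh$, $\sigma(\varphi_{i,m})\to h_i/2$ in probability. Because there are finitely many terms and each increment $X_{\tau_{i+1}}-X_{\tau_i}$ is an almost surely finite random variable, it follows that
\begin{equation*}
I(\sigma\circ V^m,X)_T\to\tfrac12 I(H,X)_T\quad\text{in probability.}
\end{equation*}

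To conclude, I would record that the closure under convergence in probability of a family $\mathcal B$ that is bounded in probability is again bounded in probability. Indeed, if $Z_m\to Z$ in probability with $Z_m\in\mathcal B$, then $\PP(|Z|>c+1)\le\limsup_m\PP(|Z_m|>c)$. Hence $\{\tfrac12 I(H,X)_T:H\in\mathcal S_{T,1}\}$ is bounded in probability, and so is its double $\{I(H,X)_T:H\in\mathcal S_{T,1}\}$. The sufficiency part of Theorem~BD then yields (a). The only delicate point is the saturation issue resolved by halving; the remaining steps are routine bookkeeping of stopping times truncated at $T$.
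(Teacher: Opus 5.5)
Your proposal is correct and follows the paper's proof essentially step by step. For (a)$\Rightarrow$(b) you use the same splitting $I(\sigma\circ V,X)_T=\sigma(\varphi_0)X_0+I(H,X)_T$. For (b)$\Rightarrow$(a) you use the same ingredients: approximating $\operatorname{arctanh}$ of the coefficients via Theorem~\ref{thm:random-variable-approximation}\textup{(b)(iii)}, convergence in probability of the terminal integrals, and stability of boundedness in probability under closure.

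The only difference is cosmetic. You avoid the saturation of $\tanh$ with the fixed factor $1/2$ and rescale at the end, whereas the paper uses factors $a_m=1-m^{-1}\to1$ to approximate $I(H,X)_T$ itself. In both versions the coefficients should first be truncated to $[-1,1]$. This is harmless because they matter only on $\{\tau_i<\tau_{i+1}\}$, and the paper states this step explicitly.
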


\begin{proof}
Let $\mathcal S_{T,1}$ denote the simple predictable processes on $[0,T]$ with zero initial position and uniformly bounded by one.
By the Bichteler--Dellacherie theorem, $X$ is an $\FF$-semimartingale on $[0,T]$ if and only if the family
\begin{equation}\label{eq:bd-test-family}
\{I(H,X)_T:H\in\mathcal S_{T,1}\}
\end{equation}
is bounded in probability; see \citet[Theorem~BD]{beiglbock14riemann}.
The required hypotheses on $\FF$ follow from Standing Assumption~\ref{ass:levy-driver}.

Assume first that \textup{(a)} holds and fix $V\in\mathcal S(\psi;\sigma)$.
Write $V=\sigma\circ U$ with $U\in\mathcal S(\psi)$.
Since $\sigma(0)=0$ and $|\sigma|\le1$, the process $H\coloneqq V\indicatorset{(0,T]}$ belongs to $\mathcal S_{T,1}$.
Moreover, $I(V,X)_T=V_0X_0+I(H,X)_T$ and $|V_0X_0|\le|X_0|$.
Hence the Bichteler--Dellacherie theorem implies that $\{I(V,X)_T:V\in\mathcal S(\psi;\sigma)\}$ is bounded in probability.
Thus \textup{(a)} implies \textup{(b)}.

Conversely, assume \textup{(b)} and set $\mathcal A_T\coloneqq\{I(V,X)_T:V\in\mathcal S(\psi;\sigma)\}$.
Let $\overline{\mathcal A_T}$ denote the closure of $\mathcal A_T$ under convergence in probability.
We show that every terminal integral in~\eqref{eq:bd-test-family} belongs to $\overline{\mathcal A_T}$.
Fix $H\in\mathcal S_{T,1}$.
Choose a representation
\begin{equation*}
H=\sum_{i=1}^n\xi_i\indicatorset{(\tau_i,\tau_{i+1}]}(\cdot),
\end{equation*}
where $0=\tau_1\le\cdots\le\tau_{n+1}=T$ are stopping times and each $\xi_i$ is bounded and $\Fcal_{\tau_i}$-measurable.
Since $|H|\le1$, the coefficients may be chosen with values in $[-1,1]$.

For $m\ge2$, set $a_m=1-m^{-1}$ and, for $i=1,\ldots,n$, define $g_{i,m}\coloneqq\operatorname{arctanh}(a_m\xi_i)$.
Each $g_{i,m}$ is bounded and $\Fcal_{\tau_i}$-measurable.
Applying Theorem~\ref{thm:random-variable-approximation}\textup{(b)(iii)} with $\Phi(x)=x^2$, choose $f_{i,m}\in\mathcal{NN}_{\tau_i}(\psi)$ such that
\begin{equation*}
\|f_{i,m}-g_{i,m}\|_{L^2(\PP)}<m^{-1}.
\end{equation*}
Since $\sigma=\tanh$ is $1$-Lipschitz and $\sigma(g_{i,m})=a_m\xi_i$, we obtain
\begin{equation}\label{eq:stopping-time-coefficient-approximation}
\|\sigma(f_{i,m})-\xi_i\|_{L^2(\PP)}\le\|f_{i,m}-g_{i,m}\|_{L^2(\PP)}+(1-a_m)\|\xi_i\|_{L^2(\PP)}\le\frac{2}{m}.
\end{equation}

Define $U^m\coloneqq\sum_{i=1}^n f_{i,m}\indicatorset{(\tau_i,\tau_{i+1}]}(\cdot)$ and set $V^m\coloneqq\sigma\circ U^m$.
Then $V^m\in\mathcal S(\psi;\sigma)$.
By the elementary integral formula,
\begin{equation}\label{eq:simple-integral-approximation}
I(V^m,X)_T-I(H,X)_T
=
\sum_{i=1}^n
\bigl(\sigma(f_{i,m})-\xi_i\bigr)
\bigl(X_{\tau_{i+1}}-X_{\tau_i}\bigr).
\end{equation}
By~\eqref{eq:stopping-time-coefficient-approximation}, $\sigma(f_{i,m})-\xi_i\to0$ in probability for every $i$.
Since $X_{\tau_{i+1}}-X_{\tau_i}$ is finite almost surely, multiplication by this random variable preserves convergence in probability.
Since the sum in~\eqref{eq:simple-integral-approximation} contains only finitely many terms, it follows that
\begin{equation*}
I(V^m,X)_T\longrightarrow I(H,X)_T
\qquad\text{in probability}.
\end{equation*}
Consequently,
\begin{equation}\label{eq:bd-family-in-closure}
\{I(H,X)_T:H\in\mathcal S_{T,1}\}
\subseteq
\overline{\mathcal A_T}.
\end{equation}

It remains to show that boundedness in probability passes from $\mathcal A_T$ to $\overline{\mathcal A_T}$.
Fix $Z\in\overline{\mathcal A_T}$ and choose $Z_j\in\mathcal A_T$ such that $Z_j\to Z$ in probability.
For every $R>0$ and every $j\in\na$,
\begin{equation*}
\PP(|Z|>R)\le\PP(|Z-Z_j|>R/2)+\PP(|Z_j|>R/2).
\end{equation*}
Letting $j\to\infty$ yields
\begin{equation*}
\PP(|Z|>R)\le\sup_{W\in\mathcal A_T}\PP(|W|>R/2).
\end{equation*}
Taking the supremum over $Z\in\overline{\mathcal A_T}$ and then letting $R\to\infty$ shows that $\overline{\mathcal A_T}$ is bounded in probability.
By~\eqref{eq:bd-family-in-closure}, the family in~\eqref{eq:bd-test-family} is therefore bounded in probability.
The Bichteler--Dellacherie theorem implies that $X$ is an $\FF$-semimartingale on $[0,T]$.
Thus \textup{(b)} implies \textup{(a)}.
\end{proof}

\section{Conclusion}\label{sec:conclusion}
We have shown that restricting the coefficients of simple predictable processes to neural network functions preserves several classical results of stochastic integration and mathematical finance.
The resulting class is dense in spaces of predictable integrands, and the corresponding approximation extends to stochastic integrals.
It yields a Bichteler--Dellacherie characterization of semimartingales, approximation results for mean-variance hedging under partial information, and a no-free-lunch characterization in terms of equivalent martingale measures.
These results rest on a universal approximation theorem for neural networks in Orlicz spaces over general measurable spaces and on corresponding approximation results for measurable random variables.

The results on stochastic integration are developed under a filtration generated by a L\'evy process, and the approximation theorems are qualitative.
Natural extensions are to identify more general filtrations for which the same results hold and to obtain quantitative approximation rates and bounds on network complexity.
In mathematical finance, a further direction is to determine which results persist under portfolio constraints and transaction costs.
More generally, it remains to determine which other structured subclasses of elementary predictable processes suffice to recover the classical theory of stochastic integration.

\bibliographystyle{abbrvnat}
\bibliography{ctas-main}

\begin{thebibliography}{52}
\providecommand{\natexlab}[1]{#1}
\providecommand{\url}[1]{\texttt{#1}}
\expandafter\ifx\csname urlstyle\endcsname\relax
  \providecommand{\doi}[1]{doi: #1}\else
  \providecommand{\doi}{doi: \begingroup \urlstyle{rm}\Url}\fi

\bibitem[Ansel and Stricker(1990)]{ansel90remarques}
J.-P. Ansel and C.~Stricker.
\newblock Quelques remarques sur un th{\'e}or{\`e}me de {Yan}.
\newblock \emph{S{\'e}minaire de Probabilit{\'e}s}, 24:\penalty0 266--274, 1990.

\bibitem[Ansel and Stricker(1993)]{ansel93decomposition}
J.-P. Ansel and C.~Stricker.
\newblock D{\'e}composition de {Kunita--Watanabe}.
\newblock \emph{S{\'e}minaire de Probabilit{\'e}s}, 27:\penalty0 30--32, 1993.

\bibitem[Arandjelovi\'{c}(2024)]{arandjelovic24theory}
A.~Arandjelovi\'{c}.
\newblock \emph{Theory of Neural Networks with Applications in Finance, Insurance and Climate-Economy Modelling}.
\newblock PhD thesis, TU Wien and Macquarie University, 2024.

\bibitem[Arandjelovi\'{c} et~al.(2025)Arandjelovi\'{c}, Rheinl{\"a}nder, and Shevchenko]{arandjelovic25importance}
A.~Arandjelovi\'{c}, T.~Rheinl{\"a}nder, and P.~V. Shevchenko.
\newblock Importance sampling for option pricing with feedforward neural networks.
\newblock \emph{Finance and Stochastics}, 29:\penalty0 97--141, 2025.

\bibitem[Bach(2017)]{bach17convex}
F.~Bach.
\newblock Breaking the curse of dimensionality with convex neural networks.
\newblock \emph{Journal of Machine Learning Research}, 18\penalty0 (19):\penalty0 1--53, 2017.

\bibitem[Beiglb{\"o}ck and Siorpaes(2014)]{beiglbock14riemann}
M.~Beiglb{\"o}ck and P.~Siorpaes.
\newblock Riemann-integration and a new proof of the {Bichteler--Dellacherie} theorem.
\newblock \emph{Stochastic Processes and their Applications}, 124\penalty0 (3):\penalty0 1226--1235, 2014.

\bibitem[Bichteler(1979)]{bichteler79integrators}
K.~Bichteler.
\newblock Stochastic integrators.
\newblock \emph{Bulletin of the American Mathematical Society}, 1\penalty0 (5):\penalty0 761--765, 1979.

\bibitem[Bichteler(1981)]{bichteler81integration}
K.~Bichteler.
\newblock Stochastic integration and {$L^{p}$}-theory of semimartingales.
\newblock \emph{The Annals of Probability}, 9\penalty0 (1):\penalty0 49--89, 1981.

\bibitem[Billingsley(2012)]{billingsley12probability}
P.~Billingsley.
\newblock \emph{Probability and Measure}.
\newblock John Wiley \& Sons, 4th edition, 2012.

\bibitem[Bogachev(2007{\natexlab{a}})]{bogachev07measure1}
V.~I. Bogachev.
\newblock \emph{Measure Theory, Volume 1}.
\newblock Springer, 2007{\natexlab{a}}.

\bibitem[Bogachev(2007{\natexlab{b}})]{bogachev07measure2}
V.~I. Bogachev.
\newblock \emph{Measure Theory, Volume 2}.
\newblock Springer, 2007{\natexlab{b}}.

\bibitem[Buehler et~al.(2019)Buehler, Gonon, Teichmann, and Wood]{buehler19hedging}
H.~Buehler, L.~Gonon, J.~Teichmann, and B.~Wood.
\newblock Deep hedging.
\newblock \emph{Quantitative Finance}, 19\penalty0 (8):\penalty0 1271--1291, 2019.

\bibitem[Cauchy(1847)]{cauchy47methode}
A.-L. Cauchy.
\newblock M{\'e}thode g{\'e}n{\'e}rale pour la r{\'e}solution des syst{\`e}mes d'{\'e}quations simultan{\'e}es.
\newblock \emph{Comptes rendus hebdomadaires des s{\'e}ances de l'Acad{\'e}mie des sciences}, 25:\penalty0 536--538, 1847.

\bibitem[Ceci et~al.(2014)Ceci, Cretarola, and Russo]{ceci14restricted}
C.~Ceci, A.~Cretarola, and F.~Russo.
\newblock {GKW} representation theorem under restricted information: {An} application to risk-minimization.
\newblock \emph{Stochastics and Dynamics}, 14\penalty0 (2):\penalty0 1350019, 2014.

\bibitem[Cheridito(2003)]{cheridito03fractional}
P.~Cheridito.
\newblock Arbitrage in fractional {Brownian} motion models.
\newblock \emph{Finance and Stochastics}, 7:\penalty0 533--553, 2003.

\bibitem[Chou et~al.(1980)Chou, Meyer, and Stricker]{chou80integrales}
C.~S. Chou, P.-A. Meyer, and C.~Stricker.
\newblock Sur les int{\'e}grales stochastiques de processus pr{\'e}visibles non born{\'e}s.
\newblock \emph{S{\'e}minaire de Probabilit{\'e}s}, 14:\penalty0 128--139, 1980.

\bibitem[Cohen and Elliott(2015)]{cohen15calculus}
S.~N. Cohen and R.~J. Elliott.
\newblock \emph{Stochastic Calculus and Applications}.
\newblock Birkh{\"a}user, 2nd edition, 2015.

\bibitem[Cybenko(1989)]{cybenko89approximation}
G.~Cybenko.
\newblock Approximation by superpositions of a sigmoidal function.
\newblock \emph{Mathematics of Control, Signals, and Systems}, 2:\penalty0 303--314, 1989.

\bibitem[Delbaen and Schachermayer(1994)]{delbaen94arbitrage}
F.~Delbaen and W.~Schachermayer.
\newblock A general version of the fundamental theorem of asset pricing.
\newblock \emph{Mathematische Annalen}, 300:\penalty0 463--520, 1994.

\bibitem[Dellacherie and Meyer(1980)]{dellacherie80probabilites}
C.~Dellacherie and P.-A. Meyer.
\newblock \emph{Probabilit{\'e}s et Potentiel. Chapitres {V} {\`a} {VIII}: Th{\'e}orie des Martingales}.
\newblock Hermann, 1980.

\bibitem[{ECB}(2019)]{ecb19algorithmic}
{ECB}.
\newblock Algorithmic trading: trends and existing regulation, 2019.

\bibitem[{\'E}mery(1979)]{emery79topologie}
M.~{\'E}mery.
\newblock Une topologie sur l'espace des semimartingales.
\newblock \emph{S{\'e}minaire de Probabilit{\'e}s}, 13:\penalty0 260--280, 1979.

\bibitem[{ESMA}(2026)]{esma26algorithmic}
{ESMA}.
\newblock Supervisory briefing on algorithmic trading in the {EU}, 2026.

\bibitem[Ethier and Kurtz(1986)]{ethier86markov}
S.~N. Ethier and T.~G. Kurtz.
\newblock \emph{Markov Processes: Characterization and Convergence}.
\newblock John Wiley \& Sons, 1986.

\bibitem[He et~al.(1992)He, Wang, and Yan]{he92semimartingale}
S.-W. He, J.-G. Wang, and J.-A. Yan.
\newblock \emph{Semimartingale Theory and Stochastic Calculus}.
\newblock CRC Press, 1992.

\bibitem[Hornik(1991)]{hornik91approximation}
K.~Hornik.
\newblock Approximation capabilities of multilayer feedforward networks.
\newblock \emph{Neural Networks}, 4\penalty0 (2):\penalty0 251--257, 1991.

\bibitem[Hornik et~al.(1989)Hornik, Stinchcombe, and White]{hornik89approximation}
K.~Hornik, M.~Stinchcombe, and H.~White.
\newblock Multilayer feedforward networks are universal approximators.
\newblock \emph{Neural Networks}, 2\penalty0 (5):\penalty0 359--366, 1989.

\bibitem[Hunt et~al.(1992)Hunt, Sauer, and Yorke]{hunt92prevalence}
B.~R. Hunt, T.~Sauer, and J.~A. Yorke.
\newblock Prevalence: {A} translation-invariant ``almost every'' on infinite-dimensional spaces.
\newblock \emph{Bulletin of the American Mathematical Society}, 27:\penalty0 217--238, 1992.

\bibitem[Kallenberg(2021)]{kallenberg21foundations}
O.~Kallenberg.
\newblock \emph{Foundations of Modern Probability}.
\newblock Springer, 3rd edition, 2021.

\bibitem[Karatzas and Shreve(1998)]{karatzas98calculus}
I.~Karatzas and S.~E. Shreve.
\newblock \emph{Brownian Motion and Stochastic Calculus}.
\newblock Springer, 2nd edition, 1998.

\bibitem[Kratsios(2021)]{kratsios21approximation}
A.~Kratsios.
\newblock The universal approximation property.
\newblock \emph{Annals of Mathematics and Artificial Intelligence}, 89:\penalty0 435--469, 2021.

\bibitem[Krickeberg(1964)]{krickeberg64conditional}
K.~Krickeberg.
\newblock Convergence of conditional expectation operators.
\newblock \emph{Theory of Probability \& Its Applications}, 9\penalty0 (4):\penalty0 538--549, 1964.

\bibitem[Mania et~al.(2008)Mania, Tevzadze, and Toronjadze]{mania08partial}
M.~Mania, R.~Tevzadze, and T.~Toronjadze.
\newblock Mean-variance hedging under partial information.
\newblock \emph{SIAM Journal on Control and Optimization}, 47\penalty0 (5):\penalty0 2381--2409, 2008.

\bibitem[{\O}ksendal(2003)]{oksendal03equations}
B.~{\O}ksendal.
\newblock \emph{Stochastic Differential Equations: An Introduction with Applications}.
\newblock Springer, 6th edition, 2003.

\bibitem[Pestman(1995)]{pestman95measurability}
W.~R. Pestman.
\newblock Measurability of linear operators in the {S}korokhod topology.
\newblock \emph{Bulletin of the Belgian Mathematical Society - Simon Stevin}, 2\penalty0 (4):\penalty0 381--388, 1995.

\bibitem[Protter(2005)]{protter05integration}
P.~E. Protter.
\newblock \emph{Stochastic Integration and Differential Equations}.
\newblock Springer, 2nd edition, 2005.

\bibitem[Rao and Ren(1991)]{rao91orlicz}
M.~M. Rao and Z.~D. Ren.
\newblock \emph{Theory of {Orlicz} Spaces}.
\newblock Marcel Dekker, 1991.

\bibitem[Rheinl{\"a}nder and Schweizer(1997)]{rheinlaender97projections}
T.~Rheinl{\"a}nder and M.~Schweizer.
\newblock On ${L}^{2}$-projections on a space of stochastic integrals.
\newblock \emph{The Annals of Probability}, 25\penalty0 (4):\penalty0 1810--1831, 1997.

\bibitem[Rudin(1987)]{rudin87analysis}
W.~Rudin.
\newblock \emph{Real and Complex Analysis}.
\newblock McGraw-Hill, 3rd edition, 1987.

\bibitem[Rudin(1991)]{rudin91functional}
W.~Rudin.
\newblock \emph{Functional Analysis}.
\newblock McGraw-Hill, 2nd edition, 1991.

\bibitem[Schilling(2017)]{schilling17measures}
R.~L. Schilling.
\newblock \emph{Measures, Integrals and Martingales}.
\newblock Cambridge University Press, 2nd edition, 2017.

\bibitem[Schmidhuber(2015)]{schmidhuber15overview}
J.~Schmidhuber.
\newblock Deep learning in neural networks: {A}n overview.
\newblock \emph{Neural Networks}, 61:\penalty0 85--117, 2015.

\bibitem[Schmock(2024)]{schmock24stochastic}
U.~Schmock.
\newblock Lecture notes in stochastic analysis for financial and actuarial mathematics.
\newblock TU Wien, 2024.

\bibitem[Schweizer(1994)]{schweizer94restricted}
M.~Schweizer.
\newblock Risk-minimizing hedging strategies under restricted information.
\newblock \emph{Mathematical Finance}, 4\penalty0 (4):\penalty0 327--342, 1994.

\bibitem[Schweizer(2001)]{schweizer01quadratic}
M.~Schweizer.
\newblock A guided tour through quadratic hedging approaches.
\newblock In \emph{Option Pricing, Interest Rates and Risk Management}, pages 538--574. Cambridge University Press, 2001.

\bibitem[Shiryaev(2008)]{shiryaev08stopping}
A.~N. Shiryaev.
\newblock \emph{Optimal Stopping Rules}.
\newblock Springer, 2008.

\bibitem[{Staff of the SEC}(2020)]{sec20algorithmic}
{Staff of the SEC}.
\newblock Staff report on algorithmic trading in {U.S.} capital markets, 2020.

\bibitem[{Staffs of the CFTC and SEC}(2010)]{cftcsec10market}
{Staffs of the CFTC and SEC}.
\newblock Findings regarding the market events of {May 6, 2010}, 2010.

\bibitem[Stricker(1990)]{stricker90arbitrage}
C.~Stricker.
\newblock Arbitrage et lois de martingale.
\newblock \emph{Annales de l'I.H.P. Probabilit{\'e}s et statistiques}, 26\penalty0 (3):\penalty0 451--460, 1990.

\bibitem[{The Royal Swedish Academy of Sciences}(2024)]{nobel24physics}
{The Royal Swedish Academy of Sciences}.
\newblock The {Nobel Prize in Physics} 2024: They trained artificial neural networks using physics, 2024.

\bibitem[Unser(2023)]{unser23ridges}
M.~Unser.
\newblock Ridges, neural networks, and the {Radon} transform.
\newblock \emph{Journal of Machine Learning Research}, 24:\penalty0 1--33, 2023.

\bibitem[Yan(1980)]{yan80caracterisation}
J.-A. Yan.
\newblock Caract{\'e}risation d'une classe d'ensembles convexes de {$L^1$} ou {$H^1$}.
\newblock \emph{S{\'e}minaire de Probabilit{\'e}s}, 14:\penalty0 220--222, 1980.

\end{thebibliography}

\end{document}